\documentclass[11pt,reqno]{amsart}

\usepackage{amsmath,amssymb,amsthm,mathrsfs}
\usepackage{array}
\usepackage[margin=1.15in]{geometry}
\usepackage{xcolor}
\usepackage[colorlinks=true,linkcolor=blue,citecolor=blue,urlcolor=blue]{hyperref}
\usepackage{pgffor}
\usepackage{tikz}

\definecolor{revcolor}{RGB}{0,125,0}
\definecolor{chkcolor}{RGB}{205,0,0}
\newif\ifshowchanges
\showchangestrue
\ifshowchanges

  \newenvironment{revblock}{\color{revcolor}}{}
  \newcommand{\authornote}[1]{{\color{chkcolor}[{Note for the authors:} #1]}}
\else

  \newcommand{\authornote}[1]{\ignorespaces}
\fi
\newif\ifmentionchen
\mentionchentrue

\theoremstyle{plain}
\newtheorem{theorem}{Theorem}[section]

\newtheorem{lemma}[theorem]{Lemma}
\newtheorem{corollary}[theorem]{Corollary}

\theoremstyle{definition}
\newtheorem{remark}[theorem]{Remark}
\newtheorem{example}[theorem]{Example}
\newtheorem{problem}[theorem]{Problem}

\numberwithin{equation}{section}

\newcommand{\R}{\mathbb{R}}

\newcommand{\Sph}{\mathbb{S}}
\newcommand{\dH}{\dot H}
\newcommand{\norm}[1]{\left\Vert #1\right\Vert}
\newcommand{\abs}[1]{\left\vert #1\right\vert}
\newcommand{\dd}{\,d}
\DeclareMathOperator{\Real}{Re}
\DeclareRobustCommand{\orcidicon}{%
  \begin{tikzpicture}
    \draw[lime, fill=lime] (0,0)
      circle [radius=0.16]
      node[white] {{\fontfamily{qag}\selectfont \tiny ID}};
    \draw[white, fill=white] (-0.0625,0.095) circle [radius=0.007];
  \end{tikzpicture}\hspace{-2mm}}
\foreach \x in {A, B}{%
  \expandafter\xdef\csname orcid\x\endcsname{%
    \noexpand\href{https://orcid.org/\csname orcidauthor\x\endcsname}{\noexpand\orcidicon}}}

\begin{document}

\title[Exact $L^2$-growth profiles for the wave equation]
{Exact $L^2$-growth profiles for the free wave equation:\\
sharp constants, critical regularity, and local energy decay}

\author[V.~Kumar and M.~Majdoub]{Vishvesh Kumar\orcidA{} and Mohamed Majdoub\orcidB{}}

\address[V.~Kumar]{
Department of Mathematical Sciences, Indian Institute of Technology (BHU),
Varanasi, Uttar Pradesh, 221005, India.}
\email{\tt vishvesh.mat@iitbhu.ac.in}
\email{\tt vishveshmishra@gmail.com}

\address[M.~Majdoub]{Department of Mathematics, College of Science, Imam Abdulrahman Bin Faisal University, P.O.~Box 1982, Dammam, Saudi Arabia.\newline
Basic and Applied Scientific Research Center, Imam Abdulrahman Bin Faisal University, P.O.~Box 1982, 31441, Dammam, Saudi Arabia.}
\email{\tt mmajdoub@iau.edu.sa}
\email{\tt  med.majdoub@gmail.com}
\email{\tt mohamed.majdoub@fst.rnu.tn}
\subjclass[2020]{35L05, 35B40, {35L15}, 42B10}
\keywords{Wave equation; infinite time blowup; sharp constants; low frequencies;
homogeneous Sobolev spaces of negative order; local energy decay}
\begin{abstract}
We study the large-time behavior of the $L^2$-norm of solutions to the free wave equation on $\mathbb{R}^N$ and, more generally, their homogeneous Sobolev norms of negative order. We refine and extend the results of R.~Ikehata [J.~Hyperbolic Differ.\ Equ.\ 20 (2023)]{, who identified and quantified the} infinite-time $L^2$-growth {of free waves} in dimensions $N=1,2$. For finite-energy data with $u_1\in L^1(\mathbb{R}^N)$, we derive exact first-order asymptotics: when $P=\int_{\mathbb{R}^N}u_1(x)\,dx\neq0$, the squared $L^2$-norm grows linearly for $N=1$ and logarithmically for $N=2$, with a leading coefficient depending only on $P$. In particular, no weighted moment assumption on $u_1$ is required; $u_1\in L^1\cap L^2$ suffices.

More generally, we prove that the $L^2$-norm is bounded, and hence convergent, if and only if $u_1\in\dot H^{-1}(\mathbb{R}^N)$. We further establish first-order asymptotic profiles in negative-order homogeneous Sobolev spaces, covering polynomial growth, logarithmic growth, and convergence to a finite limit according to the Sobolev index. These results reveal a unified low-frequency mechanism underlying the low-dimensional $L^2$-growth.

We also derive second-order asymptotic expansions in dimension one and at the critical Sobolev index in dimensions $N\ge2$. Finally, in dimension two, the exact $L^2$-asymptotics yield a sharper local energy decay estimate with an explicit upper asymptotic constant independent of the localization radius.
\end{abstract}
\maketitle

\section{Introduction}\label{sec:intro}

We consider the Cauchy problem for the free wave equation
\begin{equation}\label{eq:wave}
u_{tt}-\Delta u=0, \qquad (t,x)\in (0,\infty)\times\R^N,
\end{equation}
with initial conditions
\begin{equation}\label{eq:data}
u(0,x)=u_0(x),\qquad u_t(0,x)=u_1(x), \qquad x\in\R^N,
\end{equation}
where $[u_0,u_1]\in H^1(\R^N)\times L^2(\R^N)$ are real-valued functions. It is well known (see, for example, \cite{Racke1992,Ikawa2000}) that the Cauchy problem \eqref{eq:wave}--\eqref{eq:data} admits a unique weak solution
\begin{equation*}
u\in C\bigl([0,\infty);H^1(\R^N)\bigr) \cap C^1\bigl([0,\infty);L^2(\R^N)\bigr).
\end{equation*}
Moreover, the energy
\begin{equation}\label{eq:energy}
E(t) :=\frac12\Bigl( \|u_t(t,\cdot)\|_{L^2(\R^N)}^2 +\|\nabla u(t,\cdot)\|_{L^2(\R^N)}^2 \Bigr)
\end{equation}
is conserved along the flow, that is, $E(t)=E(0)$ for every $t\ge0$. Throughout the paper, $\|\cdot\|$ denotes the norm of $L^2(\R^N)$.

Energy conservation controls $u_t$ and $\nabla u$, but it gives no direct control on the $L^2$-norm of the solution itself. The behaviour of
$$
M(t):=\norm{u(t,\cdot)}
$$
is therefore a separate question. Unlike the energy, $M(t)$ is not conserved, and{, as discovered by Ikehata \cite{Ikehata2023}, in dimensions $N=1$ and $N=2$ it need not remain bounded.}

{The quantity $M(t)$ is not a mere curiosity. Estimates} for $M(t)$ arise naturally in the multiplier method for local energy decay, initiated by Morawetz \cite{Morawetz1961} and developed since then in various directions; see, for example, \cite{Ikehata2004a,Ikehata2005,IkeharaNishihara2005,IkehataSobukawa2007,AikawaIkehata2010,CharaoIkehata2020,Ikehata2019}.
To explain this connection, let
$$
E_R(t):=\frac12\int_{|x|\leq R} \left( |u_t(t,x)|^2+|\nabla u(t,x)|^2 \right)\,\dd x
$$
denote the local energy in the ball of radius $R>0$. A suitable multiplier argument typically leads to an estimate of the form
$$
tE_R(t)\lesssim K_0+|F(t)|,
$$
where $K_0$ depends only on the initial data and
$$
F(t):=\int_{\R^N}u_t(t,x)u(t,x)\,\dd x.
$$
The mixed term $F(t)$ involves both the velocity $u_t$ and the solution $u$. By the Cauchy--Schwarz inequality, we get
$$
|F(t)| \leq \norm{u_t(t,\cdot)}\norm{u(t,\cdot)} = \norm{u_t(t,\cdot)}M(t).
$$
Since $\norm{u_t(t,\cdot)}^2\leq 2E(t)=2E(0)$, we deduce that $|F(t)|\leq\sqrt{2E(0)}\,M(t)$, and therefore
$$
E_R(t) \lesssim \frac{K_0}{t} + \frac{\sqrt{2E(0)}}{t}\,M(t).
$$
Thus the decay rate of the local energy is directly influenced by the large-time behaviour of the global quantity $M(t)$, and any improvement in the understanding of $M$ yields an improvement of the local energy decay rate. {This connection is the starting point of \cite{Ikehata2023}. There, Ikehata proposed to study the large-time behaviour of the $L^2$-norm of the solution itself, proved that $M(t)$ blows up as $t\to\infty$ in dimensions one and two, with sharp rates, as soon as the total mass of the initial velocity is nonzero, and used this information to derive new local energy decay estimates; see Subsection~\ref{ss:ikehata}.} {To the best of our knowledge, the infinite-time $L^2$-growth of free waves in the whole space had not been explicitly identified and quantified before \cite{Ikehata2023}; see \cite{IkehataMatsuyama2002} for an earlier study of the $L^2$-behaviour of solutions in exterior domains.} {All the questions addressed in the present paper originate from \cite{Ikehata2023}.} The same quantity plays a central role for {damped,} strongly damped and structurally damped wave equations. There, the $L^2$-behaviour of the solution encodes the competition between the parabolic and the hyperbolic parts of the evolution; see \cite{Ikehata2014,IkehataOnodera2017,Ikehata2004} {and \cite{CharaoIkehata2022log}}.

\subsection{Ikehata's estimates}\label{ss:ikehata}

In \cite{Ikehata2023}, Ikehata {initiated the study of the large-time behaviour of $M(t)$ for the free wave equation}. He derived upper and lower bounds of the same order for the $L^2$-norm of solutions in dimensions $N=1$ and $N=2$. To state his result, we define, for $\gamma\geq0$, the weighted space
$$
L^{1,\gamma}(\R^N) := \left\{ f\in L^1(\R^N) : \norm{f}_{1,\gamma} := \int_{\R^N} \left(1+\abs{x}^{\gamma}\right)\abs{f(x)}\,\dd x <\infty \right\},
$$
and we set
$$
P:=\int_{\R^N}u_1(x)\,\dd x.
$$
The number $P$ is the zeroth moment, or total mass, of the initial velocity $u_1$. With the normalization of the Fourier transform used below, $P=\widehat{u}_1(0)$, so that $P$ measures the zero-frequency component of the initial velocity.

Let $[u_0,u_1]\in H^1(\R^N)\times L^2(\R^N)$ with $u_0,u_1\in L^1(\R^N)$, and set
$$
I_{0,N}:=\norm{u_0}+\norm{u_0}_{L^1(\R^N)}+\norm{u_1}+\norm{u_1}_{L^1(\R^N)}.
$$
For $N=1$, Ikehata proved \cite[Theorem~1.1]{Ikehata2023} that there exist constants $C_1,C_2>0$ such that
\begin{equation}\label{eq:ike1}
C_2\abs{P}\sqrt{t}\ \le\ M(t)\ \le\ C_1 I_{0,1}\sqrt{t},\qquad t\gg1,
\end{equation}
where the upper bound holds under the above assumptions, while the lower bound requires in addition $u_1\in L^{1,1}(\R)$. For $N=2$, under the same assumptions with $\R$ replaced by $\R^2$, he proved \cite[Theorem~1.2]{Ikehata2023} that
\begin{equation}\label{eq:ike2}
C_2\abs{P}\sqrt{\log t}\ \le\ M(t)\ \le\ C_1 I_{0,2}\sqrt{\log t},
\qquad t\gg1.
\end{equation}

In particular, as soon as the total mass $P$ of the initial velocity is nonzero, the solution blows up in $L^2$ as $t\to\infty$, at rate $\sqrt t$ in dimension one and $\sqrt{\log t}$ in dimension two. For $N\ge3$ {and data in $L^1\cap L^2$}, no such phenomenon occurs.

As an elementary special case, once the phenomenon discovered in \cite{Ikehata2023} is known, its one-dimensional version can be recovered, for compactly supported data, by an elementary physical-space computation. We include it because it makes the mechanism transparent. It should be regarded as an alternative simple proof of a special case of \cite[Theorem~1.1]{Ikehata2023}, found with the benefit of hindsight, and not as an earlier source of that result. The d'Alembert formula itself is classical (see, e.g., \cite[Section~2.4]{Evans2010}), but, as far as we know, the large-time behaviour of the $L^2$-norm of the solution is not discussed in the standard textbooks, including \cite{Evans2010}.

Assume that $N=1$, $u_0\equiv0$ and $u_1\in L^1(\R)\cap L^2(\R)$. Then
\begin{equation}\label{eq:dalembert}
u(t,x) =\frac12\int_{x-t}^{x+t}u_1(r)\,\dd r \longrightarrow \frac{P}{2} \qquad\text{as }t\to\infty,
\end{equation}
for every fixed $x\in\R$, where $P=\int_{\R}u_1(r)\,\dd r$ is the total mass introduced above. Hence, when $P\neq0$, the solution approaches the nonzero constant $P/2$ on a spatial region whose length grows linearly in time. If, in addition, $u_1$ vanishes outside $[-a,a]$ for some $a>0$, then, for $t\ge a$, we have $u(t,x)=P/2$ for $\abs{x}\le t-a$, $u(t,x)=0$ for $\abs{x}\ge t+a$, and $\abs{u(t,x)}\le\frac12\norm{u_1}_{L^1(\R)}$ on the two remaining intervals, each of length $2a$. Consequently,
$$
M(t)^2=\norm{u(t,\cdot)}^2 =\frac{P^2}{2}\,t+O(1) \qquad\text{as }t\to\infty,
$$
and therefore
$$
M(t)\sim\frac{|P|}{\sqrt{2}}\,t^{1/2}.
$$
This recovers \eqref{eq:ike1}, with the optimal constant in the leading term, but only for compactly supported velocities. For a general $u_1\in L^1(\R)\cap L^2(\R)$, the plateau is never reached exactly and the remainder need not be bounded; for the velocity of Example~\ref{ex:cauchy} below, the second-order term is $-\pi\log t$. In dimension two, the corresponding elementary argument, based on the conservation law \eqref{eq:massconservation} below, gives a positive lower bound for $M(t)$ but no growth at all. In both cases, the general result requires a genuinely different argument, such as the one of \cite{Ikehata2023} or the low-frequency analysis of Subsection~\ref{ss:mechanism}.

The growth of $M(t)$ is related to the failure of the strong Huygens principle in dimension $N=1$ and in even space dimensions, where the solution generally develops a tail inside the light cone. In low dimensions, the low-frequency component carried by the spatial mean of $u_1$ decays too slowly to allow a uniform-in-time $L^2$-bound. There is no contradiction with energy conservation. Indeed, the energy controls only $u_t$ and $\nabla u$, and a slowly varying component may have a large $L^2$-norm while contributing little to these two quantities.

Estimates \eqref{eq:ike1}--\eqref{eq:ike2} are sharp {in the rate}. They are not sharp in the constants, however. The upper and lower bounds differ by an unspecified factor and, more strikingly, they are of a different nature. The lower bound depends on the data only through the single number $\abs{P}$. In contrast, the upper bound involves the whole quantity $I_{0,N}$, {which contains the norms of $u_0$ and the full $L^1$-norm of $u_1$. Since $\abs{P}\le\norm{u_1}_{L^1}$, it may be much larger than $\abs{P}$.} One is left wondering which of the two bounds reflects the true behavior. This gap is the starting point of the present {paper}.

\subsection{Three questions}\label{ss:questions}
The preceding discussion naturally leads to three questions.

\begin{problem}\label{pb:1}
Does $M(t)^2/t$ (resp.\ $M(t)^2/\log t$) converge as $t\to\infty$ when $N=1$ (resp.\ $N=2$)? If so, what are the limits? In particular, are they determined by the zeroth moment $P=\int_{\R^N}u_1(x)\,\dd x$ alone, or do they depend on further properties of the initial data, such as the quantity $I_{0,N}$?
\end{problem}

\begin{problem}\label{pb:2}
What is the precise criterion separating growth from boundedness of $M(t)$? For $N\leq2$, the condition $P\neq0$ is sufficient for the unbounded growth of $M(t)$. It cannot, however, be the fundamental obstruction, since $P$ is not defined for a general initial velocity $u_1\in L^2(\R^N)$, whereas $M(t)$ is well defined for every finite-energy solution. Is there a condition involving $u_1$ only, formulated in a space that behaves naturally under translations and dilations, which is both necessary and sufficient for the boundedness of $M(t)$?
\end{problem}

\begin{problem}\label{pb:3}
Is infinite-time $L^2$-blowup a genuinely low-dimensional phenomenon? Or is it merely the trace, in the particular scale $L^2=\dH^0$, of a mechanism that is present in every dimension and becomes visible once the right space is chosen?
\end{problem}

The answers to these questions turn out to be remarkably clean. Moreover, all three are governed by the same low-frequency mechanism, which we now describe.

\subsection{The mechanism}\label{ss:mechanism}

Taking the spatial Fourier transform of \eqref{eq:wave}--\eqref{eq:data}, we find
\begin{equation}\label{eq:fourier-solution-intro}
\hat u(t,\xi)=\frac{\sin(t\abs{\xi})}{\abs{\xi}}\,\hat u_1(\xi)
+\cos(t\abs{\xi})\,\hat u_0(\xi).
\end{equation}

The two terms in \eqref{eq:fourier-solution-intro} play very different roles. Since $\abs{\cos(t\abs{\xi})}\leq1$, Plancherel's theorem gives $\norm{\cos(t\abs{D})u_0} \leq \norm{u_0}$. Thus the contribution of the initial position $u_0$ remains uniformly bounded in $L^2$, and it cannot by itself produce any growth of $M(t)$. In particular, it is negligible at the leading orders $\sqrt{t}$ in dimension one and $\sqrt{\log t}$ in dimension two.

The situation is different for the term generated by the initial velocity, whose Fourier multiplier is $\sin(t\abs{\xi})/\abs{\xi}$. This multiplier is bounded for each fixed $t$, but its bound grows with $t$, since
$$
\left| \frac{\sin(t\abs{\xi})}{\abs{\xi}} \right| \leq t.
$$
Moreover, near $\xi=0$ we have
$$
\frac{\sin(t\abs{\xi})}{\abs{\xi}} \sim t \qquad\text{when }t\abs{\xi}\ll1.
$$
It is therefore the low-frequency part of the initial velocity that decides whether $M(t)$ remains bounded or grows.

Assume for the moment that $u_1\in L^1(\R^N)$. Then $\hat u_1$ is continuous and, with the Fourier convention used here, $\hat u_1(0) = \int_{\R^N}u_1(x)\,\dd x = P$. Consequently, $\hat u_1(\xi)=P+o(1)$ as $\xi\to0$. If $P\neq0$, it follows that, near the origin,
$$
\frac{\abs{\hat u_1(\xi)}^2}{\abs{\xi}^2} \sim \frac{\abs{P}^2}{\abs{\xi}^2}.
$$
Passing to polar coordinates, the corresponding singular integral behaves like $\int_0^1 r^{N-3}\,\dd r$, which is finite if and only if $N\geq3$. Thus the dimensional restriction in \eqref{eq:ike1}--\eqref{eq:ike2} comes entirely from the interaction between the singular factor $\abs{\xi}^{-2}$ and the $N$-dimensional volume element $r^{N-1}\,\dd r$.

More precisely, the leading part of the squared $L^2$-norm generated by $u_1$ is governed by
$$
\abs{P}^2 \int_{\abs{\xi}\leq1} \frac{\sin^2(t\abs{\xi})}{\abs{\xi}^2}\,\dd\xi.
$$
In dimension $N=1$, up to the factor $2$ coming from the two directions of the real line, this reduces to $\int_0^1\sin^2(tr)\,r^{-2}\,\dd r$. The change of variables $s=tr$ gives
$$
\int_0^1\frac{\sin^2(tr)}{r^2}\,\dd r = t\int_0^t\frac{\sin^2s}{s^2}\,\dd s.
$$
Since $\int_0^\infty s^{-2}\sin^2s\,\dd s<\infty$, the integral is asymptotic to a constant multiple of $t$, and we obtain $M(t)^2\sim {c_1}\abs{P}^2t$. In this case, the rescaling $\xi=\eta/t$ turns the leading expression into a fixed integral, so that the dominant frequencies are of order $t^{-1}$.

In dimension $N=2$, polar coordinates lead instead to $\int_0^1\sin^2(tr)\,r^{-1}\,\dd r$. After the same change of variables, we get
$$
\int_0^1\frac{\sin^2(tr)}{r}\,\dd r = \int_0^t\frac{\sin^2s}{s}\,\dd s,
$$
and the last integral grows like $\tfrac12\log t$. Unlike the one-dimensional case, the logarithmic growth is not concentrated at the frequency scale $r\sim t^{-1}$. It accumulates over the whole intermediate region $t^{-1}\lesssim r\lesssim1$. Indeed, on this region the factor $\sin^2(tr)$ oscillates with average $\tfrac12$, while the measure $\dd r/r$ gives the same weight to each dyadic frequency scale. This produces $M(t)^2\sim {c_2}\abs{P}^2\log t$. The exact constants {$c_1$ and $c_2$} depend on the normalization of the Fourier transform, and they are identified in Section~\ref{sec:profiles}.

It is instructive to compare this Fourier-space mechanism with the physical-space picture. In dimension one, the d'Alembert formula {\eqref{eq:dalembert}} shows that a localized initial velocity with total mass $P$ produces an expanding plateau of height $P/2$. At leading order, $u(t,x)\approx\frac{P}{2}\mathbf 1_{\{\abs{x}<t\}}$, and consequently
\begin{equation}\label{eq:heur1}
M(t)^2 \approx \frac{P^2}{4}\cdot 2t = \frac{P^2}{2}\,t.
\end{equation}
Thus the one-dimensional growth is produced by a profile of essentially constant height whose support expands linearly in time.

In dimension two, {Poisson's formula \cite[Section~2.4]{Evans2010}} shows that the solution has, for $\abs{x}<t$, the tail $u(t,x)\approx \tfrac{P}{2\pi}(t^2-\abs{x}^2)^{-1/2}$. We infer that
\begin{equation}\label{eq:heur2}
M(t)^2\approx\int_{\abs{x}<t-\delta}\frac{P^2}{4\pi^2}
\frac{\dd x}{t^2-\abs{x}^2}
=\frac{P^2}{4\pi}\log\frac{t}{2\delta}+O(1)
\approx\frac{P^2}{4\pi}\log t .
\end{equation}
Both heuristics are confirmed exactly by Corollary~\ref{cor:L2} below. A third, purely algebraic, consistency check is available. {It relies on an averaging device that is classical in the study of finite-time blowup for semilinear wave equations $u_{tt}-\Delta u=\abs{u}^p$, and goes back to Kato \cite{Kato1980}, Glassey \cite{Glassey1981} and Sideris \cite{Sideris1984}. In that setting, one considers the spatial mean $\int_{\R^N}u(t,x)\dd x$, whose second time derivative equals $\int_{\R^N}\abs{u}^p\dd x$, and one bounds the latter from below by H\"older's inequality on the support of $u(t,\cdot)$, which is controlled by finite speed of propagation. For the free equation, the second derivative of the spatial mean vanishes.} Integrating \eqref{eq:wave} formally in $x$, we get $\frac{d}{dt}\int u\dd x=\int u_t\dd x$ and $\frac{d^2}{dt^2}\int u\dd x=\int\Delta u\dd x=0$, so that
\begin{equation}\label{eq:massconservation}
\int_{\R^N}u(t,x)\dd x=Q+Pt,\qquad Q:=\int_{\R^N}u_0(x)\dd x
\end{equation}
{(this is rigorous, for instance, for compactly supported data, by finite speed of propagation)}. In dimension one, {if the data are supported in $[-a,a]$, then $u(t,\cdot)$ is supported in $[-a-t,a+t]$, and the Cauchy--Schwarz inequality yields $\abs{Q+Pt}\le\sqrt{2(t+a)}\,M(t)$. It follows that} $M(t)^2\ge\tfrac{P^2}{2}t(1+o(1))$. Thus the constant $P^2/2$ in \eqref{eq:heur1} is exactly the Cauchy--Schwarz value. {Moreover, equality in the Cauchy--Schwarz inequality holds only for functions that are constant on their support, and the asymptotic profile is precisely such a flat plateau.} The growth is a spreading effect, not a concentration effect.

In dimension $N$, the same argument, with $u(t,\cdot)$ supported in the ball $\{\abs{x}\le t+a\}$, gives $\abs{Q+Pt}\le\abs{B_1}^{1/2}(t+a)^{N/2}M(t)$, where $\abs{B_1}$ denotes the volume of the unit ball, and hence
$$
M(t)\ \ge\ \frac{\abs{P}}{\abs{B_1}^{1/2}}\,t^{1-N/2}\bigl(1+o(1)\bigr)\qquad\text{as }t\to\infty .
$$
For $N=2$, this yields only $\liminf_{t\to\infty}M(t)\ge\abs{P}/\sqrt\pi$, and not the logarithmic growth. In two dimensions, the solution is far from constant on its support, and the growth is produced by the tail in \eqref{eq:heur2} rather than by a plateau, so that the Cauchy--Schwarz inequality is very far from equality. {We are not aware of an earlier explicit use of \eqref{eq:massconservation} to bound the $L^2$-norm of free waves from below.} In any case, this argument applies only to compactly supported data, provides only a lower bound, and does not capture the two-dimensional growth. It is therefore no substitute for the analysis of \cite{Ikehata2023} or for the Fourier analysis of the present paper, and we use it only as a consistency check.

Finally, it is worth contrasting this with the heat semigroup. There, the same low-frequency analysis of $\hat u_1(0)=P$ gives $\norm{e^{t\Delta}u_1}\sim c_N\abs{P}\,t^{-N/4}$, that is, {decay}. The difference lies entirely in the multiplier. The heat multiplier $e^{-t\abs{\xi}^2}$ is bounded by $1$ and localizes the solution at low frequencies, {whereas the size of $\sin(t\abs{\xi})/\abs{\xi}$ near $\xi=0$ grows like $t$}. In this sense, the wave equation amplifies exactly what the heat equation dissipates.

\subsection{Main results}\label{ss:results}

\medskip
\noindent\textbf{Notation.} We normalize the Fourier transform by
$$
\hat f(\xi) = \mathcal F_{x\to\xi}(f)(\xi) := \int_{\R^N}e^{-ix\cdot\xi}f(x)\,\dd x,
$$
so that Plancherel's identity reads
$$
\norm{f}_{L^2(\R^N)}^2 = \frac{1}{(2\pi)^N} \norm{\hat f}_{L^2(\R^N)}^2.
$$
For $s\in\R$, we define
$$
\norm{f}_{\dH^{-s}(\R^N)}^2 := \frac{1}{(2\pi)^N} \int_{\R^N} \abs{\xi}^{-2s}\abs{\hat f(\xi)}^2\,\dd\xi = \norm{\abs{\nabla}^{-s}f}_{L^2(\R^N)}^2 \in[0,+\infty],
$$
and we write $f\in\dH^{-s}(\R^N)$ when this quantity is finite {(see \cite[Chapter~1]{BCD2011} for homogeneous Sobolev spaces)}. We also set
$$
\sigma_{N-1} := \abs{\Sph^{N-1}} = \frac{2\pi^{N/2}}{\Gamma(N/2)},
$$
with the convention $\sigma_0=2$. Finally, $\gamma$ denotes the Euler--Mascheroni constant, and $C>0$ denotes a harmless constant whose value may change from line to line.
{Several results below only require $u_0\in L^2(\R^N)$. For $[u_0,u_1]\in L^2(\R^N)\times L^2(\R^N)$, the solution of \eqref{eq:wave}--\eqref{eq:data} is understood as the function defined by the Fourier representation \eqref{eq:fourier-solution} below. It belongs to $C([0,\infty);L^2(\R^N))$ and coincides with the weak solution described above whenever $[u_0,u_1]\in H^1(\R^N)\times L^2(\R^N)$.}
\medskip

Our first result gives a complete answer to Problem~\ref{pb:2}. It requires no spatial integrability assumption beyond the energy space and, in particular, makes no reference to the zeroth moment $P$.

\begin{theorem}[Boundedness dichotomy]\label{th:dichotomy}
Let $N\geq1$, let $[u_0,u_1]\in H^1(\R^N)\times L^2(\R^N)$, and let $u$ be the solution of \eqref{eq:wave}--\eqref{eq:data}. Then
$$
\sup_{t\geq0}\norm{u(t,\cdot)}<\infty \qquad\Longleftrightarrow\qquad u_1\in\dH^{-1}(\R^N).
$$
If $u_1\in\dH^{-1}(\R^N)$, then
\begin{equation}\label{eq:equipartition}
\lim_{t\to\infty}\norm{u(t,\cdot)}^2 = \frac12 \left( \norm{u_1}_{\dH^{-1}}^2+\norm{u_0}^2 \right).
\end{equation}
If $u_1\notin\dH^{-1}(\R^N)$, then
$$
\norm{u(t,\cdot)}\longrightarrow+\infty \qquad\text{as }t\to\infty.
$$
\end{theorem}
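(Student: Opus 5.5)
The plan is to work entirely on the Fourier side, using \eqref{eq:fourier-solution-intro} and Plancherel. Expanding the square gives
\begin{align*}
(2\pi)^N\norm{u(t,\cdot)}^2 &= \int_{\R^N}\frac{\sin^2(t\abs{\xi})}{\abs{\xi}^2}\abs{\hat u_1(\xi)}^2\dd\xi + \int_{\R^N}\cos^2(t\abs{\xi})\abs{\hat u_0(\xi)}^2\dd\xi\\
&\quad + \int_{\R^N}\frac{\sin(2t\abs{\xi})}{\abs{\xi}}\Real\bigl(\hat u_1(\xi)\overline{\hat u_0(\xi)}\bigr)\dd\xi .
\end{align*}
Write $\sin^2=\tfrac12(1-\cos(2t\abs{\xi}))$ and $\cos^2=\tfrac12(1+\cos(2t\abs{\xi}))$. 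Then the first two terms become
$$\tfrac12\int\abs{\xi}^{-2}\abs{\hat u_1}^2 + \tfrac12\int\abs{\hat u_0}^2,$$
plus oscillatory integrals $\mp\tfrac12\int\cos(2t\abs{\xi})\,g(\xi)\dd\xi$ with $g=\abs{\xi}^{-2}\abs{\hat u_1}^2$ or $g=\abs{\hat u_0}^2$. The key tool is a Riemann--Lebesgue lemma for radial oscillations. If $g\in L^1(\R^N)$, then $\int\cos(2t\abs{\xi})g(\xi)\dd\xi\to0$. This follows by passing to polar coordinates: the function $r\mapsto\int_{\Sph^{N-1}}g(r\omega)r^{N-1}\dd\omega$ lies in $L^1(0,\infty)$ by Fubini, and the one-dimensional Riemann--Lebesgue lemma applies. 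The same argument handles the cross term, since $\abs{\xi}^{-1}\abs{\hat u_1}\abs{\hat u_0}\in L^1$ by Cauchy--Schwarz when $u_1\in\dH^{-1}$ and $u_0\in L^2$. So if $u_1\in\dH^{-1}$, all oscillatory pieces vanish in the limit, and dividing by $(2\pi)^N$ gives \eqref{eq:equipartition}. In particular $\norm{u(t,\cdot)}$ converges, so it is bounded.

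For the converse, suppose $u_1\notin\dH^{-1}$. Since $u_1\in L^2$, the divergence of $\int\abs{\xi}^{-2}\abs{\hat u_1}^2$ comes from $\abs{\xi}\le1$. The $u_0$-part is harmless, because $\norm{\cos(t\abs{D})u_0}\le\norm{u_0}$. So it suffices to show $\norm{\frac{\sin(t\abs{D})}{\abs{D}}u_1}\to\infty$, and the triangle inequality then gives $\norm{u(t)}\to\infty$.

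The plan for the growth is to localize to the annulus $A_\epsilon=\{\epsilon\le\abs{\xi}\le1\}$. On $A_\epsilon$ the function $\abs{\xi}^{-2}\abs{\hat u_1}^2$ is integrable, so the radial Riemann--Lebesgue argument above gives
$$\liminf_{t\to\infty}\int\frac{\sin^2(t\abs{\xi})}{\abs{\xi}^2}\abs{\hat u_1}^2\dd\xi\ \ge\ \lim_{t\to\infty}\int_{A_\epsilon}\frac{\sin^2(t\abs{\xi})}{\abs{\xi}^2}\abs{\hat u_1}^2\dd\xi=\frac12\int_{A_\epsilon}\frac{\abs{\hat u_1}^2}{\abs{\xi}^2}\dd\xi .$$
Letting $\epsilon\to0$ and using monotone convergence, the right-hand side tends to $+\infty$. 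This gives divergence to $+\infty$, not merely unboundedness, and proves both the equivalence and the last assertion.

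The main obstacle is the radial Riemann--Lebesgue step, and in particular making sure the cross term is controlled. In the bounded case the cross term is fine by the integrability above. In the unbounded case I avoid it entirely by using the triangle inequality with the uniformly bounded $u_0$-contribution. The remaining care is the polar-coordinate reduction: Fubini must give $r\mapsto r^{N-1}\int_{\Sph^{N-1}}g(r\omega)\dd\omega\in L^1(0,\infty)$, which holds for every $N\ge1$, with the convention $\sigma_0=2$ covering $N=1$.
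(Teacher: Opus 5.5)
Your proof is correct and follows the paper's argument essentially step for step. Sufficiency uses the radial Riemann--Lebesgue lemma via polar coordinates. For necessity, your $L^2$ triangle inequality plays the role of the paper's pointwise inequality $\abs{a+b}^2\ge\frac12\abs a^2-\abs b^2$, and your annuli $\{\epsilon\le\abs{\xi}\le1\}$ replace the paper's truncation $\min\{\abs{\xi}^{-2}\abs{\hat u_1}^2,m\}\mathbf 1_{\{\abs{\xi}\le m\}}$ before monotone convergence. The one point to tidy is that convergence as $t\to\infty$ alone does not give $\sup_{t\ge0}$; the paper closes this with the direct bound $\norm{u(t,\cdot)}\le\norm{u_1}_{\dH^{-1}}+\norm{u_0}$, which follows from $\abs{\sin},\abs{\cos}\le1$ (continuity in $t$ would also do).
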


Theorem~\ref{th:dichotomy} shows that the boundedness of $M(t)$ is governed neither directly by the space dimension nor by a moment condition. The exact criterion is $u_1\in\dH^{-1}(\R^N)$, which involves only the initial velocity. This condition is invariant under spatial translations and behaves naturally under scaling. Indeed, if $f_\lambda(x):=f(\lambda x)$, then
$$
\norm{f_\lambda}_{\dH^{-1}} = \lambda^{-N/2-1}\norm{f}_{\dH^{-1}}.
$$

The connection with the zeroth moment appears under the additional assumption $u_1\in L^1(\R^N)$. In that case, $\hat u_1$ is continuous and $\hat u_1(0) = P$. If $P\neq0$, then $\abs{\hat u_1(\xi)}$ is bounded away from zero in a neighborhood of $\xi=0$. Consequently, the low-frequency part of the $\dH^{-1}$-norm behaves like $\int_{\abs{\xi}\leq1}\abs{\xi}^{-2}\,\dd\xi=\sigma_{N-1}\int_0^1r^{N-3}\,\dd r$, which diverges precisely when $N\leq2$. We conclude that, for $u_1\in L^1(\R^N)\cap L^2(\R^N)$ with $P\neq0$,
$$
u_1\notin\dH^{-1}(\R^N) \qquad\Longleftrightarrow\qquad N\leq2.
$$
This recovers and explains the dimensional restriction in \cite{Ikehata2023}.

Formula \eqref{eq:equipartition} is an asymptotic averaging identity. By the Fourier representation \eqref{eq:fourier-solution-intro}, when $u_1\in\dH^{-1}(\R^N)$ the squared amplitudes of both terms are integrable in $\xi$. The oscillatory factors satisfy
$$
\sin^2(t\abs{\xi}) = \frac12-\frac12\cos(2t\abs{\xi}),
\qquad
\cos^2(t\abs{\xi}) = \frac12+\frac12\cos(2t\abs{\xi}).
$$
After integration in $\xi$, the oscillatory parts vanish as $t\to\infty$, and each squared component contributes one half of its non-oscillatory mass. The mixed term vanishes for the same reason, and we obtain \eqref{eq:equipartition}.

In other words, \eqref{eq:equipartition} is an equipartition statement one derivative below the energy level. A direct computation in Fourier variables shows that, when $u_1\in\dH^{-1}(\R^N)$, the quantity $\norm{u(t,\cdot)}^2+\norm{u_t(t,\cdot)}_{\dH^{-1}}^2$ is conserved and equals $\norm{u_0}^2+\norm{u_1}_{\dH^{-1}}^2$. Formula \eqref{eq:equipartition} says that, asymptotically, this conserved quantity is shared equally between its two parts, in the spirit of the classical equipartition of energy for the wave equation \cite{Brodsky1967,Goldstein1969,Duffin1970}.

Our second and main result answers Problems~\ref{pb:1} and~\ref{pb:3}. For $0\le s<N/2$, we set
\begin{equation}\label{eq:theta}
\theta=\theta(N,s):=2+2s-N ,
\end{equation}
and, when $\theta>0$ (equivalently $\tfrac{N-2}{2}<s<\tfrac N2$, so that $\mu:=3+2s-N\in(1,3)$),
\begin{equation}\label{eq:kappa}
\kappa_{N,s}:=\int_{\R^N}\frac{\sin^2\abs{\eta}}{\abs{\eta}^{2+2s}}\dd\eta
=\frac{2^{\mu-3}\,\pi\,\sigma_{N-1}}{\Gamma(\mu)\,\abs{\cos(\pi\mu/2)}}
\in(0,\infty).
\end{equation}

\begin{theorem}[Exact first-order profiles]\label{th:profiles}
Let $N\ge1$ and $0\le s<N/2$, and let $u$ solve \eqref{eq:wave}--\eqref{eq:data} with
\[
u_1\in L^1(\R^N)\cap L^2(\R^N),\qquad u_0\in L^2(\R^N)\cap\dH^{-s}(\R^N).
\]
Put $P=\int_{\R^N}u_1\dd x$ and $\theta=2+2s-N$. Then the following statements hold.
\begin{enumerate}
\item[(i)] {(Polynomial regime, $\theta>0$.)}
\[
\lim_{t\to\infty}\ t^{-\theta}\,\norm{u(t,\cdot)}_{\dH^{-s}}^2
=\frac{\kappa_{N,s}}{(2\pi)^N}\,P^2 .
\]
\item[(ii)] {(Logarithmic regime, $\theta=0$.)}
\[
\lim_{t\to\infty}\ (\log t)^{-1}\,\norm{u(t,\cdot)}_{\dH^{-s}}^2
=\frac{\sigma_{N-1}}{2\,(2\pi)^N}\,P^2 .
\]
\item[(iii)] {(Convergent regime, $\theta<0$.)} {The velocity $u_1$ automatically belongs to $\dH^{-s-1}(\R^N)$}, and
\[
\lim_{t\to\infty}\norm{u(t,\cdot)}_{\dH^{-s}}^2
=\frac12\Bigl(\norm{u_1}_{\dH^{-s-1}}^2+\norm{u_0}_{\dH^{-s}}^2\Bigr).
\]
\end{enumerate}
\end{theorem}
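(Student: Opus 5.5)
The plan is to work entirely on the Fourier side through \eqref{eq:fourier-solution-intro} and Plancherel. The squared norm splits as
\[
(2\pi)^N\norm{u(t)}_{\dH^{-s}}^2=\int \frac{\sin^2(t|\xi|)}{|\xi|^{2+2s}}|\hat u_1|^2\dd\xi+\int \frac{\cos^2(t|\xi|)}{|\xi|^{2s}}|\hat u_0|^2\dd\xi+2\int\frac{\sin(t|\xi|)\cos(t|\xi|)}{|\xi|^{1+2s}}\Real(\hat u_1\overline{\hat u_0})\dd\xi .
\]
Call these three terms $A(t)$, $B(t)$ and $X(t)$.

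The term $B(t)$ is at most $(2\pi)^N\norm{u_0}_{\dH^{-s}}^2$, so it is uniformly bounded. By Cauchy--Schwarz, $|X(t)|\le 2A(t)^{1/2}B(t)^{1/2}=O(A^{1/2})$. In regimes (i) and (ii), where $A\to\infty$, both $B$ and $X$ are therefore negligible, and the whole task reduces to the asymptotics of $A(t)$. In regime (iii) I will argue exactly as in Theorem~\ref{th:dichotomy}, with the weight $|\xi|^{-2s}$ inserted. First I check that $u_1\in\dH^{-s-1}$: split $\int|\xi|^{-2s-2}|\hat u_1|^2$ into $|\xi|\le1$ and $|\xi|\ge1$. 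On $|\xi|\le1$ use $|\hat u_1|\le\norm{u_1}_{L^1}$ together with $\int_0^1 r^{N-3-2s}\dd r<\infty$, which holds since $\theta<0$. On $|\xi|\ge1$ use $\hat u_1\in L^2$. This makes all three amplitudes integrable. Writing $\sin^2=\frac12-\frac12\cos(2t|\xi|)$, $\cos^2=\frac12+\frac12\cos(2t|\xi|)$ and $2\sin\cos=\sin(2t|\xi|)$, the oscillatory integrals of $L^1$ functions tend to $0$ by the Riemann--Lebesgue lemma in polar coordinates: the radial function $\int_{\Sph^{N-1}}F(r\omega)\dd\omega$ lies in $L^1(0,\infty)$. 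This gives (iii).

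For (i), $\theta>0$, substitute $\xi=\eta/t$. This gives
\[
A(t)=t^{\theta}\int\frac{\sin^2|\eta|}{|\eta|^{2+2s}}\,|\hat u_1(\eta/t)|^2\dd\eta .
\]
Since $\hat u_1$ is continuous with $\hat u_1(0)=P$, the integrand converges pointwise to $P^2\sin^2|\eta|\,|\eta|^{-2-2s}$. To pass to the limit I will use dominated convergence with the bound $\norm{u_1}_{L^1}^2\min(|\eta|^{-2s},|\eta|^{-2-2s})$. This bound is integrable exactly when $2s<N<2+2s$, that is, when $s<N/2$ and $\theta>0$. The limit is $\kappa_{N,s}P^2$. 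The closed form of $\kappa_{N,s}$ is stated in \eqref{eq:kappa}, which reduces to the one-dimensional integral $\sigma_{N-1}\int_0^\infty\sin^2r\,r^{-\mu}\dd r$. I will verify that value by the standard Mellin/Gamma computation, integrating by parts to $\int_0^\infty \sin(2r)\,r^{1-\mu}\dd r$.

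For (ii), $\theta=0$, so $2+2s=N$ and the radial weight becomes $r^{-1}$. Write $|\hat u_1|^2=P^2+(|\hat u_1|^2-P^2)$, and split the frequencies into $|\xi|\le1$ and $|\xi|>1$.
\begin{itemize}
\item On $|\xi|>1$ the contribution is at most $\int_{|\xi|>1}|\hat u_1|^2<\infty$.
\item The main piece is $P^2\sigma_{N-1}\int_0^1\sin^2(tr)\,r^{-1}\dd r=P^2\sigma_{N-1}\int_0^t\sin^2 s\,s^{-1}\dd s=\frac{\sigma_{N-1}}2P^2\log t+O(1)$.
\item The error piece is $\int_{|\xi|\le1}\sin^2(t|\xi|)\,|\xi|^{-N}\,\bigl(|\hat u_1|^2-P^2\bigr)\dd\xi$. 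Set $\omega(r)=\sup_{|\xi|\le r}\bigl||\hat u_1(\xi)|^2-P^2\bigr|$, so that $\omega(r)\to0$ as $r\to0$. Bound $\sin^2(tr)\le\min(1,t^2r^2)$ and split at $r=1/t$ and at $r=\delta$. The region $r<1/t$ contributes $O(1)$. The region $1/t<r<\delta$ contributes at most $\omega(\delta)\sigma_{N-1}\log(t\delta)$. The region $r>\delta$ contributes $O(\log(1/\delta))$. Hence this piece is $o(\log t)$, since $\delta$ is arbitrary.
\end{itemize}

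I expect the main obstacle to be this $o(\log t)$ estimate in (ii). The reason is that no modulus of continuity of $\hat u_1$ is available: there is no moment assumption. So the argument must rely only on continuity at $0$ through the $\delta$-splitting above, rather than on a rate.
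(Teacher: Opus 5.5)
Your proposal is correct and follows the paper's proof essentially step by step: the same splitting into velocity, position and cross terms, the rescaling $\xi=\eta/t$ with dominated convergence in (i), Riemann--Lebesgue in polar coordinates in (iii), and in (ii) an $\varepsilon$--$\delta$ argument that uses only the continuity of $\hat u_1$ at $0$. In (ii) you run this directly with a sup-modulus and the bound $\sin^2(tr)\le\min(1,t^2r^2)$, whereas the paper passes to the spherical average $g$ and Lemma~\ref{lem:log}; this is the same idea. One small correction: when $P=0$ you need not have $A\to\infty$, but your Cauchy--Schwarz step still works, since $A=O(t^\theta)$ (resp.\ $O(\log t)$) and $B=O(1)$ give $X=O(t^{\theta/2})$ (resp.\ $O(\sqrt{\log t})$), which is exactly how the paper argues.
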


Several comments are in order.

\smallskip
\noindent{(a) Only $P$ survives at leading order.} In the polynomial and logarithmic regimes, the leading asymptotic coefficient depends on the data only through the single number $P=\int_{\R^N}u_1(x)\,\dd x$. The initial position $u_0$ is negligible at the relevant growing scale, and so is every feature of $u_1$ other than its total mass. More precisely, if $\theta>0$, then
$$
\norm{u(t,\cdot)}_{\dH^{-s}}^2 = \frac{\kappa_{N,s}}{(2\pi)^N}P^2t^\theta + o(t^\theta),
$$
whereas, in the critical case $\theta=0$,
$$
\norm{u(t,\cdot)}_{\dH^{-s}}^2 = \frac{\sigma_{N-1}}{2(2\pi)^N}P^2\log t + o(\log t).
$$
Thus, whenever $P\neq0$, the lower bounds in \eqref{eq:ike1}--\eqref{eq:ike2} identify the correct quantity, namely $\abs{P}$, and Corollary~\ref{cor:L2} below gives the exact value of the leading coefficient. The quantity $I_{0,N}$ appearing in the upper estimates affects at most lower-order terms and does not enter the principal term. Hence the answer to Problem~\ref{pb:1} is affirmative. When $P\neq0$, the exact first-order asymptotics is determined by $\abs{P}$ alone.

\medskip
\noindent{(b) No first moment is needed at leading order.} The weighted assumption $u_1\in L^{1,1}(\R^N)$ used in \cite{Ikehata2023} {for the lower bounds} is not needed to identify the leading-order growth. The proof of Theorem~\ref{th:profiles} uses only the continuity of $\hat u_1$ at the origin, namely
$$
\hat u_1(\xi)\longrightarrow\hat u_1(0)=P \qquad\text{as }\xi\to0,
$$
which follows from $u_1\in L^1(\R^N)$. No quantitative rate for this convergence is required. Example~\ref{ex:cauchy} below provides an initial velocity with $u_1\in L^1(\R)\setminus L^{1,1}(\R)$ to which Theorem~\ref{th:profiles} applies. In this sense, the theorem extends the class of data covered by \eqref{eq:ike1}--\eqref{eq:ike2}. The distinction between an exact leading term and a bounded remainder is important here. Under the sole assumption $u_1\in L^1\cap L^2$, the critical asymptotics reads
$$
\norm{u(t,\cdot)}_{\dH^{-s}}^2 = \frac{\sigma_{N-1}}{2(2\pi)^N}P^2\log t + o(\log t).
$$
A sharper remainder, such as $O(1)$, generally requires quantitative information on the behavior of $\hat u_1$ near the origin, and hence stronger assumptions on the data.

\medskip
\noindent{(c) A critical regularity.} The trichotomy is governed by the sign of $\theta=2+2s-N$, that is, by the position of $s$ relative to the critical index
\begin{equation}\label{eq:scrit}
s_c:=\frac{N-2}{2}.
\end{equation}
Indeed,
$$
\theta>0 \iff s>s_c, \qquad \theta=0 \iff s=s_c, \qquad \theta<0 \iff s<s_c.
$$
The meaning of $s_c$ is transparent in Fourier space. The propagator of the initial velocity contributes the factor $\sin(t\abs{\xi})/\abs{\xi}$, which introduces one inverse derivative at low frequency. Measuring the solution in $\dH^{-s}$ introduces the additional weight $\abs{\xi}^{-s}$. Thus the squared low-frequency amplitude carries the weight $\abs{\xi}^{-2-2s}$. Combining this singularity with the radial volume element $\abs{\xi}^{N-1}\,\dd\abs{\xi}$, we arrive at the model integral
$$
\int_0^1 r^{N-3-2s}\,\dd r .
$$
This integral diverges like a power when $s>s_c$, diverges logarithmically when $s=s_c$, and converges when $s<s_c$. The three asymptotic regimes are the dynamical counterparts of these three alternatives. {Note also that $\dH^{-s_c}(\R^N)$ is the space in which the norm of the solution is invariant under the scaling $u(t,x)\mapsto\lambda^{N-1}u(\lambda t,\lambda x)$, which preserves $P$. This is consistent with the fact that the growth at the critical index is only logarithmic.}

For $N=2$, we have $s_c=0$, so that the critical space is $\dH^{-s_c}(\R^2)=L^2(\R^2)$. The logarithmic regime therefore recovers and sharpens the two-dimensional $L^2$-result of \cite{Ikehata2023}. For $N=1$, we have $s_c=-\frac12$. Since Theorem~\ref{th:profiles} is stated for $s\geq0$, every admissible space $\dH^{-s}$ lies in the polynomial regime. In particular, $L^2(\R)=\dH^0(\R)$ lies strictly above the critical index and exhibits polynomial growth. Formally, the critical space corresponding to $s=s_c=-1/2$ is $\dH^{1/2}(\R)$, which is not part of the negative-order scale considered in Theorem~\ref{th:profiles}. For $N\geq3$, we have $s_c\geq1/2$. Hence $L^2=\dH^0$ lies in the convergent regime, while the negative-order space $\dH^{-(N-2)/2}(\R^N)$ is critical and exhibits logarithmic growth when $P\neq0$. Thus, in every dimension $N\geq2$, logarithmic growth occurs at the critical regularity $s=s_c$.

This answers Problem~\ref{pb:3}. The growth mechanism is not peculiar to low dimensions. It is a low-frequency phenomenon tied to a critical regularity, and the dimensions $N=1$ and $N=2$ are distinguished only because the unweighted space $L^2=\dH^0$ lies in the polynomial and logarithmic regimes, respectively. The three regimes are summarized in Table~\ref{tab:regimes}.

\begin{table}[ht]
\centering
\renewcommand{\arraystretch}{1.35}
\begin{tabular}{ >{\centering\arraybackslash}m{3.1cm} >{\centering\arraybackslash}m{3.7cm} >{\centering\arraybackslash}m{5.8cm}}
\hline
\textbf{Regime} & \textbf{Condition} & $\norm{u(t,\cdot)}_{\dH^{-s}}^2$ as $t\to\infty$ \\
\hline
Polynomial & $0\leq s<\dfrac N2$ and $s>s_c$ & $\dfrac{\kappa_{N,s}}{(2\pi)^N}P^2 t^{2s+2-N} +o\bigl(t^{2s+2-N}\bigr)$ \\[2mm]
Logarithmic & $0\leq s<\dfrac N2$ and $s=s_c$ & $\dfrac{\sigma_{N-1}}{2(2\pi)^N}P^2\log t +o(\log t)$ \\[2mm]
Convergent & $0\leq s<\dfrac N2$ and $s<s_c$ & $\dfrac12\left( \norm{u_1}_{\dH^{-s-1}}^2 + \norm{u_0}_{\dH^{-s}}^2 \right) +o(1)$ \\
\hline
\end{tabular}
\medskip
\caption{The three regimes of Theorem~\ref{th:profiles} for $u_1\in L^1(\R^N)\cap L^2(\R^N)$ with $P=\int_{\R^N}u_1(x)\,\dd x\neq0$. For $s=0$, the $L^2$-norm lies in the polynomial regime when $N=1$, in the logarithmic regime when $N=2$, and in the convergent regime when $N\geq3$.}
\label{tab:regimes}
\end{table}

Specializing Theorem~\ref{th:profiles} to $s=0$, we obtain the exact form of the low-dimensional growth estimates of \cite{Ikehata2023}.

\begin{corollary}\label{cor:L2}
Let $N\geq1$, let $u_0\in L^2(\R^N)$ and $u_1\in L^1(\R^N)\cap L^2(\R^N)$, and set $P:=\int_{\R^N}u_1(x)\,\dd x$. Then the following statements hold.
\begin{enumerate}
\item[(i)] If $N=1$, then
$$
\lim_{t\to\infty} \frac{\norm{u(t,\cdot)}^2}{t} = \frac{P^2}{2}.
$$
\item[(ii)] If $N=2$, then
$$
\lim_{t\to\infty} \frac{\norm{u(t,\cdot)}^2}{\log t} = \frac{P^2}{4\pi}.
$$
\item[(iii)] If $N\geq3$, then $u_1\in\dH^{-1}(\R^N)$ automatically, and
$$
\lim_{t\to\infty} \norm{u(t,\cdot)}^2 = \frac12 \left( \norm{u_1}_{\dH^{-1}}^2+\norm{u_0}^2 \right).
$$
\end{enumerate}
\end{corollary}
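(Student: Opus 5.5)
Corollary~\ref{cor:L2} is the specialization $s=0$ of Theorem~\ref{th:profiles}, so the plan is to check the hypotheses, identify which regime each dimension falls into, and evaluate the constants. With $s=0$ the assumption $u_0\in L^2\cap\dH^{0}$ reduces to $u_0\in L^2(\R^N)$, and $\dH^{0}=L^2$ with equal norms, by Plancherel in our normalization. Hence $\norm{u(t,\cdot)}_{\dH^{-0}}=\norm{u(t,\cdot)}$. The index $\theta=2-N$ equals $1$, $0$, or is negative according as $N=1$, $N=2$, or $N\ge3$. The constraint $0\le s<N/2$ holds in every dimension.

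\textbf{Case $N=1$.} Theorem~\ref{th:profiles}(i) gives the limit $\kappa_{1,0}P^2/(2\pi)$. It remains to compute $\kappa_{1,0}=\int_{\R}\sin^2\eta\,\eta^{-2}\dd\eta=\pi$. I would verify this directly via the classical integral $\int_0^\infty \sin^2 s/s^2\,\dd s=\pi/2$ (integrate by parts to reduce it to the Dirichlet integral). As a cross-check with \eqref{eq:kappa}: here $\mu=2$, $\sigma_0=2$, $\Gamma(2)=1$ and $\abs{\cos\pi}=1$, which gives $2^{-1}\pi\cdot2=\pi$. The limit is therefore $\pi P^2/(2\pi)=P^2/2$.

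\textbf{Case $N=2$.} Theorem~\ref{th:profiles}(ii) applies with $\sigma_1=2\pi$. The limit is $2\pi P^2/(2\cdot4\pi^2)=P^2/(4\pi)$, in agreement with the heuristic \eqref{eq:heur2}.

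\textbf{Case $N\ge3$.} Here $\theta<0$, so Theorem~\ref{th:profiles}(iii) yields both $u_1\in\dH^{-1}$ and the stated limit. Alternatively, one can argue directly as follows. Split the frequency integral at $\abs{\xi}=1$. Near the origin, use $\abs{\hat u_1}\le\norm{u_1}_{L^1}$ together with the integrability of $\abs{\xi}^{-2}$ for $N\ge3$. Away from the origin, bound the contribution by $\norm{u_1}^2$. This shows $u_1\in\dH^{-1}$, and then Theorem~\ref{th:dichotomy}, formula \eqref{eq:equipartition}, applies.

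No step is a real obstacle; the only point needing care is the arithmetic of the normalizing constants. One must use $\sigma_0=2$ for $N=1$ and keep the factor $(2\pi)^{-N}$ from Plancherel.
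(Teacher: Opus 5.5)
Your proposal is correct and matches the paper's proof: set $s=0$ in Theorem~\ref{th:profiles}, note that $\dH^{0}=L^2$ so the hypothesis on $u_0$ is just $u_0\in L^2$, and evaluate $\kappa_{1,0}/(2\pi)=1/2$ and $\sigma_1/(2(2\pi)^2)=1/(4\pi)$, with part~(iii) reducing to \eqref{eq:equipartition}. Your alternative route for $N\ge3$ via Theorem~\ref{th:dichotomy} also works, but that theorem is stated for $u_0\in H^1$. Its Step~1 uses only $u_0\in L^2$, so the route is valid, while the main route through Theorem~\ref{th:profiles}(iii) avoids the issue entirely.
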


The constants in the one- and two-dimensional limits agree exactly with those predicted by the physical-space heuristics \eqref{eq:heur1} and \eqref{eq:heur2}. This is a useful consistency check, and it confirms that the Fourier analysis captures the whole leading-order contribution.

Our next two results refine the leading-order profiles by identifying the second-order terms in the two regimes where the leading term is unbounded. We begin with the one-dimensional polynomial regime.

\begin{theorem}[Second-order expansion, $N=1$]\label{th:secondorder1}
Let $N=1$, $u_1\in {L^{1,1}(\R)}\cap L^2(\R)$ and $u_0\in L^{1,1}(\R)\cap L^2(\R)$. Set $P=\int_\R u_1\dd x$ and $Q=\int_\R u_0\dd x$. Then, as $t\to\infty$,
\begin{equation}\label{eq:secondorder1}
\norm{u(t,\cdot)}^2=\frac{P^2}{2}\,t+\mathcal C_1+o(1),
\qquad
\mathcal C_1:=\frac{1}{4\pi}\int_{\R}
\frac{\abs{\hat u_1(\xi)}^2-P^2}{\xi^2}\dd\xi
+\frac{\norm{u_0}^2}{2}+\frac{PQ}{2},
\end{equation}
the integral being absolutely convergent.
\end{theorem}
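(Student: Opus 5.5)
Write $\hat u(t,\xi)=\frac{\sin(t|\xi|)}{|\xi|}\hat u_1(\xi)+\cos(t|\xi|)\hat u_0(\xi)$. By Plancherel,
\[
2\pi\norm{u(t,\cdot)}^2=A(t)+B(t)+2C(t),
\]
with
\[
A(t)=\int_\R\frac{\sin^2(t\xi)}{\xi^2}|\hat u_1|^2\dd\xi,\qquad
B(t)=\int_\R\cos^2(t\xi)|\hat u_0|^2\dd\xi,\qquad
C(t)=\int_\R\frac{\sin(t\xi)\cos(t\xi)}{|\xi|}\Real\bigl(\hat u_1\overline{\hat u_0}\bigr)\dd\xi .
\]
(In one dimension $\sin(t|\xi|)/|\xi|=\sin(t\xi)/\xi$.) Since $u_0\in L^1$, the function $|\hat u_0|^2$ is integrable. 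Writing $\cos^2=\frac12+\frac12\cos(2t\xi)$ and applying the Riemann--Lebesgue lemma gives $B(t)=\frac12\cdot 2\pi\norm{u_0}^2+o(1)$. This accounts for the term $\norm{u_0}^2/2$.

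\textbf{The velocity term.} Split $|\hat u_1(\xi)|^2=P^2+g(\xi)$ with $g=|\hat u_1|^2-P^2$. Because $u_1\in L^{1,1}$, $\hat u_1$ is $C^1$ with bounded derivative, so $|g(\xi)|\le C|\xi|$ near $0$. Since $g$ is bounded, $g/\xi^2$ is integrable at infinity, and hence $g/\xi^2\in L^1(\R)$, which proves the stated absolute convergence. The $P^2$ part gives exactly
\[
P^2\int_\R\frac{\sin^2(t\xi)}{\xi^2}\dd\xi=P^2\pi t .
\]
The $g$ part equals $\frac12\int g\xi^{-2}\dd\xi-\frac12\int g\xi^{-2}\cos(2t\xi)\dd\xi$, and the second integral is $o(1)$ by Riemann--Lebesgue because $g\xi^{-2}\in L^1$. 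Dividing by $2\pi$ yields
\[
\frac{P^2}{2}t+\frac1{4\pi}\int_\R\frac{g}{\xi^2}\dd\xi+o(1).
\]

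\textbf{The cross term.} This is the main point, since its $O(1)$ limit produces $PQ/2$. Write $C(t)=\frac12\int_\R \frac{\sin(2t\xi)}{|\xi|}h(\xi)\dd\xi$ with $h=\Real(\hat u_1\overline{\hat u_0})$. The function $h$ is even in $\xi$ because the data are real, so $\frac{\sin(2t\xi)}{|\xi|}h(\xi)$ is odd and the integral over $\R$ vanishes. That contradicts the expected $PQ/2$, so the representation must be set up more carefully. The real origin of $PQ$ is presumably the cross term $\Real\bigl(\frac{\sin(t\xi)}{\xi}\hat u_1\overline{\cos(t\xi)\hat u_0}\bigr)$ with $\sin(t\xi)/\xi$, which is even.

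With that form, the cross term is
\[
\int_\R\frac{\sin(2t\xi)}{2\xi}h(\xi)\dd\xi,
\]
where $h$ is even, $C^1$, and $h(0)=PQ$. I would split $h=PQ\,\chi+(h-PQ\,\chi)$ with $\chi$ a smooth even cutoff equal to $1$ near $0$.

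For the first piece, $\int\frac{\sin(2t\xi)}{\xi}\chi\dd\xi\to\pi$ by the Dirichlet integral, so this piece tends to $\frac{PQ}{2}\pi$.

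For the second piece, the function $(h-PQ\chi)/\xi$ is bounded near $0$ (using the $C^1$ regularity of $h$). At infinity it is dominated by $|\hat u_1||\hat u_0|/|\xi|$, which is integrable by Cauchy--Schwarz. Hence $(h-PQ\chi)/\xi\in L^1$, and Riemann--Lebesgue gives $o(1)$.

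Altogether, $2\times\frac12\cdot\frac{PQ\pi}{2\pi}$ gives the contribution $PQ/2$ once the factor $2$ and the $2\pi$ normalization are tracked. Keeping these constants and the parity conventions straight is the step I expect to need the most care. Summing the three contributions gives \eqref{eq:secondorder1}.
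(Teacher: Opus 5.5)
The overall architecture of your proof is the paper's: the same Plancherel splitting, Riemann--Lebesgue for the position term and for the velocity remainder, and a Dirichlet-integral argument for the cross term. Your initial ``contradiction'' in the cross term was only a slip: you wrote $\sin(t\xi)/\abs{\xi}$ instead of $\sin(t\abs{\xi})/\abs{\xi}$. The corrected multiplier $\sin(2t\xi)/(2\xi)$ is even and is the paper's. With it, your cutoff argument is equivalent to Lemma~\ref{lem:dirichlet}, and the contributions $PQ/2$ and $\norm{u_0}^2/2$ come out correctly. A minor point: $\abs{\hat u_0}^2\in L^1$ follows from $u_0\in L^2$, not from $u_0\in L^1$.

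The genuine gap is your claim that $g/\xi^2\in L^1(\R)$, where $g=\abs{\hat u_1}^2-P^2$. From $\abs{g(\xi)}\le C\abs{\xi}$ you only get $\abs{g(\xi)}/\xi^2\le C/\abs{\xi}$, which is not integrable at the origin, so your ``hence'' does not follow. The Poisson velocity of Example~\ref{ex:cauchy} shows this is not a technicality. There $\abs{g(\xi)}\le C\abs{\xi}$ holds, yet $g/\xi^2\notin L^1$, and the second-order term is $-\pi\log t$ rather than a constant. Using evenness and $C^1$ regularity improves the pointwise information only to $g(\xi)=o(\abs{\xi})$, which still does not imply integrability. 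Since $L^{1,1}$ is exactly the borderline hypothesis, an averaged estimate is required.

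The paper's Lemma~\ref{lem:taylor} supplies it, using real-valuedness. Write $\hat u_1=X-iY$ with $X(\xi)=\int\cos(x\xi)u_1\dd x$ and $Y(\xi)=\int\sin(x\xi)u_1\dd x$, so that $g=(X-P)(X+P)+Y^2$. The term $Y^2\le m_1^2\xi^2$ is harmless, since the linear term cancels. For the other term, use $\abs{X-P}\le\int(1-\cos(x\xi))\abs{u_1(x)}\dd x$, Tonelli, and $\int_{-1}^{1}\xi^{-2}(1-\cos(x\xi))\dd\xi\le\pi\abs{x}$. This gives $\int_{\abs{\xi}\le1}\abs{X-P}\,\xi^{-2}\dd\xi\le\pi m_1$, with $m_1=\int\abs{x}\abs{u_1}\dd x$, and $\abs{X+P}\le2\norm{u_1}_{L^1}$ handles the remaining factor. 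With this step inserted, the rest of your argument goes through.
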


{The moment assumption $u_1\in L^{1,1}(\R)$ is sharp on the scale $L^{1,\alpha}(\R)$: by Example~\ref{ex:cauchy}, the conclusion fails for a velocity that belongs to $L^{1,\alpha}(\R)$ for every $\alpha<1$; see also Remark~\ref{rem:moment-sharp}.}

The second expansion concerns the critical regularity. It holds in every dimension $N\ge2$ and contains, in particular, the second-order expansion of $\norm{u(t,\cdot)}_{L^2(\R^2)}$.

\begin{theorem}[Second-order expansion at critical regularity]\label{th:secondorder2}
Let $N\ge2$, $s_c=(N-2)/2$, $u_1\in L^{1,1}(\R^N)\cap L^2(\R^N)$ and $u_0\in L^1(\R^N)\cap L^2(\R^N)$. Let
\begin{equation}\label{eq:g}
g(r):=\int_{\Sph^{N-1}}\bigl\lvert\hat u_1(r\omega)\bigr\rvert^2\dd\omega,
\qquad r>0,\qquad\text{so that}\quad g(0^+)=\sigma_{N-1}P^2 .
\end{equation}
Then, as $t\to\infty$,
\begin{equation}\label{eq:secondorder2}
\norm{u(t,\cdot)}^2_{\dH^{-s_c}}
=\frac{\sigma_{N-1}P^2}{2\,(2\pi)^N}\,\log t+\mathcal C_2+o(1),
\end{equation}
where
\begin{equation}\label{eq:C2}
\mathcal C_2:=\frac{1}{2\,(2\pi)^N}\left[\sigma_{N-1}P^2\log2
+\int_0^\infty\frac{g(r)-\sigma_{N-1}P^2e^{-r}}{r}\dd r\right]
+\frac{\norm{u_0}^2_{\dH^{-s_c}}}{2},
\end{equation}
the integral being absolutely convergent. In particular, for $N=2$,
\begin{equation}\label{eq:secondorder2d}
\norm{u(t,\cdot)}^2_{L^2(\R^2)}
=\frac{P^2}{4\pi}\log t
+\frac{1}{8\pi^2}\left[2\pi P^2\log2
+\int_0^\infty\frac{g(r)-2\pi P^2e^{-r}}{r}\dd r\right]
+\frac{\norm{u_0}^2}{2}+o(1).
\end{equation}
\end{theorem}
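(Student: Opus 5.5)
Via Plancherel in polar coordinates, the weight $\abs{\xi}^{-2s_c}\abs{\xi}^{N-1}=r$ turns the $\hat u_1$-part into a one-dimensional integral:
\[
\norm{u(t)}_{\dH^{-s_c}}^2=\frac{1}{(2\pi)^N}\int_{\R^N}\abs{\xi}^{2-N}\Bigl|\tfrac{\sin(t\abs\xi)}{\abs\xi}\hat u_1+\cos(t\abs\xi)\hat u_0\Bigr|^2\dd\xi .
\]
The $u_1$-part is therefore $\frac{1}{(2\pi)^N}\int_0^\infty \frac{\sin^2(tr)}{r}g(r)\dd r$. The $u_0$-part is $\frac{1}{(2\pi)^N}\int\abs\xi^{2-N}\cos^2(t\abs\xi)\abs{\hat u_0}^2\dd\xi$. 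The cross term is $\frac{2}{(2\pi)^N}\int\abs\xi^{1-N}\sin(t\abs\xi)\cos(t\abs\xi)\Real(\hat u_1\overline{\hat u_0})\dd\xi$.

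First, dispose of the last two pieces by Riemann--Lebesgue. Since $u_0\in L^1\cap L^2$, $\hat u_0$ is bounded and in $L^2$, so $\abs\xi^{2-N}\abs{\hat u_0}^2\in L^1$ for $N\ge2$. Writing $\cos^2=\frac12+\frac12\cos(2t\abs\xi)$ and passing to polar coordinates shows the $u_0$-part tends to $\frac12\norm{u_0}^2_{\dH^{-s_c}}$. The cross term carries $\frac12\sin(2t\abs\xi)$ against $\abs\xi^{1-N}\hat u_1\overline{\hat u_0}$, which is integrable: near $0$ the radial density is $O(1)$, and at infinity Cauchy--Schwarz applies. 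Hence the cross term tends to $0$, again by Riemann--Lebesgue on the radial integral.

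The main step is the $u_1$-part. Write $\sin^2(tr)=\frac12-\frac12\cos(2tr)$ and split $g(r)=\sigma_{N-1}P^2e^{-r}+h(r)$. The model part is $\sigma_{N-1}P^2\int_0^\infty\sin^2(tr)e^{-r}r^{-1}\dd r$, which I compute exactly via Frullani: $\frac12\int_0^\infty(1-\cos(2tr))e^{-r}r^{-1}\dd r=\frac14\log(1+4t^2)=\frac12\log(2t)+o(1)$. This yields $\frac12\sigma_{N-1}P^2(\log t+\log 2)$.

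For the remainder $\int_0^\infty \sin^2(tr)h(r)r^{-1}\dd r$, I need $h(r)/r\in L^1(0,\infty)$. At infinity, $g\in L^1(\dd r/r)$ on $[1,\infty)$ because $\int_1^\infty g\,\dd r\lesssim\norm{u_1}^2$, and $e^{-r}/r$ is integrable there. Near $0$, $u_1\in L^{1,1}$ makes $\hat u_1$ Lipschitz, so $\abs{\abs{\hat u_1(r\omega)}^2-P^2}\lesssim r$; combined with $1-e^{-r}=O(r)$, this gives $h(r)=O(r)$ and $h(r)/r$ bounded on $[0,1]$. Riemann--Lebesgue then gives $\int\cos(2tr)h(r)r^{-1}\dd r\to0$, so this part tends to $\frac12\int_0^\infty h(r)r^{-1}\dd r$, which is exactly the integral in \eqref{eq:C2}.

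Collecting terms and multiplying by $(2\pi)^{-N}$ gives \eqref{eq:secondorder2} with $\mathcal C_2$ as in \eqref{eq:C2}. For $N=2$, $\sigma_1=2\pi$ and $s_c=0$ give \eqref{eq:secondorder2d}.

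The only delicate point is the absolute integrability of $h(r)/r$ near the origin. This is where the moment assumption enters: it gives the Lipschitz bound $\abs{\hat u_1(\xi)-P}\le\abs\xi\norm{u_1}_{1,1}$. I would verify carefully that the large-$r$ integrability uses only $u_1\in L^2$ together with $u_0\in L^1$ for the cross term.
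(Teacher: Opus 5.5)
Your proposal is correct and follows the paper's proof essentially step for step. The $u_1$-part is the paper's second-order logarithmic lemma written out inline: you split $g=\sigma_{N-1}P^2e^{-r}+h$, use Frullani for the model piece, and apply Riemann--Lebesgue to $h/r\in L^1(0,\infty)$, with the bound near $0$ coming from $u_1\in L^{1,1}$. The $u_0$-term and the cross term are handled by radial Riemann--Lebesgue exactly as in the paper.

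One small slip is in your closing remark. At large $r$ the cross term is integrable by Cauchy--Schwarz using only $u_0,u_1\in L^2$. The hypothesis $u_0\in L^1$, through the boundedness of $\hat u_0$, is needed near the origin, which is what you correctly argued earlier.
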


The auxiliary exponential $e^{-r}$ in \eqref{eq:C2} is of course arbitrary. Any fixed function $\varphi$ with $\varphi(0)=1$, $\varphi$ Lipschitz near $0$ and $\varphi(r)/r$ integrable at infinity may be used instead, provided the term $\sigma_{N-1}P^2\log2$ is modified accordingly; see Remark~\ref{rem:normalization}. We have chosen $e^{-r}$ because it makes the examples of Section~\ref{sec:examples} computable in closed form.

Finally, we return to the original motivation of \cite{Ikehata2023} and use the sharp asymptotics of the $L^2$-norm obtained above to improve the local energy decay estimate in two dimensions. For $R>0$, we set $B_R:=\left\{x\in\R^2:\abs{x}\leq R\right\}$ and define
\begin{equation}\label{eq:localenergy}
E_R(t) := {\frac12}\int_{B_R} \left( \abs{u_t(t,x)}^2+\abs{\nabla u(t,x)}^2 \right)\,\dd x .
\end{equation}

\begin{theorem}[Local energy decay with explicit constant, $N=2$]\label{th:LED}
Let $N=2$, $R>0$, and let $[u_0,u_1]\in H^1(\R^2)\times L^2(\R^2)$ satisfy {$u_1\in L^1(\R^2)$} and
$\int_{\R^2}\abs{x}\bigl(\abs{u_1(x)}^2+\abs{\nabla u_0(x)}^2\bigr)\dd x<\infty$.
Then
\begin{equation}\label{eq:LEDsharp}
\limsup_{t\to\infty}\ \frac{t\,E_R(t)}{\sqrt{\log t}}\ \le\
\frac{\abs{P}}{2}\sqrt{\frac{E(0)}{2\pi}},
\end{equation}
and the right-hand side does not depend on $R$. If, moreover, $u_1\in\dH^{-1}(\R^2)$ (for instance, if $u_1\in L^{1,1}(\R^2)$ and $P=0$), then
\begin{equation}\label{eq:LEDbounded}
\limsup_{t\to\infty}\ t\,E_R(t)\ \le\ K_0
+\frac{\sqrt{E(0)}}{2}\Bigl(\norm{u_1}_{\dH^{-1}}^2+\norm{u_0}^2\Bigr)^{1/2},
\end{equation}
where $K_0$ is the constant defined in \eqref{eq:K0} below.
\end{theorem}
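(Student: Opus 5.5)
The proof is a conformal (Morawetz-type) multiplier identity, which reduces $tE_R(t)$ to the mixed term $F(t)=\int u_tu\,dx$ with the sharp factor $\tfrac12$, followed by an application of the exact $L^2$-asymptotics. The constant in \eqref{eq:LEDsharp} dictates this factor. Indeed,
$$
|F(t)|\le\sqrt{2E(0)}\,M(t),\qquad M(t)^2\sim\frac{P^2}{4\pi}\log t
$$
by Corollary~\ref{cor:L2}(ii), and $\tfrac12\sqrt{2E(0)}\,\tfrac{|P|}{2\sqrt\pi}=\tfrac{|P|}{2}\sqrt{E(0)/(2\pi)}$. The same factor $\tfrac12$, combined with \eqref{eq:equipartition}, gives \eqref{eq:LEDbounded}. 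So the goal is an inequality of the form
$$
tE_R(t)\le K_0+R\,E_R(t)+\tfrac12|F(t)|,
$$
and I will argue first for smooth, rapidly decaying data, then conclude by density.

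\textbf{Step 1 (exact conservation law for $N=2$).} Set $e=\tfrac12(u_t^2+|\nabla u|^2)$ and
$$
X(t):=\int_{\R^2}\Bigl(t\,e+u_t\,x\cdot\nabla u+\tfrac12 u\,u_t\Bigr)dx .
$$
Differentiate each term and integrate by parts.
\begin{itemize}
\item $\frac{d}{dt}\int t\,e\,dx=E$.
\item $\frac{d}{dt}\int u_t\,x\cdot\nabla u\,dx=\frac{N-2}{2}\|\nabla u\|^2-\frac N2\|u_t\|^2=-\|u_t\|^2$ when $N=2$.
\item $\frac{d}{dt}\int \tfrac12 u u_t\,dx=\tfrac12\bigl(\|u_t\|^2-\|\nabla u\|^2\bigr)$.
\end{itemize}
These sum to zero, so $X(t)=X(0)=\int(x\cdot\nabla u_0\,u_1+\tfrac12u_0u_1)\,dx$. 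This is finite by Cauchy--Schwarz together with the weighted assumption, since $\int|x|\,|\nabla u_0|\,|u_1|\le\tfrac12\int|x|(|u_1|^2+|\nabla u_0|^2)$.

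\textbf{Step 2 (lower bound of the energy part).} Pointwise, $t\,e+u_t\,x\cdot\nabla u\ge (t-|x|)\,e$. Hence
$$
\int\bigl(te+u_t\,x\cdot\nabla u\bigr)\ge (t-R)E_R(t)-\int_{|x|>t}(|x|-t)\,e(t,x)\,dx .
$$
This holds because $(t-|x|)e\ge0$ on $R<|x|<t$, and $(t-|x|)e\ge(t-R)e$ on $B_R$. The remaining task is the tail outside the light cone, and this is the step I expect to be the main obstacle, since the data are not compactly supported. I would control it by the weighted energy estimate
$$
\int(|x|-t)_+\,e(t,x)\,dx\le\int|x|\,e(0,x)\,dx .
$$
This follows from the flux identity $\partial_te=\operatorname{div}(u_t\nabla u)$, tested against $(|x|-t)_+$: the time derivative of $\int (|x|-t)_+e$ equals $-\int_{|x|>t}\bigl(e+u_t\,\tfrac{x}{|x|}\cdot\nabla u\bigr)\le0$. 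The right-hand side is finite by the hypothesis on $|x|(|u_1|^2+|\nabla u_0|^2)$.

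\textbf{Step 3 (passage to the limit).} With
$$
K_0:=|X(0)|+\int|x|\,e(0,x)\,dx,
$$
Steps 1 and 2 give $(t-R)E_R(t)\le K_0+\tfrac12|F(t)|$.
\begin{itemize}
\item \emph{First claim.} Since $|F(t)|\le\sqrt{2E(0)}\,M(t)$ and $E_R\le E(0)$, dividing by $\sqrt{\log t}$ and using Corollary~\ref{cor:L2}(ii) yields \eqref{eq:LEDsharp}. The terms $K_0$ and $RE_R$ disappear after division, so the bound is independent of $R$.
\item \emph{Second claim.} Here $u_1\in\dH^{-1}(\R^2)$, so Theorem~\ref{th:dichotomy} gives $M(t)^2\to\tfrac12(\|u_1\|_{\dH^{-1}}^2+\|u_0\|^2)$. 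The inequality then shows $E_R(t)=O(1/t)$, so $R\,E_R(t)\to0$ and can be dropped in the limsup. This gives \eqref{eq:LEDbounded}.
\item \emph{Example in the parenthetical.} When $u_1\in L^{1,1}$ and $P=0$, we have $|\hat u_1(\xi)|\le C|\xi|$, so $u_1\in\dH^{-1}(\R^2)$.
\item \emph{Density.} A final approximation argument removes the smoothness and decay assumptions. This uses continuity of all quantities in the energy space and in the weighted norm, with $F$ continuous for fixed $t$.
\end{itemize}
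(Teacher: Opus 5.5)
Your proof is correct and is essentially the paper's argument, namely Lemma~\ref{lem:midway} followed by Corollary~\ref{cor:L2}(ii) and Theorem~\ref{th:dichotomy}: the same $N=2$ Morawetz identity, the pointwise bound $\abs{u_t\,x\cdot\nabla u}\le\abs{x}\,e$, the monotonicity of $\int(\abs{x}-t)_+e\,\dd x$ for the region outside the light cone, and density. One small correction: Step~1 gives $\int(te+u_t\,x\cdot\nabla u)\,\dd x=X(0)-\tfrac12F(t)\le X(0)+\tfrac12\abs{F(t)}$, so you should not replace $X(0)$ by $\abs{X(0)}$ in $K_0$, since otherwise you only obtain \eqref{eq:LEDbounded} with a constant larger than the $K_0$ of \eqref{eq:K0} whenever $X(0)<0$, which does occur (see Remark~\ref{rem:midway-ikehata}).
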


{Whether the rate $t^{-1}\sqrt{\log t}$ in \eqref{eq:LEDsharp} is optimal for the local energy itself is, as far as we know, an open problem; see Problem~\ref{pb:LEDsharp}.}

\subsection{Comments on the literature}\label{ss:lit}
Part~(iii) of Theorem~\ref{th:profiles}, and hence the implication
$$
u_1\in\dH^{-1}(\R^N) \quad\Longrightarrow\quad \sup_{t\geq0}\norm{u(t,\cdot)}<\infty
$$
in Theorem~\ref{th:dichotomy}, follows from a direct Fourier argument based on the Riemann--Lebesgue lemma. This argument is closely related in spirit to classical equipartition results for the free wave equation {\cite{Brodsky1967,Goldstein1969,Duffin1970}}. We nevertheless include the proof for completeness and, more importantly, because the same Fourier identity also reveals the polynomial and logarithmic regimes when it is examined at and above the critical regularity. From this perspective, the bounded and unbounded regimes are not separate phenomena. They arise from the same oscillatory integral, and the transition between them is decided by the integrability of its low-frequency amplitude. Below the critical index, the amplitude is integrable and the oscillations produce a finite averaged limit. At the critical index, integrability fails logarithmically, which produces logarithmic growth. Above the critical index, the low-frequency singularity produces polynomial growth. In each case, the leading constant can be computed explicitly.

Ikehata's results have recently been extended in several directions. Chen and Ikehata \cite{ChenIkehata2025} obtained optimal two-sided bounds, with unspecified constants, for the $L^2$-norm of the solution when the initial velocity has fractional derivatives $\abs{\nabla}^{s}u_1$ in $L^1\cap L^2$, in every dimension. Since $\norm{u(t,\cdot)}_{\dH^{-s}}$ is the $L^2$-norm of the solution with data $\abs{\nabla}^{-s}u_0$ and $\abs{\nabla}^{-s}u_1$, their polynomial, logarithmic and bounded regimes correspond, up to multiplicative constants and under weighted $L^1$ assumptions for the lower bounds, to the three regimes of Theorem~\ref{th:profiles}. They also raised the question of the minimal regularity of the data that keeps $\norm{u(t,\cdot)}$ bounded, and Theorem~\ref{th:dichotomy} answers this question.\ifmentionchen{}  Infinite-time $L^2$-blowup has also been established for the plate equation \cite{Ikehata2024plate}, for wave equations with higher-order derivative terms \cite{IkehataLi2023}, for the fractional wave equation \cite{IkedaZhao2024}, and for wave equations with logarithmic damping \cite{CharaoIkehata2022log}.

The principal contributions of the present paper are the exact leading constants in parts~(i) and~(ii) of Theorem~\ref{th:profiles}, {obtained under the sole assumption $u_1\in L^1\cap L^2$}, the necessary and sufficient boundedness criterion of Theorem~\ref{th:dichotomy}, the second-order expansions \eqref{eq:secondorder1} and \eqref{eq:secondorder2}, and the local energy estimate \eqref{eq:LEDsharp} with an explicit asymptotic constant. We are not aware of previous results establishing these conclusions under the present assumptions. {By contrast, the elementary physical-space arguments of Subsections~\ref{ss:ikehata} and~\ref{ss:mechanism} only give alternative proofs of special cases of \cite{Ikehata2023} for compactly supported data. The phenomenon of infinite-time $L^2$-blowup for free waves, and the motivation for studying it, are due to \cite{Ikehata2023}; together with \cite{ChenIkehata2025}, that work is the basis of the present paper.}

The remainder of the paper is organized as follows. Section~\ref{sec:prelim} collects the elementary harmonic-analysis lemmas used throughout the paper, all of which are variants of the Riemann--Lebesgue lemma. Section~\ref{sec:dichotomy} is devoted to the proof of Theorem~\ref{th:dichotomy}, while Section~\ref{sec:profiles} establishes Theorem~\ref{th:profiles} and Corollary~\ref{cor:L2}. Section~\ref{sec:secondorder} contains the two second-order asymptotic expansions stated in Theorems~\ref{th:secondorder1} and~\ref{th:secondorder2}. In Section~\ref{sec:examples}, we present six explicit examples, including an exactly solvable one-dimensional case and an example demonstrating the sharpness of the moment hypothesis in Theorem~\ref{th:secondorder1}. Section~\ref{sec:LED} presents the application to local energy decay, and Section~\ref{sec:open} concludes with further remarks and open problems.
\section{Preliminaries and supporting results}\label{sec:prelim}

In this section, we collect the Fourier identities and oscillatory integral estimates used throughout the paper. We begin with the Fourier representation of solutions to the free wave equation, and we split the relevant homogeneous Sobolev norm into the contributions of the initial velocity, of the initial position, and of their interaction. We then record radial versions of the Riemann--Lebesgue and Dirichlet lemmas, together with first and second-order asymptotic formulas for the critical logarithmic integral.

Taking the Fourier transform of \eqref{eq:wave}--\eqref{eq:data} in the space variable, we find that $w(t,\xi):=\hat u(t,\xi)$ solves the ordinary differential equation
$$
w_{tt}(t,\xi)+\abs{\xi}^2w(t,\xi)=0,
$$
depending on the parameter $\xi$, with initial conditions $w(0,\xi)=\hat u_0(\xi)$ and $w_t(0,\xi)=\hat u_1(\xi)$. Consequently,
\begin{equation}\label{eq:fourier-solution}
\hat u(t,\xi) = \frac{\sin(t\abs{\xi})}{\abs{\xi}}\,\hat u_1(\xi) + \cos(t\abs{\xi})\,\hat u_0(\xi),
\end{equation}
where the quotient $\sin(t\abs{\xi})/\abs{\xi}$ is understood to be equal to $t$ at $\xi=0$. Whenever the weighted Fourier integrals below are finite, expanding the square in \eqref{eq:fourier-solution} gives
\begin{equation}\label{eq:expansion}
(2\pi)^N\norm{u(t,\cdot)}_{\dH^{-s}}^2 = \mathcal A_s(t)+\mathcal B_s(t)+\mathcal D_s(t),
\end{equation}
where
\begin{align}
\mathcal A_s(t) &:= \int_{\R^N} \frac{\sin^2(t\abs{\xi})}{\abs{\xi}^{2s+2}} \abs{\hat u_1(\xi)}^2\,\dd\xi, \label{eq:A}\\
\mathcal B_s(t) &:= \int_{\R^N} \frac{\cos^2(t\abs{\xi})}{\abs{\xi}^{2s}} \abs{\hat u_0(\xi)}^2\,\dd\xi, \label{eq:B}\\
\mathcal D_s(t) &:= \int_{\R^N} \frac{\sin(2t\abs{\xi})}{\abs{\xi}^{2s+1}} \Real\left( \hat u_1(\xi)\overline{\hat u_0(\xi)} \right)\,\dd\xi. \label{eq:D}
\end{align}
Here we used the identity $2\sin\alpha\cos\alpha=\sin(2\alpha)$. By the Cauchy--Schwarz inequality, we have
\begin{equation}\label{eq:cross-CS}
\abs{\mathcal D_s(t)} \leq 2\sqrt{\mathcal A_s(t)}\sqrt{\mathcal B_s(t)}.
\end{equation}
In particular, if $\mathcal B_s(t)$ remains bounded and $\mathcal A_s(t)\to\infty$, we obtain
$$
\frac{\abs{\mathcal D_s(t)}}{\mathcal A_s(t)} \leq 2\sqrt{ \frac{\mathcal B_s(t)}{\mathcal A_s(t)} } \longrightarrow0.
$$
In this situation, the mixed term is of strictly lower order than $\mathcal A_s(t)$, and the leading growth is generated entirely by the initial velocity.

We shall use the following elementary facts. We begin with a radial version of the Riemann--Lebesgue lemma.

\begin{lemma}\label{lem:RL}
Let $F\in L^1(\R^N)$. Then
$$
\int_{\R^N}F(\xi)e^{i\lambda\abs{\xi}}\,\dd\xi \longrightarrow0 \qquad\text{as }\abs{\lambda}\to\infty.
$$
\end{lemma}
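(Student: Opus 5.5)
We prove Lemma~\ref{lem:RL} by density, reducing it to a one-dimensional Riemann--Lebesgue statement through polar coordinates.

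The first step is the standard $\varepsilon/3$ reduction. The map
$$
F\mapsto I_\lambda(F):=\int_{\R^N}F(\xi)e^{i\lambda\abs{\xi}}\,\dd\xi
$$
is linear and satisfies $\abs{I_\lambda(F)}\le\norm{F}_{L^1}$ uniformly in $\lambda$. It is therefore enough to prove $I_\lambda(F)\to0$ for $F$ in a dense subclass of $L^1(\R^N)$. Moreover, $I_{-\lambda}(F)=\overline{I_\lambda(\bar F)}$, so we may assume $\lambda\to+\infty$.

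Next we pass to polar coordinates $\xi=r\omega$ with $r>0$ and $\omega\in\Sph^{N-1}$ (for $N=1$, $\Sph^0=\{\pm1\}$ with counting measure). This gives
$$
I_\lambda(F)=\int_0^\infty e^{i\lambda r}\,h(r)\,\dd r,\qquad h(r):=r^{N-1}\int_{\Sph^{N-1}}F(r\omega)\,\dd\omega .
$$
By Fubini in polar coordinates, $h$ is defined for a.e.\ $r$, and
$$
\int_0^\infty\abs{h(r)}\,\dd r\le\int_{\R^N}\abs{F}\,\dd\xi<\infty ,
$$
so $h\in L^1(0,\infty)$. The quantity $I_\lambda(F)$ is then the Fourier transform of $h\mathbf 1_{(0,\infty)}\in L^1(\R)$, evaluated at $-\lambda$. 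The classical one-dimensional Riemann--Lebesgue lemma therefore gives $I_\lambda(F)\to0$ directly, and the density step above is not even needed.

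If a self-contained argument is preferred, one can instead approximate $h$ in $L^1(0,\infty)$ by finite linear combinations of indicator functions $\mathbf 1_{[a,b]}$. For each such indicator,
$$
\Bigl|\int_a^b e^{i\lambda r}\,\dd r\Bigr|\le\frac{2}{\lambda},
$$
which tends to $0$, and the uniform bound $\abs{\int_0^\infty e^{i\lambda r}h(r)\,\dd r}\le\norm{h}_{L^1(0,\infty)}$ lets the approximation pass to the limit.

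The only point requiring care is the measure-theoretic justification that $h$ is well defined and integrable. This follows from Tonelli's theorem applied to $\abs{F}$ in polar coordinates, so there is no genuine obstacle.
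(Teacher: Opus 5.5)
Your proof is correct and follows the paper's own argument: you pass to polar coordinates, use Tonelli to show that $h(r)=r^{N-1}\int_{\Sph^{N-1}}F(r\omega)\,\dd\omega$ lies in $L^1(0,\infty)$ with $\norm{h}_{L^1}\le\norm{F}_{L^1}$, and then apply the one-dimensional Riemann--Lebesgue lemma. The density reduction, the conjugation trick for $\lambda\to-\infty$ and the step-function argument you add are all valid but unnecessary, as you yourself observe.
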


\begin{proof}
By polar coordinates, we have
$$
\int_{\R^N}F(\xi)e^{i\lambda\abs{\xi}}\,\dd\xi = \int_0^\infty G(r)e^{i\lambda r}\,\dd r,
\qquad
G(r) := r^{N-1} \int_{\Sph^{N-1}}F(r\omega)\,\dd\omega.
$$
Moreover, the triangle inequality and the polar-coordinate formula give
$$
\int_0^\infty\abs{G(r)}\,\dd r \leq \int_0^\infty r^{N-1} \int_{\Sph^{N-1}} \abs{F(r\omega)}\,\dd\omega\,\dd r = \norm{F}_{L^1(\R^N)}.
$$
Hence $G\in L^1(0,\infty)$, and the conclusion follows from the classical Riemann--Lebesgue lemma on the half-line {(see, e.g., \cite[Chapter~I]{SteinWeiss1971})}.
\end{proof}

The next result is a Dirichlet-type lemma.

\begin{lemma}\label{lem:dirichlet}
Let $h:(0,\infty)\to\R$ be measurable and such that
\[
h(0^+):=\lim_{\xi\to0^+}h(\xi)\ \text{ exists},\qquad
\frac{h(\xi)-h(0^+)}{\xi}\in L^1(0,1),\qquad
\frac{h(\xi)}{\xi}\in L^1(1,\infty).
\]
Then
$\displaystyle\int_0^\infty\frac{\sin(\lambda\xi)}{\xi}h(\xi)\dd\xi
\longrightarrow\frac{\pi}{2}h(0^+)$ as $\lambda\to+\infty$.
\end{lemma}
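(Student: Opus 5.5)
We split off the value at the origin and treat the two resulting integrals separately. Set $c:=h(0^+)$ and write
\[
\int_0^\infty\frac{\sin(\lambda\xi)}{\xi}h(\xi)\dd\xi
= c\int_0^1\frac{\sin(\lambda\xi)}{\xi}\dd\xi
+\int_0^1\sin(\lambda\xi)\,\frac{h(\xi)-c}{\xi}\dd\xi
+\int_1^\infty\sin(\lambda\xi)\,\frac{h(\xi)}{\xi}\dd\xi .
\]
The hypotheses guarantee that the second and third integrals converge absolutely. For the first integral we use the fact that the product is absolutely integrable on each compact subinterval of $(0,1]$ (the integrand $\sin(\lambda\xi)/\xi$ is bounded by $\lambda$), so the splitting itself is legitimate. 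The left-hand side is then understood as the sum of these three pieces, which is the natural meaning under the stated hypotheses.

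For the first term we substitute $s=\lambda\xi$, which gives $c\int_0^{\lambda}\frac{\sin s}{s}\dd s$. By the classical Dirichlet integral $\int_0^\infty \frac{\sin s}{s}\dd s=\frac{\pi}{2}$, taken as an improper Riemann integral, this converges to $\frac{\pi}{2}c$ as $\lambda\to+\infty$.

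For the second and third terms, the functions $\mathbf 1_{(0,1)}(\xi)\,(h(\xi)-c)/\xi$ and $\mathbf 1_{(1,\infty)}(\xi)\,h(\xi)/\xi$ lie in $L^1(0,\infty)$ by assumption. We write $\sin(\lambda\xi)=\operatorname{Im}e^{i\lambda\xi}$ and apply the one-dimensional Riemann--Lebesgue lemma on the half-line, which is exactly the statement used inside the proof of Lemma~\ref{lem:RL}, with $G$ equal to each of these functions. Both integrals therefore tend to $0$.

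Adding the three limits yields $\frac{\pi}{2}h(0^+)$. There is no genuine obstacle in this argument. The only point needing care is that $h(\xi)/\xi$ need not be integrable near $0$, since $c$ may be nonzero. This is precisely why we subtract $c$ on $(0,1)$ and handle the constant part separately through the improper Dirichlet integral, rather than applying Riemann--Lebesgue directly to $h(\xi)/\xi$.
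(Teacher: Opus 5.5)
Your proof is correct and follows the paper's argument: split at $\xi=1$, subtract $h(0^+)$ on $(0,1)$, rescale the constant part to $\int_0^\lambda \frac{\sin s}{s}\dd s\to\frac{\pi}{2}$, and send the two remaining pieces to $0$ with the one-dimensional Riemann--Lebesgue lemma. Your caveat about how to interpret the left-hand side is not needed: since $\lvert\sin(\lambda\xi)/\xi\rvert\le\lambda$, the full integrand is absolutely integrable on $(0,\infty)$ for each fixed $\lambda$.
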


\begin{proof}
We split the integral at $\xi=1$. Since $h(\xi)/\xi\in L^1(1,\infty)$, the Riemann--Lebesgue lemma gives
$$
\int_1^\infty \frac{\sin(\lambda\xi)}{\xi}h(\xi)\,\dd\xi \longrightarrow0.
$$
On $(0,1)$, we write
$$
\int_0^1 \frac{\sin(\lambda\xi)}{\xi}h(\xi)\,\dd\xi = h(0^+) \int_0^1\frac{\sin(\lambda\xi)}{\xi}\,\dd\xi+ \int_0^1 \sin(\lambda\xi) \frac{h(\xi)-h(0^+)}{\xi}\,\dd\xi.
$$
After the change of variables $s=\lambda\xi$, the first integral becomes
$$
\int_0^\lambda\frac{\sin s}{s}\,\dd s \longrightarrow\frac{\pi}{2}.
$$
The second integral tends to zero by the {one-dimensional} Riemann--Lebesgue lemma, since $(h(\xi)-h(0^+))/\xi\in L^1(0,1)$. Combining the three contributions, we obtain the result.
\end{proof}

We now record a first-order formula for the critical logarithmic integral.

\begin{lemma}[Logarithmic lemma, first order]\label{lem:log}
Let $g:(0,\infty)\to[0,\infty)$ be bounded {and measurable}, with $g(0^+):=\lim_{r\to0^+}g(r)$ existing and $\int_1^\infty g(r)r^{-1}\dd r<\infty$. Then
\[
\int_0^\infty\frac{\sin^2(tr)}{r}\,g(r)\dd r
=\frac{g(0^+)}{2}\,\log t+o(\log t),\qquad t\to\infty.
\]
\end{lemma}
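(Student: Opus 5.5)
We split the integral at $r=1$ and treat the two pieces separately. On $(1,\infty)$ we use $\sin^2\le 1$, which gives
\[
0\le\int_1^\infty\frac{\sin^2(tr)}{r}\,g(r)\dd r\le\int_1^\infty\frac{g(r)}{r}\dd r<\infty .
\]
This piece is therefore $O(1)=o(\log t)$. All the work is on $(0,1)$, where we write $g=g(0^+)+(g-g(0^+))$. For the constant part, the substitution $s=tr$ and the identity $\sin^2 s=\tfrac12-\tfrac12\cos(2s)$ give
\[
\int_0^1\frac{\sin^2(tr)}{r}\dd r=\int_0^t\frac{\sin^2 s}{s}\dd s=\frac12\log t+O(1),\qquad t\ge 2 .
\]
To see the $O(1)$, split the $s$-integral at $s=1$. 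On $(0,1)$ we have $\sin^2 s/s\le s$, so that part is bounded. On $(1,t)$ the term $\int_1^t\cos(2s)\,s^{-1}\dd s$ converges as $t\to\infty$ (Dirichlet test, or one integration by parts), so it is also bounded. This produces the main term $\frac{g(0^+)}{2}\log t$.

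The remaining step, and the only delicate one, is to show that
\[
\int_0^1\frac{\sin^2(tr)}{r}\,\bigl(g(r)-g(0^+)\bigr)\dd r=o(\log t).
\]
This is delicate because we have no rate for $g(r)\to g(0^+)$: $(g-g(0^+))/r$ need not be integrable near $0$, so Riemann--Lebesgue is not available. We use an $\varepsilon$-splitting instead. Fix $\varepsilon>0$ and choose $\delta\in(0,1)$ with $\abs{g(r)-g(0^+)}\le\varepsilon$ for $0<r<\delta$. On $(0,\delta)$ the integral is bounded in absolute value by
\[
\varepsilon\int_0^{1}\frac{\sin^2(tr)}{r}\dd r=\varepsilon\Bigl(\tfrac12\log t+O(1)\Bigr),
\]
using the computation above. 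On $(\delta,1)$, boundedness of $g$ gives the bound $2\norm{g}_\infty\log(1/\delta)$, which is independent of $t$.

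Dividing by $\log t$ and letting $t\to\infty$, the $\limsup$ of the absolute value of the ratio is at most $\varepsilon/2$. Since $\varepsilon>0$ is arbitrary, the error term is $o(\log t)$. Adding the three contributions yields the lemma. Nonnegativity of $g$ is not actually needed; boundedness, the existence of $g(0^+)$, and the tail condition suffice.
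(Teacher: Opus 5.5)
Your proof is correct and follows the paper's argument essentially step for step: you split at $r=1$, evaluate $\int_0^t s^{-1}\sin^2 s\,\dd s=\frac12\log t+O(1)$, and control $g-g(0^+)$ by an $\varepsilon$--$\delta$ splitting with a $t$-independent bound of order $\norm{g}_\infty\log(1/\delta)$ on $(\delta,1)$. Your closing aside that nonnegativity of $g$ is unnecessary is fine, provided the tail hypothesis is then read as $\int_1^\infty\abs{g(r)}r^{-1}\dd r<\infty$, since your estimate on $(1,\infty)$ uses $g\ge0$.
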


\begin{proof}
The contribution of $(1,\infty)$ is bounded by $\int_1^\infty g(r)r^{-1}\dd r$, hence it is $O(1)$. On $(0,1)$, we substitute $s=tr$ and use $\sin^2 s=\tfrac12(1-\cos2s)$. For $t\ge1$, we get
\begin{equation}\label{eq:sinc-log}
\int_0^1\frac{\sin^2(tr)}{r}\dd r=\int_0^t\frac{\sin^2 s}{s}\dd s
=\frac12\log t+O(1),
\end{equation}
because $\int_0^1\frac{1-\cos 2s}{2s}\dd s<\infty$ and $\int_1^t\frac{\cos 2s}{2s}\dd s$ converges as $t\to\infty$. Let $\varepsilon>0$, and choose $\delta\in(0,1)$ such that $\abs{g(r)-g(0^+)}\le\varepsilon$ on $(0,\delta)$. Using \eqref{eq:sinc-log}, we obtain
\[
\Bigl\lvert\int_0^1\frac{\sin^2(tr)}{r}\bigl(g(r)-g(0^+)\bigr)\dd r\Bigr\rvert
\le\varepsilon\Bigl(\frac12\log t+C\Bigr)
+\bigl(\norm{g}_\infty+\abs{g(0^+)}\bigr)\log\frac1\delta .
\]
Dividing by $\log t$ and letting first $t\to\infty$ and then $\varepsilon\to0$, we obtain the claim.
\end{proof}

The next lemma refines Lemma~\ref{lem:log} to second order. It is the main oscillatory-integral input in the proof of Theorem~\ref{th:secondorder2}. Under a Dini-type condition at the origin, the remainder left after subtracting the leading logarithmic term converges, and its limit can be computed explicitly.

\begin{lemma}[Logarithmic lemma, second order]\label{lem:log2}
Let $g:(0,\infty)\to[0,\infty)$ be bounded and measurable, with $g(0^+)$ existing, $\int_1^\infty g(r)r^{-1}\dd r<\infty$, and
\begin{equation}\label{eq:dini}
\frac{g(r)-g(0^+)}{r}\in L^1(0,1).
\end{equation}
Then, as $t\to\infty$,
\begin{equation}\label{eq:log2}
\int_0^\infty\frac{\sin^2(tr)}{r}\,g(r)\dd r
=\frac{g(0^+)}{2}\log t+\frac{g(0^+)}{2}\log 2
+\frac12\int_0^\infty\frac{g(r)-g(0^+)e^{-r}}{r}\dd r+o(1),
\end{equation}
the last integral being absolutely convergent.
\end{lemma}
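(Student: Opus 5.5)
Write $g_0:=g(0^+)$ and split
\[
g(r)=g_0e^{-r}+\bigl(g(r)-g_0e^{-r}\bigr)=:g_0e^{-r}+h(r).
\]
Both pieces are then handled separately. The model piece $g_0e^{-r}$ can be computed exactly. The remainder $h$ is integrable against $dr/r$, so its oscillatory part vanishes by Riemann--Lebesgue.

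First, I check that $h(r)/r\in L^1(0,\infty)$. On $(0,1)$ we have
\[
h(r)=\bigl(g(r)-g_0\bigr)+g_0\bigl(1-e^{-r}\bigr).
\]
The first term is integrable against $dr/r$ by the Dini condition \eqref{eq:dini}. The second is $O(r)$. On $(1,\infty)$, the bound $\abs{h}\le g+g_0e^{-r}$ together with $\int_1^\infty g\,r^{-1}\dd r<\infty$ gives integrability. This proves the absolute convergence claimed in \eqref{eq:log2}.

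Next, using $\sin^2(tr)=\tfrac12-\tfrac12\cos(2tr)$,
\[
\int_0^\infty\frac{\sin^2(tr)}{r}h(r)\dd r=\frac12\int_0^\infty\frac{h(r)}{r}\dd r-\frac12\int_0^\infty\frac{h(r)}{r}\cos(2tr)\dd r.
\]
The last term tends to $0$ by the one-dimensional Riemann--Lebesgue lemma (or Lemma~\ref{lem:RL} with $N=1$ after taking real parts). This produces exactly the term $\tfrac12\int_0^\infty (g-g_0e^{-r})r^{-1}\dd r+o(1)$.

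It remains to compute the model integral
\[
J(t):=\int_0^\infty\frac{\sin^2(tr)}{r}e^{-r}\dd r=\frac12\int_0^\infty\frac{1-\cos(2tr)}{r}e^{-r}\dd r .
\]
This is a Frullani-type integral. Differentiating in $t$ under the integral sign (justified by the $e^{-r}$ factor) gives
\[
J'(t)=\int_0^\infty\sin(2tr)e^{-r}\dd r=\frac{2t}{1+4t^2},
\]
and $J(0)=0$. Integrating, $J(t)=\tfrac14\log(1+4t^2)$. Hence
\[
J(t)=\frac12\log t+\frac12\log2+O(t^{-2}),
\]
and multiplying by $g_0$ yields the first two terms of \eqref{eq:log2}. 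Adding the two contributions proves the lemma.

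The only point requiring care is the verification that $h(r)/r$ is integrable near $0$. The exponential regularization is chosen precisely so that this reduces to the Dini hypothesis, while the model integral stays explicit. Everything else is a routine application of Riemann--Lebesgue and the computation of $J$. Boundedness of $g$ is used only to make all integrals finite on compact sets.
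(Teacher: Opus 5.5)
Your proof is correct and follows essentially the same route as the paper. Both split $g=g(0^+)e^{-r}+\bigl(g-g(0^+)e^{-r}\bigr)$, apply Riemann--Lebesgue to the remainder, and evaluate $\int_0^\infty r^{-1}\sin^2(tr)e^{-r}\dd r=\tfrac14\log(1+4t^2)$ explicitly. The paper obtains this evaluation via Frullani's identity and mentions your differentiation argument as an alternative.
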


\begin{proof}
We write $\sin^2(tr)=\tfrac12\bigl(1-\cos(2tr)\bigr)$, so that
\[
2\int_0^\infty\frac{\sin^2(tr)}{r}g(r)\dd r
=\int_0^\infty\frac{1-\cos(2tr)}{r}\,g(r)\dd r
=:\mathcal I_1(t)+\mathcal I_2(t),
\]
where, with $a:=g(0^+)$,
\[
\mathcal I_1(t):=a\int_0^\infty\frac{1-\cos(2tr)}{r}e^{-r}\dd r,
\qquad
\mathcal I_2(t):=\int_0^\infty\frac{1-\cos(2tr)}{r}
\bigl(g(r)-ae^{-r}\bigr)\dd r .
\]
For $\mathcal I_1$, we use the classical formula, valid for every $\lambda\in\R$,
\begin{equation}\label{eq:frullani}
\int_0^\infty\frac{1-\cos(\lambda r)}{r}e^{-r}\dd r
=\tfrac12\log\bigl(1+\lambda^2\bigr).
\end{equation}
It follows by taking real parts in the Frullani identity $\int_0^\infty r^{-1}\bigl(e^{-r}-e^{-(1-i\lambda)r}\bigr)\dd r=\log(1-i\lambda)$ {(alternatively, both sides of \eqref{eq:frullani} vanish at $\lambda=0$ and have the same derivative $\lambda/(1+\lambda^2)$)}. With $\lambda=2t$, this gives
\[
\mathcal I_1(t)=\frac a2\log\bigl(1+4t^2\bigr)
=a\log t+a\log2+o(1),\qquad t\to\infty .
\]
For $\mathcal I_2$, we set $\Psi(r):=r^{-1}\bigl(g(r)-ae^{-r}\bigr)$. Near the origin, $g(r)-ae^{-r}=\bigl(g(r)-a\bigr)+a\bigl(1-e^{-r}\bigr)$, so that \eqref{eq:dini} and $\abs{1-e^{-r}}\le r$ give $\Psi\in L^1(0,1)$. On $(1,\infty)$, we have $\abs{\Psi(r)}\le g(r)r^{-1}+\abs{a}e^{-r}r^{-1}$, which is integrable. Hence $\Psi\in L^1(0,\infty)$ and, by the Riemann--Lebesgue lemma,
\[
\mathcal I_2(t)=\int_0^\infty\Psi(r)\dd r
-\int_0^\infty\Psi(r)\cos(2tr)\dd r
=\int_0^\infty\Psi(r)\dd r+o(1).
\]
Adding the two contributions and dividing by $2$, we obtain \eqref{eq:log2}.
\end{proof}

\begin{remark}\rm[Normalization of the constant]\label{rem:normalization}
The reference profile $e^{-r}$ in Lemma~\ref{lem:log2} is not canonical. It may be replaced by any measurable function $\varphi$ satisfying
$$
\varphi(0^+)=1, \qquad \frac{\varphi(r)-1}{r}\in L^1(0,1), \qquad \frac{\varphi(r)}{r}\in L^1(1,\infty).
$$
The two constant terms in \eqref{eq:log2} then change separately, but their sum remains the same. We use $\varphi(r)=e^{-r}$ because the corresponding reference integral \eqref{eq:frullani} is explicit.
\end{remark}

Finally, we record the elementary inequality
\begin{equation}\label{eq:ab}
\abs{a+b}^2 \geq \frac12\abs a^2-\abs b^2, \qquad a,b\in\mathbb C,
\end{equation}
{which follows from $\abs a^2\le2\abs{a+b}^2+2\abs b^2$}. It will be used to isolate the contribution of the initial velocity in the proof of the unbounded part of Theorem~\ref{th:dichotomy}.

\section{The boundedness dichotomy}\label{sec:dichotomy}

\begin{proof}[Proof of Theorem \ref{th:dichotomy}]
We use the Fourier representation \eqref{eq:fourier-solution} with $s=0$ and abbreviate $\mathcal A=\mathcal A_0$, $\mathcal B=\mathcal B_0$ and $\mathcal D=\mathcal D_0$.

\smallskip
\noindent{Step 1: Sufficiency and the limit \eqref{eq:equipartition}.} Assume that $u_1\in\dH^{-1}(\R^N)$, and set
$$
F_1(\xi):= \frac{\abs{\hat u_1(\xi)}^2}{\abs{\xi}^2}, \qquad F_0(\xi):= \abs{\hat u_0(\xi)}^2.
$$
Then $F_1,F_0\in L^1(\R^N)$. Moreover, the inequality $2ab\le a^2+b^2$ gives
$$
\frac{ \abs{\hat u_1(\xi)} \abs{\hat u_0(\xi)} }{ \abs{\xi} } \leq \frac12\left(F_1(\xi)+F_0(\xi)\right),
$$
and hence $\hat u_1(\xi)\overline{\hat u_0(\xi)}/\abs{\xi}\in L^1(\R^N)$. Using $\sin^2(t\abs{\xi}) = \frac12\left(1-\cos(2t\abs{\xi})\right)$ and Lemma~\ref{lem:RL}, we obtain
$$
\mathcal A(t) = \frac12\int_{\R^N}F_1(\xi)\,\dd\xi - \frac12\int_{\R^N} F_1(\xi)\cos(2t\abs{\xi})\,\dd\xi\longrightarrow \frac12\int_{\R^N}F_1(\xi)\,\dd\xi.
$$
Similarly, since $\cos^2(t\abs{\xi}) = \frac12\left(1+\cos(2t\abs{\xi})\right)$, we get
$$
\mathcal B(t) \longrightarrow \frac12\int_{\R^N}F_0(\xi)\,\dd\xi.
$$
Another application of Lemma~\ref{lem:RL} gives $\mathcal D(t)\to0$. Dividing \eqref{eq:expansion} by $(2\pi)^N$ and using Plancherel's identity, we conclude that
$$
\lim_{t\to\infty}\norm{u(t,\cdot)}^2 = \frac12\left( \norm{u_1}_{\dH^{-1}}^2+\norm{u_0}^2 \right),
$$
which is \eqref{eq:equipartition}. The uniform bound can also be read off directly from \eqref{eq:fourier-solution}. Indeed, since $\abs{\sin(t\abs{\xi})}\leq1$ and $\abs{\cos(t\abs{\xi})}\leq1$, we have
$$
\norm{u(t,\cdot)} \leq \norm{ \abs{\nabla}^{-1}\sin(t\abs{\nabla})u_1 } + \norm{ \cos(t\abs{\nabla})u_0 } \leq \norm{u_1}_{\dH^{-1}}+\norm{u_0},
$$
and therefore $\sup_{t\geq0}\norm{u(t,\cdot)}<\infty$.

\smallskip
\noindent{Step 2: Necessity.} Assume that $u_1\notin\dH^{-1}(\R^N)$, so that
$$
\int_{\R^N}F_1(\xi)\,\dd\xi=+\infty, \qquad F_1(\xi):= \frac{\abs{\hat u_1(\xi)}^2}{\abs{\xi}^2}.
$$
We apply \eqref{eq:ab} pointwise to \eqref{eq:fourier-solution}, with
$$
a= \frac{\sin(t\abs{\xi})}{\abs{\xi}}\hat u_1(\xi), \qquad b= \cos(t\abs{\xi})\hat u_0(\xi),
$$
and we integrate over $\R^N$. We obtain
\begin{equation}\label{eq:lower-crude}
(2\pi)^N\norm{u(t,\cdot)}^2 \geq \frac12\mathcal A(t)-\norm{\hat u_0}_{L^2}^2.
\end{equation}
For $m>0$, we define $F_1^{(m)}(\xi) := \min\{F_1(\xi),m\} \mathbf 1_{\{\abs{\xi}\leq m\}}$. Then $F_1^{(m)}\in L^1(\R^N)$, $0\leq F_1^{(m)}\leq F_1$, and $F_1^{(m)}\uparrow F_1$ almost everywhere as $m\to\infty$. The monotone convergence theorem therefore gives
$$
\int_{\R^N}F_1^{(m)}(\xi)\,\dd\xi \longrightarrow+\infty \qquad\text{as }m\to\infty.
$$
Since $F_1\geq F_1^{(m)}\geq0$, we have
$$
\mathcal A(t) \geq \int_{\R^N} \sin^2(t\abs{\xi})F_1^{(m)}(\xi)\,\dd\xi.
$$
For each fixed $m>0$, the identity $\sin^2(t\abs{\xi}) = \frac12\left(1-\cos(2t\abs{\xi})\right)$ and Lemma~\ref{lem:RL} imply that
$$
\begin{aligned}
\int_{\R^N} \sin^2(t\abs{\xi})F_1^{(m)}(\xi)\,\dd\xi &= \frac12\int_{\R^N}F_1^{(m)}(\xi)\,\dd\xi- \frac12\int_{\R^N} F_1^{(m)}(\xi)\cos(2t\abs{\xi})\,\dd\xi\\
&\longrightarrow \frac12\int_{\R^N}F_1^{(m)}(\xi)\,\dd\xi
\end{aligned}
$$
as $t\to\infty$. Consequently,
$$
\liminf_{t\to\infty}\mathcal A(t) \geq \frac12\int_{\R^N}F_1^{(m)}(\xi)\,\dd\xi
$$
for every $m>0$. Letting $m\to\infty$, we deduce that $\mathcal A(t)\to+\infty$ as $t\to\infty$. It now follows from \eqref{eq:lower-crude} that
$$
\norm{u(t,\cdot)}\longrightarrow+\infty \qquad\text{as }t\to\infty.
$$
This proves both the necessity of the condition $u_1\in\dH^{-1}(\R^N)$ and the divergence in the complementary case.
\end{proof}

\begin{remark}\rm\label{rem:P-neq-0}
Suppose that $u_1\in L^1(\R^N)\cap L^2(\R^N)$. Then $\hat u_1$ is continuous and $\hat u_1(0)=P$. If $P\neq0$, continuity implies that $\abs{\hat u_1(\xi)}$ is bounded away from zero in a neighborhood of the origin. When $N\leq2$, it follows that
$$
\int_{\abs{\xi}\leq\rho} \frac{\abs{\hat u_1(\xi)}^2}{\abs{\xi}^2}\,\dd\xi = +\infty
$$
for every {$\rho>0$}. Thus $u_1\notin\dH^{-1}(\R^N)$ and, by Theorem~\ref{th:dichotomy}, $M(t)\to+\infty$ as $t\to\infty$. For $N\geq3$, on the contrary, $L^1(\R^N)\cap L^2(\R^N) \subset\dH^{-1}(\R^N)$, and hence $M(t)$ is uniformly bounded. This recovers the qualitative dimensional dichotomy underlying \cite[Theorems~1.1--1.2]{Ikehata2023}, without the weighted first-moment condition $u_1\in L^{1,1}(\R^N)$.
\end{remark}

\begin{remark}\rm[No universal divergence rate]\label{rem:norate}
Theorem~\ref{th:dichotomy} asserts that $M(t)\to+\infty$ whenever $u_1\notin\dH^{-1}(\R^N)$, but it gives no quantitative rate. No universal rate is possible under the energy-space assumptions alone, because the divergence is determined by the precise behavior of $\hat u_1(\xi)$ near $\xi=0$. In dimension one, a profile $\abs{\hat u_1(\xi)}\approx\abs{\xi}^{\alpha}$ near the origin with $0\leq\alpha<1/2$ produces $M(t)\asymp t^{1/2-\alpha}$, while the critical order $\alpha=1/2$ may produce logarithmic growth. Slowly varying corrections allow divergence slower than any positive power; see Problem~\ref{pb:RV}. Thus membership in $\dH^{-1}$ decides between boundedness and divergence, but not the rate. Under the additional hypotheses of Theorem~\ref{th:profiles}, the condition $P\neq0$ determines the exact first-order growth.
\end{remark}

\section{Exact first-order profiles}\label{sec:profiles}

We first establish the closed form \eqref{eq:kappa} of the constant $\kappa_{N,s}$.

\begin{lemma}\label{lem:kappa}
Let $N\ge1$ and $\tfrac{N-2}{2}<s<\tfrac N2$, and set $\mu:=3+2s-N\in(1,3)$. Then the integral
\[
\kappa_{N,s}=\int_{\R^N}\frac{\sin^2\abs{\eta}}{\abs{\eta}^{2+2s}}\dd\eta
\]
in \eqref{eq:kappa} converges, and
\[
\kappa_{N,s}=\sigma_{N-1}\int_0^\infty\frac{\sin^2r}{r^{\mu}}\dd r
=-\frac{2^{\mu-3}\pi\,\sigma_{N-1}}{\Gamma(\mu)\cos(\pi\mu/2)}
=\frac{2^{\mu-3}\pi\,\sigma_{N-1}}{\Gamma(\mu)\abs{\cos(\pi\mu/2)}} .
\]
\end{lemma}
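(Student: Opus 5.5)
The proof has three steps: reduce to a one-dimensional integral in polar coordinates, check convergence from the range of $\mu$, and evaluate that integral by writing it as a Mellin transform of $1-\cos 2r$.

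\textbf{Step 1: polar coordinates.} Since the integrand is radial and nonnegative, Tonelli's theorem gives
\[
\kappa_{N,s}=\sigma_{N-1}\int_0^\infty \sin^2 r\, r^{N-1-2-2s}\dd r=\sigma_{N-1}\int_0^\infty\frac{\sin^2 r}{r^{\mu}}\dd r .
\]
For $N=1$ the convention $\sigma_0=2$ accounts for the two half-lines.

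\textbf{Step 2: convergence.} Near $0$ we have $\sin^2r/r^\mu\sim r^{2-\mu}$, which is integrable because $\mu<3$. Near $\infty$ the integrand is bounded by $r^{-\mu}$, which is integrable because $\mu>1$. Hence $\kappa_{N,s}$ is finite. It is also strictly positive, since the integrand is positive on a set of positive measure.

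\textbf{Step 3: evaluation.} I write $\sin^2 r=\tfrac12(1-\cos 2r)$ and substitute $x=2r$:
\[
\int_0^\infty\frac{\sin^2r}{r^\mu}\dd r=2^{\mu-2}\int_0^\infty\frac{1-\cos x}{x^{\mu}}\dd x .
\]
For $1<\mu<3$ I use the classical formula
\[
\int_0^\infty x^{-\mu}(1-\cos x)\dd x=-\Gamma(1-\mu)\cos\frac{\pi(1-\mu)}{2}.
\]
To justify it, first take $1<\mu<2$. Integrating by parts gives
\[
\frac{1}{\mu-1}\int_0^\infty x^{1-\mu}\sin x\dd x ,
\]
and the boundary terms vanish in this range. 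The standard Mellin formula
\[
\int_0^\infty x^{a-1}\sin x\dd x=\Gamma(a)\sin\frac{\pi a}{2},\qquad a=2-\mu\in(0,1),
\]
then gives the value $\Gamma(2-\mu)\sin\bigl(\pi(2-\mu)/2\bigr)/(\mu-1)$. Next, both sides are analytic in $\mu$ on the strip $1<\Real\mu<3$: the integral converges locally uniformly there. The right-hand side is analytic there because $\Gamma(1-\mu)$ has its only pole in this strip at $\mu=2$, where $\cos(\pi(1-\mu)/2)$ vanishes, so the singularity is removable. By analytic continuation the formula holds on all of $(1,3)$.

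To reach the stated form, I use the reflection formula
\[
\Gamma(1-\mu)\Gamma(\mu)=\frac{\pi}{\sin \pi\mu},
\]
together with $\cos\bigl(\pi(1-\mu)/2\bigr)=\sin(\pi\mu/2)$ and $\sin\pi\mu=2\sin(\pi\mu/2)\cos(\pi\mu/2)$. These give
\[
-\Gamma(1-\mu)\sin\frac{\pi\mu}{2}=-\frac{\pi}{2\Gamma(\mu)\cos(\pi\mu/2)}.
\]
Multiplying by $2^{\mu-2}\sigma_{N-1}$ produces
\[
-\frac{2^{\mu-3}\pi\sigma_{N-1}}{\Gamma(\mu)\cos(\pi\mu/2)} .
\]

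Finally, on $(1,3)$ we have $\cos(\pi\mu/2)\le 0$, so the absolute-value form follows. At $\mu=2$ both sides are read as limits: $\cos(\pi\mu/2)$ vanishes there, and the correct value is $\int_0^\infty \sin^2 r/r^2\dd r=\pi/2$, so continuity in $\mu$ fills this case. I expect the main obstacle to be exactly this point, namely justifying the Mellin formula uniformly across $\mu=2$, which is what the analytic-continuation argument in Step 3 is for.
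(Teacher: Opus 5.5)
Your argument is correct and is essentially the paper's proof. Both use polar coordinates, the half-angle identity with $x=2r$, a Mellin-type formula for $\int_0^\infty x^{-\mu}(1-\cos x)\,dx$ extended by analytic continuation, the reflection formula, and the sign of $\cos(\pi\mu/2)$. The only difference is how you justify the Mellin formula:
\begin{itemize}
\item the paper continues $\int_0^\infty y^{\nu-1}\cos y\,dy=\Gamma(\nu)\cos(\pi\nu/2)$ from $0<\nu<1$ to $-2<\nu<0$, citing Gradshteyn--Ryzhik;
\item you integrate by parts to reach the sine transform on $1<\mu<2$, then continue in $\mu$ across the strip.
\end{itemize}
Your route is slightly more self-contained. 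It also correctly identifies the removable singularity of $\Gamma(1-\mu)\cos(\pi(1-\mu)/2)$ at $\mu=2$.

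Your last paragraph contains an error that you should correct. $\cos(\pi\mu/2)$ does not vanish at $\mu=2$; it equals $\cos\pi=-1$. In fact $\cos(\pi\mu/2)<0$ strictly on all of $(1,3)$, since $\pi\mu/2\in(\pi/2,3\pi/2)$. The strict inequality is what the absolute-value form actually needs; your ``$\le 0$'' is not enough. If the cosine did vanish at $\mu=2$, the closed form would blow up near $\mu=2$ instead of giving $\pi\sigma_{N-1}/2$, so your paragraph as written contradicts itself.

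The degeneracy at $\mu=2$ sits only in the reflection step, where $\Gamma(1-\mu)$ has a pole and $\sin\pi\mu=0$. So carry out that step for $\mu\neq2$. Then either pass to the limit, since both sides are continuous in $\mu$, or evaluate the closed form directly at $\mu=2$. There it equals $\pi\sigma_{N-1}/2=\sigma_{N-1}\int_0^\infty r^{-2}\sin^2 r\,dr$, with no limiting procedure needed.
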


\begin{proof}
By polar coordinates and the identity $N-3-2s=-\mu$, we have
$$
\kappa_{N,s} = \sigma_{N-1}\int_{0}^{\infty} \sin^{2}r\,r^{N-3-2s}\,\dd r = \sigma_{N-1}\int_{0}^{\infty} \frac{\sin^{2}r}{r^{\mu}}\,\dd r.
$$
As $r\to0$, the estimate $\sin^{2}r=O(r^{2})$ gives $r^{-\mu}\sin^{2}r=O(r^{2-\mu})$. Since $\mu<3$, we have $2-\mu>-1$, and the integral converges near $r=0$. On the other hand, $0\leq r^{-\mu}\sin^{2}r\leq r^{-\mu}$, and since $\mu>1$, the integral also converges at infinity. Using $\sin^{2}r=\frac12(1-\cos(2r))$ and the change of variables $y=2r$, we obtain
$$
\int_{0}^{\infty}\frac{\sin^{2}r}{r^{\mu}}\,\dd r = \frac12\int_{0}^{\infty} \frac{1-\cos(2r)}{r^{\mu}}\,\dd r = 2^{\mu-2}\int_{0}^{\infty} \frac{1-\cos y}{y^{\mu}}\,\dd y.
$$
For $-2<\nu<0$, the classical identity
$$
\int_{0}^{\infty}y^{\nu-1}(\cos y-1)\,\dd y = \Gamma(\nu)\cos\left(\frac{\pi\nu}{2}\right)
$$
holds, the right-hand side being understood by continuous extension at $\nu=-1$ {(it follows from the formula $\int_0^\infty y^{\nu-1}\cos y\,\dd y=\Gamma(\nu)\cos(\pi\nu/2)$, $0<\nu<1$, by analytic continuation; see, e.g., \cite[Section~3.76]{GR2007})}. Taking $\nu=1-\mu\in(-2,0)$, we get
$$
\int_{0}^{\infty}\frac{1-\cos y}{y^{\mu}}\,\dd y = -\Gamma(1-\mu) \cos\left(\frac{\pi(1-\mu)}{2}\right).
$$
Since $\cos\left(\frac{\pi(1-\mu)}{2}\right) =\sin\left(\frac{\pi\mu}{2}\right)$, the reflection formula $\Gamma(\mu)\Gamma(1-\mu) = \pi/\sin(\pi\mu)$ yields, for $\mu\neq2$,
$$
\int_{0}^{\infty}\frac{1-\cos y}{y^{\mu}}\,\dd y = -\frac{\pi\sin(\pi\mu/2)} {\Gamma(\mu)\sin(\pi\mu)} = -\frac{\pi} {2\Gamma(\mu)\cos(\pi\mu/2)}.
$$
Both sides extend continuously to $\mu=2$, where $\int_{0}^{\infty}y^{-2}(1-\cos y)\,\dd y = \pi/2$. Thus the identity holds for every $\mu\in(1,3)$, and we deduce that
$$
\int_{0}^{\infty}\frac{\sin^{2}r}{r^{\mu}}\,\dd r = -\frac{2^{\mu-3}\pi} {\Gamma(\mu)\cos(\pi\mu/2)}.
$$
Multiplying by $\sigma_{N-1}$, we obtain the first closed form of $\kappa_{N,s}$. Finally, since $\mu\in(1,3)$, we have $\frac{\pi\mu}{2}\in(\frac{\pi}{2},\frac{3\pi}{2})$, and hence $\cos(\pi\mu/2)<0$. This gives the second closed form and completes the proof.
\end{proof}

\begin{remark}\rm
For $N=1$ and $s=0$, we have $\mu=2$, $\sigma_0=2$ and $\kappa_{1,0}=\pi$, consistently with $\int_\R\eta^{-2}\sin^2\eta\,\dd\eta=\pi$. For $N=3$ and $s=1$, we find $\mu=2$ and $\kappa_{3,1}=2\pi^2$. Note that $\kappa_{N,s}\to+\infty$ as $s\downarrow s_c$ or $s\uparrow N/2$, in agreement with the degeneration of the polynomial regime at both endpoints.
\end{remark}

\begin{proof}[Proof of Theorem \ref{th:profiles}]
Since $u_1\in L^1(\R^N)$, its Fourier transform $\widehat u_1$ is bounded and continuous. Moreover, $\left|\widehat u_1(\xi)\right| \leq \norm{u_1}_{L^1}$ for every $\xi\in\R^N$, and $\widehat u_1(0) = P$.

\smallskip
\noindent{Step 1: The term $\mathcal B_s$ is bounded.} Since $0\leq s<\frac{N}{2}$ and $u_0\in\dH^{-s}(\R^N)$, we have
\begin{equation}\label{eq:B-bounded}
0 \leq \mathcal B_s(t) \leq \int_{\R^N} |\xi|^{-2s} |\widehat u_0(\xi)|^2 \,\dd\xi = (2\pi)^N \norm{u_0}_{\dH^{-s}}^2 < \infty
\end{equation}
for every $t\geq0$. In particular, $\mathcal B_s(t)=O(1)$ as $t\to\infty$.

If one assumes in addition that $u_0\in L^1(\R^N)$, then the condition $u_0\in\dH^{-s}(\R^N)$ is automatic. Indeed, we have
$$
\int_{|\xi|\leq1} |\xi|^{-2s} |\widehat u_0(\xi)|^2 \,\dd\xi \leq \norm{u_0}_{L^1}^2 \int_{|\xi|\leq1}|\xi|^{-2s}\,\dd\xi < \infty,
$$
because $2s<N$, whereas
$$
\int_{|\xi|\geq1} |\xi|^{-2s} |\widehat u_0(\xi)|^2 \,\dd\xi \leq \norm{\widehat u_0}_{L^2}^2 < \infty.
$$

\smallskip
\noindent{Step 2: The polynomial regime $\theta>0$.} The condition $\theta>0$ is equivalent to $\frac{N-2}{2}<s<\frac{N}{2}$. The change of variables $\xi=\eta/t$ in \eqref{eq:A} gives, for every $t>0$,
\begin{equation}\label{eq:scaling}
\mathcal A_s(t) = t^\theta \int_{\R^N} \frac{\sin^2|\eta|}{|\eta|^{2+2s}} \left| \widehat u_1\left(\frac{\eta}{t}\right) \right|^2 \,\dd\eta .
\end{equation}
Indeed, the scaling factor is $t^{2+2s-N}=t^\theta$. For every $\eta\in\R^N$ and $t>0$, we have $\left| \widehat u_1(\eta/t) \right|^2 \leq \norm{u_1}_{L^1}^2$. {Therefore, the integrand in \eqref{eq:scaling} is dominated by $\norm{u_1}_{L^1}^2\abs{\eta}^{-2-2s}\sin^2\abs{\eta}$, uniformly in $t>0$.} By Lemma~\ref{lem:kappa}, this dominating function belongs to $L^1(\R^N)$ in the range $\frac{N-2}{2}<s<\frac{N}{2}$. For every fixed $\eta\in\R^N$, the continuity of $\widehat u_1$ at the origin gives $\widehat u_1(\eta/t) \to \widehat u_1(0) = P$ as $t\to\infty$, and consequently $\left| \widehat u_1(\eta/t) \right|^2 \to |P|^2$. The dominated convergence theorem therefore yields
$$
t^{-\theta}\mathcal A_s(t) \longrightarrow |P|^2 \int_{\R^N} \frac{\sin^2|\eta|}{|\eta|^{2+2s}} \,\dd\eta = \kappa_{N,s}|P|^2,
$$
that is,
\begin{equation}\label{eq:A-poly}
\mathcal A_s(t) = \kappa_{N,s}|P|^2t^\theta + o(t^\theta).
\end{equation}
By Step~1, $\mathcal B_s(t)=O(1)=o(t^\theta)$. Moreover, the Cauchy--Schwarz estimate \eqref{eq:cross-CS} gives
$$
|\mathcal D_s(t)| \leq {2}\,\mathcal A_s(t)^{1/2} \mathcal B_s(t)^{1/2}.
$$
Since $\mathcal A_s(t)=O(t^\theta)$ and $\mathcal B_s(t)=O(1)$, we obtain $\mathcal D_s(t) = O(t^{\theta/2}) = o(t^\theta)$. Substituting these estimates into \eqref{eq:expansion}, we conclude that
$$
\norm{u(t,\cdot)}_{\dH^{-s}}^2 = \frac{\kappa_{N,s}}{(2\pi)^N} |P|^2t^\theta + o(t^\theta),
$$
which is equivalent to part~{\rm (i)}.

\smallskip
\noindent{Step 3: The logarithmic regime $\theta=0$.} In this case, $s=s_c=\frac{N-2}{2}$. Since $s\geq0$, this regime occurs only when $N\geq2$. Recall that
$$
g(r) = \int_{\mathbb S^{N-1}} |\widehat u_1(r\omega)|^2 \,\dd\omega.
$$
Passing to polar coordinates in \eqref{eq:A}, we obtain
$$
\mathcal A_{s_c}(t) = \int_0^\infty \sin^2(tr)\, r^{N-1-(2s_c+2)} g(r)\,\dd r.
$$
Since $N-1-(2s_c+2)=-1$, it follows that
\begin{equation}\label{eq:A-polar}
\mathcal A_{s_c}(t) = \int_0^\infty \frac{\sin^2(tr)}{r} g(r)\,\dd r.
\end{equation}
The function $g$ is measurable and bounded, since $0\leq g(r) \leq \sigma_{N-1}\norm{u_1}_{L^1}^2$. Furthermore, $\widehat u_1(r\omega) \to P$ as $r\downarrow0$ for every $\omega\in\mathbb S^{N-1}$. By dominated convergence on $\mathbb S^{N-1}$, we get $g(r)\to\sigma_{N-1}|P|^2$, that is, $g(0^+)=\sigma_{N-1}|P|^2$. Since $N\geq2$, we have $r^{-1}\leq r^{N-1}$ for $r\geq1$. Consequently,
\begin{equation}\label{eq:g-tail}
\int_1^\infty\frac{g(r)}{r}\,\dd r \leq \int_1^\infty g(r)r^{N-1}\,\dd r \leq \int_0^\infty g(r)r^{N-1}\,\dd r = \int_{\R^N} |\widehat u_1(\xi)|^2 \,\dd\xi = \norm{\widehat u_1}_{L^2}^2 < \infty.
\end{equation}
All the assumptions of Lemma~\ref{lem:log} are therefore satisfied. Applying that lemma to \eqref{eq:A-polar}, we obtain
$$
\mathcal A_{s_c}(t) = \frac12g(0^+)\log t + o(\log t) = \frac{\sigma_{N-1}}{2} |P|^2\log t + o(\log t).
$$
By Step~1, $\mathcal B_{s_c}(t) = O(1) = o(\log t)$. Moreover, \eqref{eq:cross-CS} implies that
$$
|\mathcal D_{s_c}(t)| \leq {2}\,\mathcal A_{s_c}(t)^{1/2} \mathcal B_{s_c}(t)^{1/2} = O\bigl(\sqrt{\log t}\bigr) = o(\log t).
$$
Using \eqref{eq:expansion}, we conclude that
$$
\norm{u(t,\cdot)}_{\dH^{-s_c}}^2 = \frac{\sigma_{N-1}}{2(2\pi)^N} |P|^2\log t + o(\log t),
$$
which proves part~{\rm (ii)}.

\smallskip
\noindent{Step 4: The convergent regime $\theta<0$.} The condition $\theta<0$ is equivalent to $2s+2<N$. Using the boundedness of $\widehat u_1$, we obtain
$$
\int_{|\xi|\leq1} \frac{|\widehat u_1(\xi)|^2}{|\xi|^{2s+2}} \,\dd\xi \leq \norm{u_1}_{L^1}^2 \int_{|\xi|\leq1} |\xi|^{-2s-2} \,\dd\xi < \infty,
$$
because $2s+2<N$. On the other hand, since $|\xi|^{-2s-2}\leq1$ for $|\xi|\geq1$, we have
$$
\int_{|\xi|\geq1} \frac{|\widehat u_1(\xi)|^2}{|\xi|^{2s+2}} \,\dd\xi \leq \int_{|\xi|\geq1} |\widehat u_1(\xi)|^2 \,\dd\xi \leq \norm{\widehat u_1}_{L^2}^2 < \infty.
$$
It follows that $u_1\in\dH^{-s-1}(\R^N)$, while $u_0\in\dH^{-s}(\R^N)$ by assumption. Using $\sin^2(t|\xi|) = \frac12 \left( 1-\cos(2t|\xi|) \right)$, we obtain
$$
\mathcal A_s(t) = \frac12 \int_{\R^N} |\xi|^{-2s-2} |\widehat u_1(\xi)|^2 \,\dd\xi - \frac12 \int_{\R^N} |\xi|^{-2s-2} |\widehat u_1(\xi)|^2 \cos(2t|\xi|) \,\dd\xi.
$$
Similarly, using $\cos^2(t|\xi|) = \frac12 \left( 1+\cos(2t|\xi|) \right)$, we get
$$
\mathcal B_s(t) = \frac12 \int_{\R^N} |\xi|^{-2s} |\widehat u_0(\xi)|^2 \,\dd\xi + \frac12 \int_{\R^N} |\xi|^{-2s} |\widehat u_0(\xi)|^2 \cos(2t|\xi|) \,\dd\xi.
$$
The functions $F_1(\xi) := |\xi|^{-2s-2} |\widehat u_1(\xi)|^2$ and $F_0(\xi) := |\xi|^{-2s} |\widehat u_0(\xi)|^2$ belong to $L^1(\R^N)$. Therefore, Lemma~\ref{lem:RL} yields
$$
\int_{\R^N} F_1(\xi)\cos(2t|\xi|) \,\dd\xi \longrightarrow0
\qquad\text{and}\qquad
\int_{\R^N} F_0(\xi)\cos(2t|\xi|) \,\dd\xi \longrightarrow0.
$$
Consequently,
$$
\mathcal A_s(t) \longrightarrow \frac{(2\pi)^N}{2} \norm{u_1}_{\dH^{-s-1}}^2
\qquad\text{and}\qquad
\mathcal B_s(t) \longrightarrow \frac{(2\pi)^N}{2} \norm{u_0}_{\dH^{-s}}^2.
$$
For the mixed term, we define $F_{01}(\xi) := |\xi|^{-2s-1} \operatorname{Re} \left( \widehat u_1(\xi) \overline{\widehat u_0(\xi)} \right)$. By Young's inequality, we have
$$
|F_{01}(\xi)| \leq |\xi|^{-2s-1} |\widehat u_1(\xi)| |\widehat u_0(\xi)| \leq \frac12 |\xi|^{-2s-2} |\widehat u_1(\xi)|^2 + \frac12 |\xi|^{-2s} |\widehat u_0(\xi)|^2 .
$$
The right-hand side belongs to $L^1(\R^N)$, and hence $F_{01}\in L^1(\R^N)$. Since
$$
\mathcal D_s(t)=\int_{\R^N}F_{01}(\xi)\sin(2t\abs{\xi})\,\dd\xi,
$$
Lemma~\ref{lem:RL} gives $\mathcal D_s(t)\to0$. Finally, substituting the limits of $\mathcal A_s$, $\mathcal B_s$ and $\mathcal D_s$ into \eqref{eq:expansion}, we obtain
$$
\lim_{t\to\infty} \norm{u(t,\cdot)}_{\dH^{-s}}^2 = \frac12 \left( \norm{u_1}_{\dH^{-s-1}}^2 + \norm{u_0}_{\dH^{-s}}^2 \right).
$$
This proves part~{\rm (iii)} and completes the proof.
\end{proof}

\begin{proof}[Proof of Corollary \ref{cor:L2}]
Take $s=0$, so that $\theta=2-N$ and $\dH^{-s}=L^2$. {In particular, the assumption $u_0\in\dH^{0}(\R^N)=L^2(\R^N)$ of Theorem~\ref{th:profiles} is satisfied.} For $N=1$, we have $\theta=1>0$ and $\kappa_{1,0}/(2\pi)=\pi/(2\pi)=1/2$. For $N=2$, we have $\theta=0$ and $\sigma_1/(2(2\pi)^2)=2\pi/(8\pi^2)=1/(4\pi)$. For $N\ge3$, we have $\theta<0$, and part~(iii) of Theorem~\ref{th:profiles} reduces to \eqref{eq:equipartition}.
\end{proof}

\begin{remark}\rm\label{rem:sharp-const}
{Corollary~\ref{cor:L2} shows that, when $P\neq0$, the two-sided bounds of \cite[Theorems~1.1--1.2]{Ikehata2023} collapse into the asymptotic equalities
$$
M(t)=\frac{\abs{P}}{\sqrt2}\,\sqrt t\,\bigl(1+o(1)\bigr)\quad(N=1),
\qquad
M(t)=\frac{\abs{P}}{2\sqrt\pi}\,\sqrt{\log t}\,\bigl(1+o(1)\bigr)\quad(N=2).
$$
In particular, the optimal constant $C_2$ in the lower bounds of \eqref{eq:ike1} and \eqref{eq:ike2} is $1/\sqrt2$ and $1/(2\sqrt\pi)$, respectively, and the upper bounds hold asymptotically with $C_1I_{0,N}$ replaced by the same multiple of $\abs{P}$.} It also shows that, at leading order, the growth is insensitive to $u_0$ and to all moments of $u_1$ of positive order. Only the total mass $P$ survives.
\end{remark}

\begin{remark}\rm[Higher dimensions]\label{rem:higher-d}
Let $N\geq3$ and $s=s_c=\frac{N-2}{2}$. Then Theorem~\ref{th:profiles}(ii) gives
$$
\norm{u(t,\cdot)}_{\dH^{-(N-2)/2}}^2 = \frac{\sigma_{N-1}}{2(2\pi)^N} |P|^2\log t + o(\log t)
$$
as $t\to\infty$. In particular, when $N=3$, we have $s_c=\frac12$ and $\sigma_2=4\pi$, and hence
$$
\norm{u(t,\cdot)}_{\dH^{-1/2}}^2 = \frac{|P|^2}{4\pi^2}\log t + o(\log t).
$$
When $N=4$, we have $s_c=1$ and $\sigma_3=2\pi^2$, so that
$$
\norm{u(t,\cdot)}_{\dH^{-1}}^2 = \frac{|P|^2}{16\pi^2}\log t + o(\log t).
$$
Thus, whenever $P\neq0$, the logarithmic growth found in dimension two in \cite{Ikehata2023} also occurs in every dimension $N\geq3$, provided that the solution is measured in the scaling-critical homogeneous Sobolev space $\dH^{-(N-2)/2}(\R^N)$. Example~\ref{ex:3dgauss} makes this explicit for Gaussian data in $\R^3$. There, the $L^2$-norm converges, whereas the $\dH^{-1/2}$-norm diverges logarithmically.
\end{remark}

\begin{remark}\rm[Interpolation is lossy]\label{rem:interp}
It is tempting to deduce the logarithmic regime by interpolating between the polynomial and the convergent ones. This cannot work. Interpolating the bounds $\norm{u(t)}_{\dH^{-s}}^2\lesssim t^{\theta(s)}$ for $s>s_c$ and $\norm{u(t)}_{\dH^{-s}}^2\lesssim 1$ for $s<s_c$ gives, at $s=s_c$, only $\norm{u(t)}_{\dH^{-s_c}}^2\lesssim_\varepsilon t^{\varepsilon}$ for every $\varepsilon>0$. The logarithm, and a fortiori its constant, is a genuine boundary effect, which requires the direct argument of Lemma~\ref{lem:log}.
\end{remark}

\section{Second-order expansions}\label{sec:secondorder}

We begin this section with an integrability property of $\abs{\hat u_1}^2-P^2$ near the origin in dimension one.

\begin{lemma}\label{lem:taylor}
Let $N=1$, let $u_1\in L^{1,1}(\R)$ be real-valued, and set
$$
P:=\int_\R u_1(x)\,\dd x,\qquad m_1:=\int_\R\abs{x}\abs{u_1(x)}\,\dd x .
$$
Then the function $F(\xi):=\xi^{-2}\bigl(|\widehat u_1(\xi)|^2-P^2\bigr)$ belongs to $L^1(\R)$, and
\begin{equation}\label{eq:F-L1}
\norm{F}_{L^1(\R)}\le 2\pi\norm{u_1}_{L^1}m_1+2m_1^2+4\norm{u_1}_{L^1}^2 .
\end{equation}
If, in addition, $u_1\in L^{1,2}(\R)$, then $\abs{F(\xi)}\le \norm{u_1}_{L^1}m_2+m_1^2$ for every $\xi\neq0$, where $m_2:=\int_\R x^2\abs{u_1(x)}\,\dd x$.
\end{lemma}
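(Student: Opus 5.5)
We split $\R$ into the region $\abs{\xi}\ge1$, where $F$ is trivially integrable, and the region $\abs{\xi}<1$, where we use a first-order Taylor bound on $\hat u_1$ at the origin; the constant $4\norm{u_1}_{L^1}^2$ in \eqref{eq:F-L1} comes from the first region. For $\abs{\xi}\ge1$ we use $\bigl\lvert\abs{\hat u_1(\xi)}^2-P^2\bigr\rvert\le \abs{\hat u_1(\xi)}^2+P^2\le 2\norm{u_1}_{L^1}^2$. Since $\int_{\abs{\xi}\ge1}\xi^{-2}\dd\xi=2$, this region contributes at most $4\norm{u_1}_{L^1}^2$.

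For the low frequencies, $u_1$ is real, so $\hat u_1(-\xi)=\overline{\hat u_1(\xi)}$. We write $\hat u_1(\xi)=P+\rho(\xi)$ with
\[
\rho(\xi)=\int_\R\bigl(e^{-ix\xi}-1\bigr)u_1(x)\dd x .
\]
The inequality $\abs{e^{-ix\xi}-1}\le\min\{\abs{x\xi},2\}$ gives $\abs{\rho(\xi)}\le m_1\abs{\xi}$. We then expand
\[
\abs{\hat u_1}^2-P^2=2P\,\Real\rho(\xi)+\abs{\rho(\xi)}^2 .
\]
The quadratic piece satisfies $\abs{\rho}^2/\xi^2\le m_1^2$, so it contributes at most $2m_1^2$ on $\abs{\xi}<1$, which matches the middle constant in \eqref{eq:F-L1}.

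The main obstacle is the linear piece $2P\,\Real\rho(\xi)/\xi^2$. The bound $\abs{\Real\rho}\le m_1\abs{\xi}$ only gives $\abs{\xi}^{-1}$, which is not integrable at the origin. Two tools handle this.
\begin{enumerate}
\item We have $\Real\rho(\xi)=\int(\cos(x\xi)-1)u_1\,\dd x$, and $\abs{1-\cos(x\xi)}\le\min\{x^2\xi^2/2,\,2\}$.
\item We interchange integrals with Tonelli: $\int_{\abs{\xi}<1}\xi^{-2}\abs{1-\cos(x\xi)}\dd\xi\le\int_\R\xi^{-2}(1-\cos(x\xi))\dd\xi=\pi\abs{x}$. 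This is exactly the constant $\kappa_{1,0}=\pi$ from the remark after Lemma~\ref{lem:kappa}, rescaled.
\end{enumerate}
Combining these, the linear piece is bounded by $2\abs{P}\int_\R\pi\abs{x}\abs{u_1(x)}\dd x\le 2\pi\norm{u_1}_{L^1}m_1$. This produces the first constant in \eqref{eq:F-L1}; this is where $u_1\in L^{1,1}$, rather than just a rate for $\rho$, is genuinely used.

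For the pointwise bound under $u_1\in L^{1,2}(\R)$, we use the same decomposition without integrating in $\xi$. First, $\abs{\Real\rho(\xi)}\le\frac{\xi^2}{2}m_2$, so $\abs{2P\Real\rho}/\xi^2\le\abs{P}m_2\le\norm{u_1}_{L^1}m_2$. Second, $\abs{\rho}^2/\xi^2\le m_1^2$. Together these give $\abs{F(\xi)}\le\norm{u_1}_{L^1}m_2+m_1^2$ for every $\xi\ne0$.
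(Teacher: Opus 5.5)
Your proof is correct and follows essentially the paper's argument: the same split at $\abs{\xi}=1$, the same Tonelli bound $\int_{\abs{\xi}\le1}\xi^{-2}\bigl(1-\cos(x\xi)\bigr)\dd\xi\le\pi\abs{x}$ for the cosine part, and the same pointwise bounds $m_1\abs{\xi}$ and $m_2\xi^2/2$. The only difference is cosmetic: you group the numerator as $\abs{\widehat u_1}^2-P^2=2P\,\Real\rho+\abs{\rho}^2$, whereas the paper writes $\widehat u_1=X-iY$ and uses $(X-P)(X+P)+Y^2$; both groupings give the same constants.
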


\begin{proof}
Since $u_1$ is real-valued, we may write $\widehat u_1(\xi) = X(\xi)-iY(\xi)$, where
$$
X(\xi) := \int_\R \cos(x\xi)u_1(x)\,\dd x
\qquad\text{and}\qquad
Y(\xi) := \int_\R \sin(x\xi)u_1(x)\,\dd x
$$
are real. Hence $|\widehat u_1(\xi)|^2-P^2 = \bigl(X(\xi)-P\bigr)\bigl(X(\xi)+P\bigr)+Y(\xi)^2$, with $\abs{X(\xi)+P}\le2\norm{u_1}_{L^1}$.

We first consider $\abs{\xi}\le1$. Since
$$
\abs{X(\xi)-P}=\left| \int_\R \bigl(\cos(x\xi)-1\bigr) u_1(x)\,\dd x \right| \le \int_\R \bigl(1-\cos(x\xi)\bigr)\abs{u_1(x)}\,\dd x,
$$
Tonelli's theorem gives
$$
\int_{\abs{\xi}\le1}\frac{\abs{X(\xi)-P}}{\xi^2}\,\dd\xi
\le\int_\R\abs{u_1(x)}\left(\int_{-1}^{1}\frac{1-\cos(x\xi)}{\xi^2}\,\dd\xi\right)\dd x .
$$
For $x\neq0$, the change of variables $y=\abs{x}\xi$ shows that the inner integral equals
$$
2\abs{x}\int_0^{\abs{x}}\frac{1-\cos y}{y^2}\,\dd y\le2\abs{x}\int_0^{\infty}\frac{1-\cos y}{y^2}\,\dd y=\pi\abs{x}.
$$
We infer that $\int_{\abs{\xi}\le1}\xi^{-2}\abs{X(\xi)^2-P^2}\,\dd\xi\le2\pi\norm{u_1}_{L^1}m_1$. Moreover, $\abs{\sin\vartheta}\le\abs\vartheta$ gives $\abs{Y(\xi)}\le\abs{\xi}m_1$, so that $\int_{\abs{\xi}\le1}\xi^{-2}Y(\xi)^2\,\dd\xi\le2m_1^2$. For $\abs{\xi}\ge1$, we use $\abs{\widehat u_1(\xi)}\le\norm{u_1}_{L^1}$ and $\abs P\le\norm{u_1}_{L^1}$, and we get $\abs{F(\xi)}\le2\norm{u_1}_{L^1}^2\xi^{-2}$, whose integral over $\{\abs\xi\ge1\}$ equals $4\norm{u_1}_{L^1}^2$. Adding these three bounds, we obtain \eqref{eq:F-L1}.

If $u_1\in L^{1,2}(\R)$, the inequality $1-\cos\vartheta\le\vartheta^2/2$ gives $\abs{X(\xi)-P}\le\frac{\xi^2}{2}m_2$, and hence $\abs{X(\xi)^2-P^2}\le\xi^2\norm{u_1}_{L^1}m_2$. Together with $Y(\xi)^2\le\xi^2m_1^2$, this yields the pointwise bound.
\end{proof}

\begin{remark}\rm[Sharpness of the moment condition]\label{rem:moment-sharp}
Since $u_1$ is real-valued, the linear term in the expansion of $|\widehat u_1(\xi)|^2$ at $\xi=0$ cancels. More precisely, if $u_1\in L^{1,2}(\R)$ and $M_1:=\int_{\R}x u_1(x)\,\dd x$, then $\widehat u_1(\xi) = P-iM_1\xi+O(\xi^2)$, and therefore $|\widehat u_1(\xi)|^2 = P^2+O(\xi^2)$. Lemma~\ref{lem:taylor} shows that the integrability of $\xi^{-2}\bigl(|\widehat u_1(\xi)|^2-P^2\bigr)$, which is all that the proof of Theorem~\ref{th:secondorder1} requires, holds under the weaker assumption $u_1\in L^{1,1}(\R)$. This assumption cannot be weakened in general. Indeed, let $u_1\ge0$ be even, with $P>0$. Then $Y\equiv0$ and $0\le P-X(\xi)\to0$ as $\xi\to0$, so that $\abs{F(\xi)}\ge P\,(P-X(\xi))\,\xi^{-2}$ for $\abs\xi\le\delta$ and some $\delta\in(0,1]$. The computation in the proof of Lemma~\ref{lem:taylor} then gives
$$
\int_{\abs\xi\le\delta}\abs{F(\xi)}\,\dd\xi\ \ge\ 2P\int_\R \abs{x}\,u_1(x)\left(\int_0^{\delta\abs x}\frac{1-\cos y}{y^2}\,\dd y\right)\dd x
\ \ge\ 2P\,c_\delta\int_{\abs x\ge1}\abs{x}\,u_1(x)\,\dd x,
$$
with $c_\delta:=\int_0^{\delta}y^{-2}(1-\cos y)\,\dd y>0$. Hence, for such data, $F\in L^1(\R)$ if and only if $u_1\in L^{1,1}(\R)$. Example~\ref{ex:cauchy}, for which $u_1\in L^{1,\alpha}(\R)$ for every $\alpha<1$ but $u_1\notin L^{1,1}(\R)$, shows that the conclusion of Theorem~\ref{th:secondorder1} itself fails in this borderline case: the second-order term is then $-\pi\log t$ instead of a constant.
\end{remark}

\begin{proof}[Proof of Theorem \ref{th:secondorder1}]
We use \eqref{eq:expansion} with $N=1$ and $s=0$.

\smallskip
\noindent{The term $\mathcal A_0$.} The change of variables $\eta=t\xi$ and the classical identity $\int_\R\eta^{-2}\sin^2\eta\,\dd\eta=\pi$ give
$$
\int_\R\frac{\sin^2(t\xi)}{\xi^2}\,\dd\xi = t\int_\R\frac{\sin^2\eta}{\eta^2}\,\dd\eta = \pi t.
$$
Since $\sin^2(t|\xi|)=\sin^2(t\xi)$, we get
$$
\mathcal A_0(t) = \int_\R \frac{\sin^2(t\xi)}{\xi^2} |\widehat u_1(\xi)|^2 \,\dd\xi = \pi P^2t + \int_\R \frac{\sin^2(t\xi)}{\xi^2} \left( |\widehat u_1(\xi)|^2-P^2 \right) \,\dd\xi.
$$
By Lemma~\ref{lem:taylor}, the function $F(\xi) := \xi^{-2}\bigl(|\widehat u_1(\xi)|^2-P^2\bigr)$ belongs to $L^1(\R)$. Since $\sin^2(t\xi) = \frac12 \bigl( 1-\cos(2t\xi) \bigr)$, we obtain
$$
\int_\R F(\xi)\sin^2(t\xi) \,\dd\xi = \frac12\int_\R F(\xi)\,\dd\xi - \frac12\int_\R F(\xi)\cos(2t\xi)\,\dd\xi.
$$
The Riemann--Lebesgue lemma gives $\int_\R F(\xi)\cos(2t\xi)\,\dd\xi \to0$. Consequently,
$$
\mathcal A_0(t) = \pi P^2t + \frac12 \int_\R \frac{|\widehat u_1(\xi)|^2-P^2}{\xi^2} \,\dd\xi + o(1).
$$

\smallskip
\noindent{The term $\mathcal B_0$.} Since $u_0\in L^2(\R)$, Plancherel's theorem gives $|\widehat u_0|^2\in L^1(\R)$. Using $\cos^2(t|\xi|) = \frac12 \bigl( 1+\cos(2t|\xi|) \bigr)$ and the fact that $\cos(2t\abs\xi)=\cos(2t\xi)$, we have
$$
\mathcal B_0(t) = \int_\R \cos^2(t|\xi|) |\widehat u_0(\xi)|^2 \,\dd\xi = \frac12 \norm{\widehat u_0}_{L^2}^2 + \frac12 \int_\R |\widehat u_0(\xi)|^2 \cos(2t\xi) \,\dd\xi.
$$
The Riemann--Lebesgue lemma yields $\int_\R |\widehat u_0(\xi)|^2 \cos(2t\xi) \,\dd\xi \to0$, and therefore $\mathcal B_0(t) = \frac12\norm{\widehat u_0}_{L^2}^2 + o(1)$. With our normalization, Plancherel's identity reads $\norm{\widehat u_0}_{L^2}^2 = 2\pi\norm{u_0}_{L^2}^2$, and we conclude that
$$
\mathcal B_0(t) = \pi\norm{u_0}_{L^2}^2 + o(1).
$$

\smallskip
\noindent{The term $\mathcal D_0$.} We define $h(\xi) := \operatorname{Re} \bigl( \widehat u_1(\xi) \overline{\widehat u_0(\xi)} \bigr)$. Since $u_0$ and $u_1$ are real-valued, $\widehat u_j(-\xi) = \overline{\widehat u_j(\xi)}$, and hence $h$ is real-valued and even. Moreover, $h(0) = \widehat u_1(0)\widehat u_0(0) = PQ$. Using $2\sin(t|\xi|)\cos(t|\xi|) = \sin(2t|\xi|)$, we obtain
$$
\mathcal D_0(t) = \int_\R \frac{\sin(2t|\xi|)}{|\xi|} h(\xi) \,\dd\xi = 2\int_0^\infty \frac{\sin(2t\xi)}{\xi} h(\xi) \,\dd\xi.
$$
Since {$u_1,u_0\in L^{1,1}(\R)$}, the Fourier transforms $\widehat u_1$ and $\widehat u_0$ are continuously differentiable with bounded derivatives. It follows that $h\in C^1(\R)$, so that $(h(\xi)-h(0))/\xi$ is bounded on $(0,1)$, hence integrable there. Furthermore, the Cauchy--Schwarz inequality gives
$$
\begin{aligned}
\int_1^\infty\frac{|h(\xi)|}{\xi}\,\dd\xi &\leq \int_1^\infty \frac{ |\widehat u_1(\xi)| |\widehat u_0(\xi)| }{\xi} \,\dd\xi \leq \norm{\widehat u_1}_{L^2(1,\infty)} \norm{\xi^{-1}\widehat u_0}_{L^2(1,\infty)}\\
&\leq \norm{\widehat u_1}_{L^2(\R)} \norm{\widehat u_0}_{L^2(\R)} < \infty.
\end{aligned}
$$
Thus the assumptions of Lemma~\ref{lem:dirichlet} are satisfied, and that lemma gives
$$
\int_0^\infty \frac{\sin(2t\xi)}{\xi} h(\xi) \,\dd\xi \longrightarrow \frac{\pi}{2}h(0).
$$
We deduce that $\mathcal D_0(t) \to \pi PQ$.

\smallskip
\noindent{Conclusion.} By \eqref{eq:expansion}, $\norm{u(t,\cdot)}_{L^2}^2 = \frac{1}{2\pi} \left( \mathcal A_0(t) + \mathcal B_0(t) + \mathcal D_0(t) \right)$. Substituting the preceding asymptotic formulas, we obtain
$$
\begin{aligned}
\norm{u(t,\cdot)}_{L^2}^2 &= \frac{1}{2\pi} \left[ \pi P^2t + \frac12 \int_\R \frac{|\widehat u_1(\xi)|^2-P^2}{\xi^2} \,\dd\xi + \pi\norm{u_0}_{L^2}^2 + \pi PQ \right] + o(1) \\
&= \frac{P^2}{2}\,t + \frac{1}{4\pi} \int_\R \frac{|\widehat u_1(\xi)|^2-P^2}{\xi^2} \,\dd\xi + \frac12\norm{u_0}_{L^2}^2 + \frac{PQ}{2} + o(1).
\end{aligned}
$$
This proves \eqref{eq:secondorder1}.
\end{proof}

We now turn to the proof of Theorem~\ref{th:secondorder2}.

\begin{proof}[Proof of Theorem \ref{th:secondorder2}]
We use \eqref{eq:expansion} with $s=s_c=\frac{N-2}{2}$. In particular, $2s_c+2=N$ and $2s_c=N-2$.

\smallskip
\noindent{The term $\mathcal A_{s_c}$.} By \eqref{eq:A-polar}, we have $\mathcal A_{s_c}(t) = \int_0^\infty r^{-1}\sin^2(tr)\, g(r)\,\dd r$, and we check the hypotheses of Lemma~\ref{lem:log2}. Since $u_1\in L^1(\R^N)$, its Fourier transform is bounded and continuous, with $\left|\widehat u_1(\xi)\right| \leq \norm{u_1}_{L^1}$ and $\widehat u_1(0)=P$. It follows that $g$ is measurable and bounded, with $0\leq g(r) \leq \sigma_{N-1}\norm{u_1}_{L^1}^2$. Moreover, dominated convergence on the sphere gives
$$
g(0^+) = \int_{\Sph^{N-1}} \left|\widehat u_1(0)\right|^2 \,\dd\omega = \sigma_{N-1}P^2.
$$
Since $u_1\in L^{1,1}(\R^N)$, we have
$$
\left|\widehat u_1(\xi)-P\right| = \left| \int_{\R^N} \left( e^{-ix\cdot\xi}-1 \right) u_1(x)\,\dd x \right| \leq |\xi| \int_{\R^N} |x||u_1(x)|\,\dd x.
$$
Using $\bigl| |\widehat u_1(\xi)|^2-P^2 \bigr| \leq \left|\widehat u_1(\xi)-P\right| \left( |\widehat u_1(\xi)|+|P| \right)$ and $|\widehat u_1(\xi)|+|P| \leq 2\norm{u_1}_{L^1}$, we obtain
$$
\bigl| |\widehat u_1(\xi)|^2-P^2 \bigr| \leq C|\xi|,
$$
where $C$ depends only on $\norm{u_1}_{L^1}$ and on the first absolute moment of $u_1$. Consequently, for $0<r<1$, we have
$$
|g(r)-g(0^+)| \leq \int_{\Sph^{N-1}} \bigl| |\widehat u_1(r\omega)|^2-P^2 \bigr| \,\dd\omega \leq C\sigma_{N-1}r .
$$
Hence $(g(r)-g(0^+))/r$ is bounded, and therefore integrable, on $(0,1)$. At infinity, the bound \eqref{eq:g-tail} gives $\int_1^\infty g(r)r^{-1}\,\dd r<\infty$. In particular, the integral $\int_0^\infty r^{-1}\bigl(g(r)-g(0^+)e^{-r}\bigr)\,\dd r$ converges absolutely, as shown in the proof of Lemma~\ref{lem:log2}. Lemma~\ref{lem:log2} now gives
$$
\mathcal A_{s_c}(t) = \frac{g(0^+)}{2}\log t + \frac{g(0^+)}{2}\log2 + \frac12 \int_0^\infty \frac{ g(r)-g(0^+)e^{-r} }{r} \,\dd r + o(1).
$$
Since $g(0^+)=\sigma_{N-1}P^2$, we conclude that
$$
\mathcal A_{s_c}(t) = \frac{\sigma_{N-1}P^2}{2}\log t + \frac{\sigma_{N-1}P^2}{2}\log2 + \frac12 \int_0^\infty \frac{ g(r)-\sigma_{N-1}P^2e^{-r} }{r} \,\dd r + o(1).
$$

\smallskip
\noindent{The term $\mathcal B_{s_c}$.} Since $2s_c=N-2<N$ and $u_0\in L^1(\R^N)\cap L^2(\R^N)$, Step~1 of the proof of Theorem~\ref{th:profiles} shows that $|\xi|^{-2s_c} |\widehat u_0(\xi)|^2 \in L^1(\R^N)$. Using $\cos^2(t|\xi|) = \frac12 \left( 1+\cos(2t|\xi|) \right)$ and Lemma~\ref{lem:RL}, we obtain
$$
\mathcal B_{s_c}(t) \longrightarrow \frac12 \int_{\R^N} |\xi|^{-2s_c} |\widehat u_0(\xi)|^2 \,\dd\xi = \frac{(2\pi)^N}{2} \norm{u_0}_{\dH^{-s_c}}^2 .
$$

\smallskip
\noindent{The term $\mathcal D_{s_c}$.} We set $h(\xi) := \operatorname{Re} \bigl( \widehat u_1(\xi) \overline{\widehat u_0(\xi)} \bigr)$ and
$$
H(r) := \int_{\Sph^{N-1}} h(r\omega)\,\dd\omega .
$$
Since $2s_c+1=N-1$, polar coordinates give
$$
\mathcal D_{s_c}(t) = \int_0^\infty r^{N-1-(N-1)} \sin(2tr)H(r)\,\dd r = \int_0^\infty \sin(2tr)H(r)\,\dd r.
$$
We claim that $H\in L^1(0,\infty)$. Indeed,
$$
\int_0^\infty|H(r)|\,\dd r \leq \int_0^\infty \int_{\Sph^{N-1}} |\widehat u_1(r\omega)| |\widehat u_0(r\omega)| \,\dd\omega\,\dd r = \int_{\R^N} \frac{ |\widehat u_1(\xi)| |\widehat u_0(\xi)| }{ |\xi|^{N-1} } \,\dd\xi .
$$
We split the last integral into the regions $|\xi|\leq1$ and $|\xi|\geq1$. On $\{|\xi|\leq1\}$, the boundedness of the Fourier transforms gives
$$
\int_{|\xi|\leq1} \frac{ |\widehat u_1(\xi)| |\widehat u_0(\xi)| }{ |\xi|^{N-1} } \,\dd\xi \leq \norm{u_1}_{L^1} \norm{u_0}_{L^1} \int_{|\xi|\leq1} |\xi|^{-(N-1)} \,\dd\xi = \sigma_{N-1} \norm{u_1}_{L^1} \norm{u_0}_{L^1} .
$$
On $\{|\xi|\geq1\}$, we have $|\xi|^{-(N-1)}\leq1$, and the Cauchy--Schwarz inequality gives
$$
\int_{|\xi|\geq1} \frac{ |\widehat u_1(\xi)| |\widehat u_0(\xi)| }{ |\xi|^{N-1} } \,\dd\xi \leq \norm{\widehat u_1}_{L^2} \norm{\widehat u_0}_{L^2} < \infty.
$$
Therefore $H\in L^1(0,\infty)$, and the one-dimensional Riemann--Lebesgue lemma gives $\mathcal D_{s_c}(t)\to0$.

\smallskip
\noindent{Conclusion.} By \eqref{eq:expansion}, we have
$$
\norm{u(t,\cdot)}_{\dH^{-s_c}}^2 = \frac{1}{(2\pi)^N} \left( \mathcal A_{s_c}(t) + \mathcal B_{s_c}(t) + \mathcal D_{s_c}(t) \right).
$$
Combining the preceding asymptotic formulas, we obtain
$$
\norm{u(t,\cdot)}_{\dH^{-s_c}}^2 = \frac{\sigma_{N-1}P^2}{2(2\pi)^N} \log t + \mathcal C_2 + o(1),
$$
where
$$
\mathcal C_2 = \frac{1}{2(2\pi)^N} \left[ \sigma_{N-1}P^2\log2 + \int_0^\infty \frac{ g(r)-\sigma_{N-1}P^2e^{-r} }{r} \,\dd r \right] + \frac12 \norm{u_0}_{\dH^{-s_c}}^2 .
$$
This proves \eqref{eq:secondorder2} and \eqref{eq:C2}.

Finally, when $N=2$, we have $s_c=0$, $\sigma_1=2\pi$ and $(2\pi)^N=4\pi^2$. Hence $\frac{\sigma_1P^2}{2(2\pi)^2} = \frac{P^2}{4\pi}$ and $\frac{1}{2(2\pi)^2} = \frac{1}{8\pi^2}$. Substituting these values into \eqref{eq:secondorder2}, we obtain \eqref{eq:secondorder2d}.
\end{proof}

\begin{remark}\rm\label{rem:cross-vanishes}
It is worth contrasting the two second-order results. In dimension one, the cross term $\mathcal D_0$ contributes the nonzero amount $\pi PQ$, so that the initial position $u_0$ enters $\mathcal C_1$ both through $\norm{u_0}^2$ and through its total mass $Q$. At critical regularity in dimension $N\ge2$, by contrast, the cross term vanishes in the limit, and $u_0$ enters $\mathcal C_2$ only through $\norm{u_0}_{\dH^{-s_c}}^2$. The reason is purely dimensional. The weight $\abs{\xi}^{-2s_c-1}=\abs{\xi}^{1-N}$ is exactly compensated by the volume element $\abs{\xi}^{N-1}$, so that $\mathcal D_{s_c}$ becomes a plain Fourier sine transform of an $L^1$ function, with no boundary contribution at the origin.
\end{remark}

\begin{remark}\rm\label{rem:weaker-moment}
The moment assumption $u_1\in L^{1,1}(\R^N)$ in Theorem~\ref{th:secondorder2} is used only to verify the Dini condition \eqref{eq:dini} for $g$. Since $\abs{e^{-ix\cdot\xi}-1}\le\min\{2,\abs x\abs\xi\}\le2^{1-\delta}\abs{x}^\delta\abs{\xi}^\delta$ for $0<\delta\le1$, we have $\abs{\widehat u_1(\xi)-P}\le2^{1-\delta}\abs\xi^\delta\int_{\R^N}\abs x^\delta\abs{u_1(x)}\,\dd x$. Hence $\abs{g(r)-g(0^+)}\le Cr^\delta$ for $0<r<1$, and the proof applies verbatim if $u_1\in L^{1,\delta}(\R^N)\cap L^2(\R^N)$ for some $\delta\in(0,1]$.
\end{remark}

\section{Examples}\label{sec:examples}

In this section, we illustrate our results by six explicit examples. All the numerical values below have been checked numerically to at least six significant digits.

\begin{example}[The box in $\R$: an exactly solvable case]
\label{ex:ikehata}
Let $N=1$, $u_0\equiv0$ and $u_1=2\cdot\mathbf 1_{[-1,1]}$, so that $P=4$ and $Q=0$. A direct computation from d'Alembert's formula shows that, for {$t\ge1$}, the solution equals $2$ on $\{\abs{x}\le t-1\}$, is equal to $t+1-\abs{x}$ on $\{t-1<\abs{x}<t+1\}$, and vanishes elsewhere. We infer that
\begin{equation}\label{eq:exact-example}
\norm{u(t,\cdot)}^2=8(t-1)+\frac{16}{3}=8t-\frac83,\qquad {t\ge1} .
\end{equation}
On the other hand, $\hat u_1(\xi)=4\sin\xi/\xi$, so that $P^2/2=8$ and
\[
\mathcal C_1=\frac{1}{4\pi}\int_\R
\frac{16\frac{\sin^2\xi}{\xi^2}-16}{\xi^2}\dd\xi
=\frac{4}{\pi}\int_\R\frac{\sin^2\xi-\xi^2}{\xi^4}\dd\xi .
\]
Two integrations by parts, together with $\int_0^\infty(1-\cos2x)x^{-2}\dd x=\pi$, give $\int_\R(\sin^2\xi-\xi^2)\xi^{-4}\dd\xi=-\tfrac{2\pi}{3}$, and hence $\mathcal C_1=-\tfrac83$. Theorem~\ref{th:secondorder1} therefore predicts $\norm{u(t,\cdot)}^2=8t-\tfrac83+o(1)$, in exact agreement with \eqref{eq:exact-example}. In fact, the $o(1)$ term vanishes identically for {$t\ge1$}. Note that the asymptotic profile $u(t,\cdot)\approx\tfrac P2\mathbf 1_{\{\abs{x}<t\}}$ is precisely the extremizer of the Cauchy--Schwarz inequality {$\abs{\int u(t,x)\dd x}\le\sqrt{2(t+1)}\,\norm{u(t,\cdot)}$}, combined with the conservation law \eqref{eq:massconservation}, as anticipated in Subsection~\ref{ss:mechanism}.
\end{example}

\begin{example}[Gaussian velocity in $\R$]\label{ex:1dgauss}
Let $N=1$, $u_0\equiv0$ and $u_1(x)=e^{-x^2}$, so that $\hat u_1(\xi)=\sqrt\pi\,e^{-\xi^2/4}$ and $P=\sqrt\pi$. Then {$u_1\in L^{1,1}(\R)$}, and the identity $\int_\R\xi^{-2}\bigl(e^{-a\xi^2}-1\bigr)\dd\xi=-2\sqrt{\pi a}$ {(obtained by one integration by parts)} with $a=1/2$ gives
\[
\mathcal C_1=\frac{1}{4\pi}\int_\R\frac{\pi e^{-\xi^2/2}-\pi}{\xi^2}\dd\xi
=\frac{1}{4}\bigl(-2\sqrt{\pi/2}\bigr)=-\frac{\sqrt{2\pi}}{4}.
\]
Consequently,
\[
\norm{u(t,\cdot)}^2_{L^2(\R)}=\frac{\pi}{2}\,t-\frac{\sqrt{2\pi}}{4}+o(1)
\approx \frac\pi2 t-0.626657\ldots
\]
\end{example}

\begin{example}[Poisson velocity in $\R$: $L^1$ but not $L^{1,1}$]
\label{ex:cauchy}
Let $N=1$, $u_0\equiv0$ and $u_1(x)=(1+x^2)^{-1}$, so that $\hat u_1(\xi)=\pi e^{-\abs{\xi}}$ and $P=\pi$. Here $u_1\in L^1\cap L^2$ and {$u_1\in L^{1,\alpha}(\R)$ for every $\alpha<1$}, but $u_1\notin L^{1,1}(\R)$. Hence {neither} the lower bound in \cite[Theorem~1.1]{Ikehata2023} {nor} Theorem~\ref{th:secondorder1} applies, whereas Theorem~\ref{th:profiles} does. In fact, everything can be computed in closed form. Differentiating in $t$ and using $\int_0^\infty\xi^{-1}e^{-b\xi}\sin(2t\xi)\dd\xi=\arctan(2t/b)$ {with $b=2$}, we find, for all $t>0$,
\[
\norm{u(t,\cdot)}^2
=\pi\Bigl[t\arctan t-\tfrac12\log\bigl(1+t^2\bigr)\Bigr]
=\frac{\pi^2}{2}\,t-\pi\log t-\pi+O(t^{-2}).
\]
The first-order term is $\tfrac{P^2}{2}t=\tfrac{\pi^2}{2}t$, in agreement with Corollary~\ref{cor:L2}. The second-order term, however, is $-\pi\log t$, which is {not} a constant. {This shows that the moment hypothesis $u_1\in L^{1,1}(\R)$ in Theorem~\ref{th:secondorder1} cannot be replaced by $u_1\in L^{1,\alpha}(\R)$ for any $\alpha<1$ (see Remark~\ref{rem:moment-sharp}),} and that the family of possible second-order behaviours is richer than \eqref{eq:secondorder1} suggests. It also illustrates comment (b) of Subsection~\ref{ss:results}: {Theorem~\ref{th:profiles} gives the exact leading term for data outside the scope of the lower bound in \eqref{eq:ike1}.}
\end{example}

\begin{example}[Vanishing mass in $\R$: the convergent regime]
\label{ex:twopoisson}
Let $N=1$ and $u_0\equiv0$, fix $a>0$ with $a\ne1$, and take
\[
u_1(x)=\frac1\pi\frac{1}{1+x^2}-\frac1\pi\frac{a}{a^2+x^2},
\qquad\text{so that}\qquad
\hat u_1(\xi)=e^{-\abs{\xi}}-e^{-a\abs{\xi}},\qquad P=0 .
\]
Then $\abs{\hat u_1(\xi)}^2\xi^{-2}$ is bounded near the origin {and decays exponentially at infinity}, so that $u_1\in\dH^{-1}(\R)$ and Theorem~\ref{th:dichotomy} applies. {Note that $u_1(x)\sim\frac{1-a}{\pi}x^{-2}$ as $\abs x\to\infty$, so that $u_1\notin L^{1,1}(\R)$.} We use the identity
$$
\int_0^\infty y^{-2}\sum_jc_je^{-p_jy}\dd y=\sum_jc_jp_j\log p_j ,
$$
valid whenever $\sum_jc_j=\sum_jc_jp_j=0$ {(indeed, the function $G(p):=\int_0^\infty y^{-2}\bigl(e^{-py}-1+py\,\mathbf 1_{(0,1)}(y)\bigr)\dd y$ satisfies $G''(p)=1/p$ for $p>0$, so that $G(p)=p\log p+\alpha p+\beta$ for some constants $\alpha,\beta$, and the two conditions give $\sum_jc_je^{-p_jy}=\sum_jc_j\bigl(e^{-p_jy}-1+p_jy\mathbf 1_{(0,1)}(y)\bigr)$)}, with $(c_j)=(1,-2,1)$ and $(p_j)=(2,1+a,2a)$. We obtain
\[
\lim_{t\to\infty}\norm{u(t,\cdot)}^2
=\frac12\norm{u_1}^2_{\dH^{-1}}
=\frac{1}{\pi}\Bigl[\log2-(1+a)\log(1+a)+a\log(2a)\Bigr] .
\]
The bracket vanishes at $a=1$ (where $u_1\equiv0$) and is positive otherwise. For $a=2$, its value is {$0.169899\ldots$}, so that $\norm{u(t,\cdot)}^2\to0.0540805\ldots$ Note that this example is {not} covered by the lower bound in \eqref{eq:ike1}, which is vacuous when $P=0$.
\end{example}

\begin{example}[Gaussian velocity in $\R^2$: the logarithmic regime to second order]\label{ex:2dgauss}
Let $N=2$, $u_0\equiv0$ and $u_1(x)=e^{-\abs{x}^2}$, so that $\hat u_1(\xi)=\pi e^{-\abs{\xi}^2/4}$, $P=\pi$, and $g(r)=2\pi^3e^{-r^2/2}$ with $g(0^+)=2\pi^3=\sigma_1P^2$. Here $u_1\in L^{1,1}(\R^2)$, so that Theorem~\ref{th:secondorder2} applies. We use the classical evaluation
\[
\int_0^\infty\frac{e^{-r^2/2}-e^{-r}}{r}\dd r=\frac{\gamma+\log2}{2},
\]
which is obtained by truncating both integrals at $r=\varepsilon$ and using $E_1(\varepsilon)=-\gamma-\log\varepsilon+O(\varepsilon)$ {for the exponential integral $E_1(\varepsilon)=\int_\varepsilon^\infty e^{-r}r^{-1}\dd r$ (see \cite[5.1.11]{AS1964})}. Formula \eqref{eq:secondorder2d} then gives
\[
\norm{u(t,\cdot)}^2_{L^2(\R^2)}
=\frac{\pi}{4}\log t+\frac{\pi\bigl(3\log2+\gamma\bigr)}{8}+o(1)
\approx\frac\pi4\log t+{1.043267}\ldots
\]
The leading constant $P^2/(4\pi)=\pi/4$ agrees with Corollary~\ref{cor:L2} and with the heuristic \eqref{eq:heur2}. {This example shows, in a particular but representative case, the exact value of the bounded term that is left undetermined by Lemma~\ref{lem:log}.}
\end{example}

\begin{example}[Gaussian velocity in $\R^3$: boundedness in $L^2$, blowup in $\dH^{-1/2}$]\label{ex:3dgauss}
Let $N=3$, $u_0\equiv0$ and $u_1(x)=e^{-\abs{x}^2}$, so that $\hat u_1(\xi)=\pi^{3/2}e^{-\abs{\xi}^2/4}$ and $P=\pi^{3/2}$. Since $N=3$, we have $s_c=1/2$, and {Theorems~\ref{th:profiles} and~\ref{th:secondorder2}} give simultaneously
\[
\lim_{t\to\infty}\norm{u(t,\cdot)}^2_{L^2(\R^3)}
=\frac12\norm{u_1}^2_{\dH^{-1}}
=\frac{\pi^{3/2}}{4\sqrt2}=0.984350\ldots,
\]
\[
\norm{u(t,\cdot)}^2_{\dH^{-1/2}(\R^3)}=\frac{P^2}{4\pi^2}\log t+{\frac{\pi(3\log2+\gamma)}{8}+o(1)}
=\frac{\pi}{4}\log t+{1.043267\ldots+o(1)}.
\]
{(Here $g(r)=4\pi^4e^{-r^2/2}$, and the constant follows from \eqref{eq:C2} exactly as in Example~\ref{ex:2dgauss}.)} Thus the same solution is bounded in $L^2$ and unbounded in $\dH^{-1/2}$. The infinite-time blowup of \cite{Ikehata2023} has not disappeared in dimension three; it has simply moved to a weaker norm. Compare with Remark~\ref{rem:higher-d}.
\end{example}

\section{Application to local energy decay in two dimensions}\label{sec:LED}

We briefly recall the mechanism, following \cite{Morawetz1961,IkeharaNishihara2005,CharaoIkehata2020,Ikehata2023}. For sufficiently regular real-valued solutions of \eqref{eq:wave}--\eqref{eq:data}, the Morawetz identity in the whole space reads
\begin{equation}\label{eq:morawetz}
tE(t)=\frac{N-1}{2}\int_{\R^N}u_1u_0\dd x
+\int_{\R^N}u_1(x\cdot\nabla u_0)\dd x
-\frac{N-1}{2}F(t)
-\int_{\R^N}u_t(t,x)\,(x\cdot\nabla u(t,x))\dd x,
\end{equation}
with
\[
F(t):=\int_{\R^N}u_t(t,x)u(t,x)\dd x .
\]
{For smooth compactly supported data, \eqref{eq:morawetz} follows by differentiating in time the quantity $\int_{\R^N}u_t(x\cdot\nabla u)\dd x+\frac{N-1}{2}F(t)$ and using the equation together with the identities $\int_{\R^N}\Delta u\,(x\cdot\nabla u)\dd x=\frac{N-2}{2}\norm{\nabla u}^2$ and $\int_{\R^N}u_t\,(x\cdot\nabla u_t)\dd x=-\frac N2\norm{u_t}^2$. One finds that its derivative equals $-E(t)=-E(0)$.} By the Cauchy--Schwarz inequality and the conservation of energy, we have
\begin{equation}\label{eq:F-bound}
\abs{F(t)}\le\norm{u(t,\cdot)}\,\norm{u_t(t,\cdot)}
\le\sqrt{2E(0)}\ \norm{u(t,\cdot)}.
\end{equation}
Hence the whole issue is the control of $M(t)=\norm{u(t,\cdot)}$, which is precisely the object of Theorem~\ref{th:profiles}. For $N=2$, the following midway estimate holds.

\begin{lemma}\label{lem:midway}
Let $N=2$, and let $[u_0,u_1]\in H^1(\R^2)\times L^2(\R^2)$ satisfy
$$
\int_{\R^2} |x| \left( |u_1(x)|^2+|\nabla u_0(x)|^2 \right) \,\dd x < \infty.
$$
Then, for every $R>0$ and every $t>R$,
\begin{equation}\label{eq:midway}
(t-R)E_R(t) \leq K_0+\frac12|F(t)|,
\end{equation}
where $E_R$ is defined in \eqref{eq:localenergy} and
\begin{equation}\label{eq:K0}
K_0 := \int_{\R^2} u_1(x) \bigl(x\cdot\nabla u_0(x)\bigr) \,\dd x + \frac12 \int_{\R^2}u_1(x)u_0(x)\,\dd x + \frac12\int_{\R^2}\abs{x}\left(\abs{u_1(x)}^2+\abs{\nabla u_0(x)}^2\right)\dd x.
\end{equation}
\end{lemma}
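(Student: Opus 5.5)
The plan is to combine the Morawetz identity \eqref{eq:morawetz} with $N=2$ with a weighted energy inequality outside the light cone, after first reducing to smooth compactly supported data. Throughout, write $e(t,x):=\frac12\bigl(\abs{u_t(t,x)}^2+\abs{\nabla u(t,x)}^2\bigr)$ for the energy density, so that $E(t)=\int_{\R^2}e(t,x)\dd x$ and $E_R(t)=\int_{B_R}e(t,x)\dd x$.

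\emph{Step 1: Morawetz bound.} For smooth compactly supported data, \eqref{eq:morawetz} with $N=2$ reads
\[
tE(t)=\tfrac12\int u_1u_0\dd x+\int u_1(x\cdot\nabla u_0)\dd x-\tfrac12F(t)-\int u_t\,(x\cdot\nabla u)\dd x .
\]
The pointwise inequality $\abs{u_t}\abs{\nabla u}\le e$ gives $\bigl|\int u_t(x\cdot\nabla u)\dd x\bigr|\le\int\abs{x}\,e(t,x)\dd x$. Hence
\[
\int_{\R^2}\bigl(t-\abs{x}\bigr)e(t,x)\dd x\ \le\ \tfrac12\int u_1u_0\dd x+\int u_1(x\cdot\nabla u_0)\dd x+\tfrac12\abs{F(t)} .
\]

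\emph{Step 2: splitting the weight.} Fix $t>R$ and split $\R^2$ into $B_R$, the annulus $R<\abs{x}\le t$, and the exterior $\abs{x}>t$.
\begin{itemize}
\item On $B_R$ we have $t-\abs{x}\ge t-R$, so this region contributes at least $(t-R)E_R(t)$.
\item On the annulus the weight is nonnegative, so this contribution can be dropped.
\item On $\abs{x}>t$ the weight is negative, and this is the only real difficulty.
\end{itemize}
Combining the three regions gives
\[
(t-R)E_R(t)\ \le\ \int_{\R^2}(t-\abs{x})\,e\,\dd x+\int_{\R^2}(\abs{x}-t)_+\,e(t,x)\dd x .
\]

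\emph{Step 3: the exterior term.} I will show that $\Phi(t):=\int(\abs{x}-t)_+e(t,x)\dd x$ is nonincreasing. This is the step I expect to be the main obstacle, although it is a standard flux computation. Use the local conservation law $\partial_te=\operatorname{div}(u_t\nabla u)$ and put $\varphi(t,x)=(\abs{x}-t)_+$, so that $\partial_t\varphi=-\mathbf 1_{\{\abs{x}>t\}}$ and $\abs{\nabla\varphi}=\mathbf 1_{\{\abs{x}>t\}}$ almost everywhere. Integrating by parts (the data are compactly supported and $\varphi$ is Lipschitz) gives
\[
\Phi'(t)=\int_{\{\abs{x}>t\}}\bigl(-e-\nabla\varphi\cdot u_t\nabla u\bigr)\dd x\le\int_{\{\abs{x}>t\}}\bigl(-e+\abs{u_t}\abs{\nabla u}\bigr)\dd x\le0 .
\]
Therefore $\Phi(t)\le\Phi(0)=\frac12\int\abs{x}\bigl(\abs{u_1}^2+\abs{\nabla u_0}^2\bigr)\dd x$. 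Together with Steps 1 and 2, this gives exactly \eqref{eq:midway} with $K_0$ as in \eqref{eq:K0}.

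\emph{Step 4: density.} Approximate general data $[u_0,u_1]$ by smooth compactly supported data. The approximation must hold in $H^1\times L^2$ and also in the weighted norm $\int\abs{x}\bigl(\abs{u_1}^2+\abs{\nabla u_0}^2\bigr)\dd x$; this can be done by cut-off and mollification. We also need $\int u_1(x\cdot\nabla u_0)\dd x$ to converge, and it is controlled by the weighted norms through $\abs{x}\abs{u_1}\abs{\nabla u_0}$. Indeed, Cauchy--Schwarz gives $\int\abs{x}\abs{u_1}\abs{\nabla u_0}\dd x\le\bigl(\int\abs{x}\abs{u_1}^2\bigr)^{1/2}\bigl(\int\abs{x}\abs{\nabla u_0}^2\bigr)^{1/2}$.

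For fixed $t$, the left side $E_R(t)$ and the quantity $F(t)$ depend continuously on the data in $H^1\times L^2$. This continuity uses $u(t)\in L^2$ with $\norm{u(t)}\le\norm{u_0}+t\norm{u_1}$. Passing to the limit then yields the lemma in full generality.
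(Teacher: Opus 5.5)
Your proposal is correct and follows the paper's proof essentially step for step. Both arguments use the Morawetz identity with the pointwise bound $\abs{u_t\,(x\cdot\nabla u)}\le\abs{x}\,e$, split the weighted integral over $B_R$, $\{R<\abs{x}<t\}$ and $\{\abs{x}\ge t\}$, show that $\Phi(t)=\int(\abs{x}-t)_+e\,\dd x$ is nonincreasing via $\partial_te=\operatorname{div}(u_t\nabla u)$, and use the same density argument; the only difference is that the paper reduces to $C_c^\infty$ data at the start rather than at the end.
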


\begin{proof}
All the terms in \eqref{eq:midway} depend continuously on the data for the norm $\norm{u_0}_{H^1}+\norm{u_1}+\bigl(\int_{\R^2}\abs x(\abs{u_1}^2+\abs{\nabla u_0}^2)\dd x\bigr)^{1/2}$, for each fixed $t$. By a standard density argument, we may therefore assume that $u_0,u_1\in C_c^\infty(\R^2)$. Then $u$ is smooth and, by finite speed of propagation, $u(t,\cdot)$ has compact support for every $t\ge0$. We set $e(t,x):=\frac12\bigl(u_t(t,x)^2+\abs{\nabla u(t,x)}^2\bigr)$.

Since $E(t)=E(0)$, identity \eqref{eq:morawetz} with $N=2$ can be written as
$$
\int_{\R^2}\Bigl(t\,e(t,x)+u_t(t,x)\,\bigl(x\cdot\nabla u(t,x)\bigr)\Bigr)\dd x=J_0-\frac12F(t),
$$
where
$$
J_0:=\frac12\int_{\R^2}u_1u_0\dd x+\int_{\R^2}u_1(x\cdot\nabla u_0)\dd x .
$$
The pointwise bound $\abs{u_t\,(x\cdot\nabla u)}\le\abs x\abs{u_t}\abs{\nabla u}\le\abs{x}\,e$ yields
\begin{equation}\label{eq:weighted-step}
\int_{\R^2}\bigl(t-\abs{x}\bigr)e(t,x)\,\dd x\le J_0+\frac12\abs{F(t)} .
\end{equation}
We split the integral on the left-hand side into the regions $\{\abs x\le R\}$, $\{R<\abs x<t\}$ and $\{\abs x\ge t\}$. The integrand is at least $(t-R)e$ on the first region and is nonnegative on the second one. We deduce that the left-hand side of \eqref{eq:weighted-step} is bounded from below by $(t-R)E_R(t)-\Phi(t)$, where
$$
\Phi(t):=\int_{\R^2}\bigl(\abs{x}-t\bigr)_+\,e(t,x)\,\dd x .
$$
We claim that $\Phi$ is nonincreasing. Indeed, $\partial_te=\operatorname{div}(u_t\nabla u)$, and the function $\psi(t,x):=(\abs x-t)_+$ is Lipschitz, with $\partial_t\psi=-\mathbf 1_{\{\abs x>t\}}$ and $\nabla\psi=\frac{x}{\abs x}\mathbf 1_{\{\abs x>t\}}$ almost everywhere. Hence, for almost every $t$,
$$
\Phi'(t)=-\int_{\{\abs x>t\}}e\,\dd x-\int_{\{\abs x>t\}}u_t\,\frac{x}{\abs x}\cdot\nabla u\,\dd x\le-\int_{\{\abs x>t\}}\bigl(e-\abs{u_t}\abs{\nabla u}\bigr)\dd x\le0 .
$$
Consequently, $\Phi(t)\le\Phi(0)=\frac12\int_{\R^2}\abs x\bigl(\abs{u_1}^2+\abs{\nabla u_0}^2\bigr)\dd x$. Inserting this bound into \eqref{eq:weighted-step}, we obtain $(t-R)E_R(t)\le J_0+\Phi(0)+\frac12\abs{F(t)}=K_0+\frac12\abs{F(t)}$, which is \eqref{eq:midway}.
\end{proof}

\begin{remark}\rm\label{rem:midway-ikehata}
Lemma~\ref{lem:midway} is a variant of the midway estimate \cite[Proposition~4.1]{Ikehata2023}, which relies on \cite[(2.13)]{IkeharaNishihara2005} and \cite[(3.5)]{CharaoIkehata2020}. In \cite{Ikehata2023}, the local energy is defined without the factor $\frac12$, and the constant of the midway estimate is
$$
\int_{\R^2}u_1(x\cdot\nabla u_0)\dd x+\frac12\int_{\R^2}u_1u_0\dd x+E(0),
$$
so that the total energy $E(0)$ appears in place of the weighted energy in \eqref{eq:K0}. The weighted energy cannot be dispensed with. Indeed, replace the data by $u_0(x/\lambda)$ and $\lambda^{-1}u_1(x/\lambda)$, and $(R,t)$ by $(\lambda R,\lambda t)$, with $\lambda>0$. Then $(t-R)E_R(t)$, $F(t)$, $J_0$ and the weighted energy are all multiplied by $\lambda$, whereas $E(0)$ is unchanged. Hence, if an estimate of the form $(t-R)E_R(t)\le J_0+cE(0)+\frac12\abs{F(t)}$, with a constant $c\ge0$, held for all data, then letting $\lambda\to\infty$ would give $(t-R)E_R(t)\le J_0+\frac12\abs{F(t)}$ for all $t>R>0$. Taking $t=2R$ and letting $R\to0$, we would infer that $0\le J_0+\frac12\abs{F(0)}$. This fails for $u_0=\varphi$ and $u_1=-\bigl(x\cdot\nabla\varphi+\frac12\varphi\bigr)$ with $\varphi(x)=e^{-\abs x^2}$. For these data, we have $J_0=-\norm{x\cdot\nabla\varphi+\frac12\varphi}^2=-\frac{5\pi}{8}$ and $F(0)=\frac12\norm{\varphi}^2=\frac{\pi}{4}$, and hence $J_0+\frac12\abs{F(0)}=-\frac{\pi}{2}<0$. This argument does not depend on the normalization of $E_R$. It does not affect the decay rate $t^{-1}\sqrt{\log t}$ in \cite[Theorem~4.1]{Ikehata2023} either, since only the value of the constant changes. Here, however, the precise value of $K_0$ matters, because it enters \eqref{eq:LEDbounded}. This is why we have included the short proof of Lemma~\ref{lem:midway}.
\end{remark}

\begin{proof}[Proof of Theorem \ref{th:LED}]
{By assumption, $u_0\in L^2(\R^2)$ and $u_1\in L^1(\R^2)\cap L^2(\R^2)$.} Therefore, Corollary~\ref{cor:L2} gives
\[
\norm{u(t,\cdot)}=\frac{\abs{P}}{2\sqrt\pi}\sqrt{\log t}+o\bigl(\sqrt{\log t}\bigr),
\qquad t\to\infty ,
\]
where the right-hand side is understood as $o(\sqrt{\log t})$ when $P=0$. Combining this with \eqref{eq:F-bound} and {Lemma~\ref{lem:midway}}, we get
\[
E_R(t)\le\frac{K_0}{t-R}
+\frac{\sqrt{2E(0)}\,\abs{P}}{4\sqrt{\pi}}\cdot
\frac{\sqrt{\log t}}{t-R}+o\left(\frac{\sqrt{\log t}}{t-R}\right) .
\]
We multiply by $t/\sqrt{\log t}$ and use $t/(t-R)\to1$ and $1/\sqrt{\log t}\to0$. Since $\sqrt2/(4\sqrt\pi)=1/(2\sqrt{2\pi})$, we obtain \eqref{eq:LEDsharp}.

If, in addition, $u_1\in\dH^{-1}(\R^2)$, Theorem~\ref{th:dichotomy} gives
$$
\norm{u(t,\cdot)}\longrightarrow\Bigl[\tfrac12\bigl(\norm{u_1}^2_{\dH^{-1}}+\norm{u_0}^2\bigr)\Bigr]^{1/2}.
$$
Using again \eqref{eq:F-bound} and \eqref{eq:midway}, we deduce that $E_R(t)=O(1/t)$, and then, since $t/(t-R)\to1$,
$$
\limsup_{t\to\infty}tE_R(t) \leq K_0 + \frac{\sqrt{2E(0)}}{2}\cdot\frac{1}{\sqrt2} \left( \norm{u_1}_{\dH^{-1}}^2+\norm{u_0}^2 \right)^{1/2},
$$
which is \eqref{eq:LEDbounded}. Finally, let $u_1\in L^{1,1}(\R^2)$ with $P=0$. Then
$$
|\widehat u_1(\xi)| = |\widehat u_1(\xi)-\widehat u_1(0)| \leq |\xi| \int_{\R^2}|x||u_1(x)|\,\dd x,
$$
so that $|\xi|^{-2}|\widehat u_1(\xi)|^2$ is bounded on $\{|\xi|\leq1\}$. On $\{|\xi|\geq1\}$, we have $|\xi|^{-2}|\widehat u_1(\xi)|^2 \leq |\widehat u_1(\xi)|^2$, which is integrable by Plancherel's theorem. Hence $u_1\in\dH^{-1}(\R^2)$, and \eqref{eq:LEDbounded} applies.
\end{proof}

\begin{remark}\rm
Estimate \eqref{eq:LEDsharp} refines \cite[Theorem~4.1]{Ikehata2023} within the same multiplier framework. The leading constant is explicit and depends only on $|P|\sqrt{E(0)}$, rather than on the whole quantity $I_{0,2}$. This constant is sharp at the level of the argument, since it is obtained by inserting the exact leading asymptotics $\norm{u(t,\cdot)} = \frac{|P|}{2\sqrt{\pi}}\sqrt{\log t} + o\bigl(\sqrt{\log t}\bigr)$ into the midway estimate \eqref{eq:midway}. Estimate \eqref{eq:LEDbounded} further shows that the logarithmic loss disappears whenever $u_1\in\dH^{-1}(\R^2)$. This condition is more flexible than moment conditions of the type $u_1\in L^{1,1}(\R^2)$, $P=0$, which appear in earlier works such as \cite{CharaoIkehata2020,Ikehata2019}. Indeed, the latter implies $u_1\in\dH^{-1}(\R^2)$, whereas membership in $\dH^{-1}(\R^2)$ does not require a finite first absolute moment.
{For instance, for $a>0$ with $a\neq1$, the two-dimensional analogue of Example~\ref{ex:twopoisson},
$$
u_1(x)=\frac1{2\pi}\Bigl[(1+\abs x^2)^{-3/2}-a\,(a^2+\abs x^2)^{-3/2}\Bigr],
\qquad
\widehat u_1(\xi)=e^{-\abs\xi}-e^{-a\abs\xi},
$$
belongs to $\dH^{-1}(\R^2)$, satisfies $\int_{\R^2}\abs x\abs{u_1}^2\dd x<\infty$, and has $P=0$. However, $u_1(x)\sim\frac{1-a}{2\pi}\abs x^{-3}$ as $\abs x\to\infty$, so that $u_1\notin L^{1,1}(\R^2)$.}
The present argument does not decide whether $t^{-1}\sqrt{\log t}$ is the optimal decay rate of the local energy $E_R(t)$ itself. It only proves that this is the rate produced by the multiplier estimate combined with the sharp asymptotics of $\norm{u(t,\cdot)}$. The optimality of the rate for $E_R(t)$ remains open{, and, as R.~Ikehata pointed out to us, it seems to be a difficult question}; see Problem~\ref{pb:LEDsharp}.
\end{remark}

\section{Further remarks and future directions} \label{sec:open}

In this section, we discuss further applications of our method and several directions for future research.

\begin{problem}[Second order without moments]\label{pb:nomoments}
Example~\ref{ex:cauchy} shows that, when $u_1\in L^1\setminus L^{1,1}$, the second-order term in dimension one need not be constant. For the Poisson kernel, it equals $-\pi\log t$. Describe the whole family of possible second-order terms. Concretely, suppose {(say with $u_0\equiv0$)} that $|\hat u_1(\xi)|^2-P^2\sim-c|\xi|^\beta$ as $\xi\to0$, where $0<\beta<2$ and $c>0$. For $0<\beta<1$, the scaling identity \eqref{eq:scaling} suggests that $\norm{u(t,\cdot)}^2=\frac{P^2}{2}t-c_\beta t^{1-\beta} +o(t^{1-\beta})$, with $c_\beta=\frac{c}{2\pi}\int_{\R}\sin^2|\eta|\,|\eta|^{\beta-2}\dd\eta$. At the borderline exponent $\beta=1$, one expects a logarithmic correction, which is exactly what Example~\ref{ex:cauchy} exhibits. For $1<\beta<2$, the function $\xi^{-2}(|\hat u_1(\xi)|^2-P^2)$ is locally integrable, so that the second-order term is constant, while a correction of order $t^{1-\beta}$ may appear at the next order. Making this trichotomy precise under minimal assumptions on the remainder, and computing all the corresponding constants, seems within reach of the present methods.
\end{problem}

\begin{problem}[Anomalous rates and regular variation]\label{pb:RV}
Theorem~\ref{th:profiles} describes the case $\hat u_1(0)=P\ne0$. Suppose instead that $P=0$ but $u_1\notin\dH^{-1}$, with a spherical average behaving regularly at the origin, say
\[
g(r)=\int_{\Sph^{N-1}}\abs{\hat u_1(r\omega)}^2\dd\omega
= r^{2\beta}\,\ell(1/r)\,\bigl(1+o(1)\bigr),\qquad r\to0^+,
\]
for some $\beta\ge0$ and some function $\ell$ slowly varying in the sense of Karamata {\cite{BGT1987}}. The scaling identity \eqref{eq:scaling} suggests that
\[
\norm{u(t,\cdot)}^2\ \sim\ c_{N,\beta}\ t^{\,2-N-2\beta}\,\ell(t),
\qquad 0\le\beta<\tfrac{2-N}{2},
\]
with an explicit constant $c_{N,\beta}$, and a logarithmic correction at the critical value $\beta=(2-N)/2$. {This range of $\beta$ is nonempty only for $N=1$; for $N=2$, only the critical value $\beta=0$ occurs.} In dimension one, this would produce the full continuum of rates $t^{\alpha}$, $0\le\alpha\le\tfrac12$, {up to slowly varying factors}, for $\norm{u(t,\cdot)}$. These rates interpolate between Theorem~\ref{th:dichotomy} and Corollary~\ref{cor:L2}, and they would show that Remark~\ref{rem:norate} is sharp. Making this rigorous, and characterizing the velocities $u_1$ that produce a prescribed rate, would give a complete Karamata-type classification of infinite-time $L^2$-blowup for the free wave equation. The main technical point is that $g$ is a spherical average, so that regular variation must be imposed on $g$ rather than on $\hat u_1$ itself. The one-dimensional case, where the sphere consists of two points, should be settled first.
\end{problem}

\begin{problem}[Optimality of the local energy rate]\label{pb:LEDsharp}
Is the rate $t^{-1}\sqrt{\log t}$ in \eqref{eq:LEDsharp} attained by some datum with $P\ne0$? Or is the genuine decay of $E_R$ faster, the factor $\sqrt{\log t}$ being an artifact of the multiplier method? A matching lower bound for $E_R$ in dimension two seems to be unknown. Note that \eqref{eq:midway} loses information twice, once in the Cauchy--Schwarz step \eqref{eq:F-bound} and once by discarding the oscillation of $F$. Since Theorem~\ref{th:profiles} now identifies $\norm{u(t,\cdot)}$ exactly at leading order, the remaining gap lies entirely in the correlation between $u$ and $u_t$, which is itself a natural object of study.
{The question concerns data that are not compactly supported. Indeed, let $u_0,u_1\in C_c^\infty(\R^2)$ vanish outside $\{\abs{x}\le a\}$, and let $\abs{x}\le R$ and $t>R+a$. Poisson's formula then reads $u=\partial_t(W*u_0)+W*u_1$, with
$$
W*f(t,x)=\frac1{2\pi}\int_{\R^2}\frac{f(z)}{\sqrt{t^2-\abs{x-z}^2}}\,\dd z,
$$
and the expansion $(t^2-\abs{x-z}^2)^{-1/2}=t^{-1}+O(t^{-3})$, which may be differentiated term by term, gives $u_t(t,x)=-\frac{P}{2\pi}t^{-2}+O(t^{-3})$ and $\nabla u(t,x)=O(t^{-3})$, uniformly in $\abs{x}\le R$. Hence $E_R(t)=\frac{P^2R^2}{8\pi}\,t^{-4}+O(t^{-5})$, which is much smaller than $t^{-1}\sqrt{\log t}$. The relevant class is therefore that of data decaying slowly near infinity, as in \cite{IkeharaNishihara2005,IkehataSobukawa2007}, and determining the optimal rate in the class of Theorem~\ref{th:LED} seems to be difficult.}
\end{problem}

\begin{problem}[Other models]\label{pb:models}
The proofs above use only the explicit multiplier in \eqref{eq:fourier-solution} and the continuity of $\hat u_1$ at $\xi=0$. They should therefore adapt to other dispersive models. For the Klein--Gordon equation $u_{tt}-\Delta u+m^2u=0$, the multiplier
$$
\frac{\sin\bigl(t\sqrt{\abs{\xi}^2+m^2}\bigr)}{\sqrt{\abs{\xi}^2+m^2}}
$$
is bounded. Boundedness of $M$ is then expected in all dimensions, with limit
$$
\frac12\Bigl(\norm{(m^2-\Delta)^{-1/2}u_1}^2+\norm{u_0}^2\Bigr).
$$ For the plate (beam) equation $u_{tt}+\Delta^2u=0$, the multiplier is $\sin(t\abs{\xi}^2)/\abs{\xi}^2$, so that the critical index becomes $s_c=(N-4)/2$ and the exceptional dimensions are $N\le4$. For the fractional wave equation $u_{tt}+(-\Delta)^\sigma u=0$, the critical index is $s_c=(N-2\sigma)/2$. Strongly damped wave equations, for which analogous questions were the original motivation in \cite{Ikehata2014,IkehataOnodera2017}, are also natural candidates. {Optimal growth rates of the $L^2$-norm, with unspecified constants, are already known for the plate equation \cite{Ikehata2024plate} and, in dimension one, for the fractional wave equation \cite{IkedaZhao2024}; they agree with the critical indices above.} Determining the exact constants in each of these models, in particular the analogue of $\kappa_{N,s}$, is a natural continuation of the present {paper}.
\end{problem}

\begin{problem}[Exterior domains]\label{pb:exterior}
Does Theorem~\ref{th:dichotomy} admit an analogue in an exterior domain with Dirichlet boundary conditions? {The $L^2$-behaviour of solutions in exterior domains was studied in \cite{IkehataMatsuyama2002}.} The difficulty is that the Fourier transform is no longer available, and the quantity $P$ is no longer conserved in the sense of \eqref{eq:massconservation}. A resolvent-based low-frequency expansion should replace \eqref{eq:fourier-solution}. One may expect the role of $\dH^{-1}(\R^N)$ to be played by the domain of the inverse square root of the Dirichlet Laplacian, or by an appropriate low-frequency range space. Determining the correct formulation is part of the problem. This is also the setting in which the local energy decay question of \cite{Morawetz1961} was originally posed.
\end{problem}

\section*{Acknowledgements}
{The authors are grateful to Professor Ryo Ikehata for reading a preliminary version of this manuscript and for his valuable comments, in particular on the historical context of the infinite-time $L^2$-growth problem, on the origin of the identity \eqref{eq:massconservation}, and on the optimality of the local energy decay rate.}


\end{document}